%15/3/2024

\documentclass[12pt]{amsart}
\usepackage{amsmath,amssymb,latexsym,soul,cite,amsthm,color,enumitem,graphicx,mathtools,microtype,tikz}
\usepackage[colorlinks=true,urlcolor=cobalt,citecolor=cobalt,linkcolor=cobalt,linktocpage,pdfpagelabels,bookmarksnumbered,bookmarksopen]{hyperref}
\definecolor{cobalt}{rgb}{0.0, 0.28, 0.67}
\usepackage[english]{babel}
\usepackage[left=2.7cm,right=2.7cm,top=2.4cm,bottom=2.5cm]{geometry}
\numberwithin{equation}{section}

\newtheorem{theorem}{Theorem}[section]
\theoremstyle{plain}
\newtheorem{lemma}[theorem]{Lemma}
\theoremstyle{plain}
\newtheorem{proposition}[theorem]{Proposition}
\theoremstyle{plain}

\theoremstyle{definition}

\newtheorem{example}[theorem]{Example}

\newcommand{\N}{{\mathbb N}}

\newcommand{\R}{{\mathbb R}}
\newcommand{\eps}{\varepsilon}
\newcommand{\beq}{\begin{equation}}
\newcommand{\eeq}{\end{equation}}
\renewcommand{\le}{\leqslant}
\renewcommand{\ge}{\geqslant}

\newcommand{\w}{W^{s,p}_0(\Omega)}
\newcommand{\fpl}{(-\Delta)_p^s\,}
\newcommand{\ds}{{\rm d}_\Omega^s}

\newenvironment{enumroman}{\begin{enumerate}

}{\end{enumerate}}

\def\Xint#1{\mathchoice
{\XXint\displaystyle\textstyle{#1}}%
{\XXint\textstyle\scriptstyle{#1}}%
{\XXint\scriptstyle\scriptscriptstyle{#1}}%
{\XXint\scriptscriptstyle\scriptscriptstyle{#1}}%
\!\int}
\def\XXint#1#2#3{{\setbox0=\hbox{$#1{#2#3}{\int}$ }
\vcenter{\hbox{$#2#3$ }}\kern-.6\wd0}}
\def\dashint{\Xint-}

\title[Singular fractional $p$-Laplacian]{Fine boundary regularity for the singular fractional $p$-Laplacian}

\author[A.\ Iannizzotto, S.\ Mosconi]{Antonio Iannizzotto, Sunra Mosconi}

\address[A.\ Iannizzotto]{Department of Mathematics and Computer Science
\newline\indent
University of Cagliari
\newline\indent
Via Ospedale 72, 09124 Cagliari, Italy}
\email{antonio.iannizzotto@unica.it}

\address[S.\ Mosconi]{Department of Mathematics and Computer Science
\newline\indent
University of Catania
\newline\indent
Viale A.\ Doria 6, 95125 Catania, Italy}
\email{sunra.mosconi@unict.it}

\subjclass[2010]{35D30, 35R11, 47G20}
\keywords{Fractional $p$-Laplacian, H\"older regularity, Boundary regularity}

\begin{document}

\begin{abstract}
We study the boundary weighted regularity of weak solutions $u$ to a $s$-fractional $p$-Laplacian equation in a bounded smooth domain $\Omega$ with bounded reaction and nonlocal Dirichlet type boundary condition, in the singular case $p\in(1,2)$ and with $s\in(0,1)$. We prove that $u/\ds$ has a $\alpha$-H\"older continuous extension to the closure of $\Omega$, ${\rm d}_\Omega(x)$ meaning the distance of $x$ from the complement of $\Omega$. This result corresponds to that of \cite{IMS1} for the degenerate case $p\ge 2$.
\end{abstract}

\maketitle

\begin{center}
\small
Version of \today\
\vskip4pt
\begin{minipage}{9cm}
\tableofcontents
\end{minipage}
\end{center}
	
\section{Introduction}\label{sec1}
 
\subsection{Main result}
In the present paper we study a form of fine boundary regularity for nonlinear, nonlocal elliptic equations of fractional order, coupled with a Dirichlet condition. Precisely, we consider the following nonlocal Dirichlet type problem:
\beq\label{dir}
\begin{cases}
\fpl u = f(x) & \text{in $\Omega$} \\
u = 0 & \text{in $\R^N\setminus\Omega$.}
\end{cases}
\eeq
Here $\Omega\subset\R^N$  is a bounded domain with a $C^{1,1}$-smooth boundary $\partial\Omega$, $p>1$, $s\in(0,1)$, and the leading operator is the $s$-fractional $p$-Laplacian, defined for any $u$ in the fractional Sobolev space $W^{s,p}(\R^N)$ as the gradient of the functional
\[u \mapsto \frac{1}{p}\iint_{\R^N\times\R^N}\frac{|u(x)-u(y)|^p}{|x-y|^{N+ps}}\,dx\,dy.\]
Also, the reaction is a function $f\in L^\infty(\Omega)$, and the Dirichlet condition prescribes vanishing of $u$ a.e.\ in $\R^N\setminus\Omega$. By classical variational arguments, problem \eqref{dir} admits a unique weak solution $u$ lying in a convenient fractional Sobolev space $W^{s,p}_0(\Omega)$ incorporating the Dirichlet condition. Such solution is H\"older continuous in $\overline\Omega$ (see \cite{IMS}) and nothing more in general, so we are interested in a form of fine (or weighted) H\"older regularity involving the distance function
\[{\rm d}_\Omega(x) = {\rm dist}(x,\R^N\setminus\Omega).\]
Our result is the following:

\begin{theorem}\label{main}
Let $p>1$, $s\in(0,1)$, $\Omega\subset\R^N$ be a bounded domain with a $C^{1,1}$-smooth boundary $\partial\Omega$. Then, there exist $\alpha\in(0,s)$, $C>0$, depending on $N$, $p$, $s$, and $\Omega$, with the following property: for all $f\in L^\infty(\Omega)$, if $u\in W^{s,p}_0(\Omega)$ is the weak solution of problem \eqref{dir}, then $u/\ds$ admits a $\alpha$-H\"older continuous extension to $\overline\Omega$ and it satisfies the uniform bound
\[\Big\|\frac{u}{\ds}\Big\|_{C^\alpha(\overline\Omega)} \le C\|f\|_{L^\infty(\Omega)}^\frac{1}{p-1}.\]
\end{theorem}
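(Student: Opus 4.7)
The plan is to follow the template developed in \cite{IMS1} for the degenerate case $p\ge2$ and to re-do each step in a way that remains valid in the singular regime $p\in(1,2)$. The overall strategy is classical in boundary regularity theory: combine a two-sided comparison against an explicit $s$-power barrier to prove that $u\asymp\ds$ near $\partial\Omega$, then run a Krylov/De Giorgi-type oscillation decay for the quotient $u/\ds$ on a sequence of shrinking half-balls centred at a boundary point.

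The first block of work is to set up the scaling-invariant machinery in the singular regime: weak and strong comparison for $\fpl$, the interior $C^\alpha$ estimate up to the boundary recalled from \cite{IMS}, and sharp pointwise values of $\fpl w$ when $w$ is the $s$-power barrier $(x_N)_+^s\eta(x)$, its radial analogue, or the torsion function on a ball. Via a standard localization one straightens $\partial\Omega$ by a $C^{1,1}$-diffeomorphism and controls the transported operator with error terms. With these tools and the scaling $u\mapsto \lambda u$, $f\mapsto\lambda^{p-1}f$, one obtains a boundary $L^\infty$ bound of the form
\[
c\,\|f\|_{L^\infty(\Omega)}^{1/(p-1)}\ds \le u \le C\,\|f\|_{L^\infty(\Omega)}^{1/(p-1)}\ds
\]
in a tubular neighbourhood of $\partial\Omega$; the exponent $1/(p-1)$ visible in the theorem is already dictated here by the homogeneity of $\fpl$.

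The heart of the proof is then the oscillation-decay step: for a solution vanishing on $\{x_N<0\}$ and satisfying the preliminary bound above, prove the existence of $\theta\in(0,1)$ and $\alpha\in(0,s)$ such that, for every small boundary ball $B_r$,
\[
\operatorname{osc}_{B_{r/2}\cap\Omega}\frac{u}{\ds} \le \theta\,\operatorname{osc}_{B_r\cap\Omega}\frac{u}{\ds}+ C\,r^\alpha\,\|f\|_{L^\infty(\Omega)}^{1/(p-1)}.
\]
Iterating dyadically and patching with interior regularity via a Campanato-type characterization yields the $C^\alpha$ extension of $u/\ds$ to $\overline\Omega$. The iteration itself is driven by a dichotomy: on the half-ball $B_r\cap\Omega$ either the set where $u/\ds$ is close to $\sup u/\ds$ has positive nonlocal capacity, or the symmetric statement for the infimum holds; in either case one uses the comparison function $\mu\,\ds$ together with a nonlocal weak Harnack to improve the oscillation.

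The main obstacle, and the step I expect to absorb most of the technical effort, is exactly this comparison with $\mu\,\ds$ in the singular range. In \cite{IMS1} one exploits the convexity of $|t|^{p-2}t$ for $p\ge2$ to write $\fpl(u-\mu\,\ds)$ as essentially $\fpl u - \mu^{p-1}\fpl\ds$ plus a controlled remainder, but for $p\in(1,2)$ the function $|t|^{p-2}t$ is only concave in $|t|$ and its difference quotients have a singularity at the origin. One is therefore forced to establish new pointwise nonlinear algebraic inequalities quantifying the defect between $|a-b|^{p-2}(a-b)-|a|^{p-2}a+|b|^{p-2}b$ and suitable multiples of $|a|^{p-1}+|b|^{p-1}$, and to combine them with a fine logarithmic estimate in the spirit of Di~Castro--Kuusi--Palatucci adapted to the weighted function $u/\ds$. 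Once this singular-case comparison is available, the alternative and its iteration run in parallel to \cite{IMS1}, and a covering argument together with the interior estimate of \cite{IMS} closes the proof of Theorem~\ref{main}.
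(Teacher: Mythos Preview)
Your outline has a real gap in two places. First, the preliminary two-sided bound $c\|f\|_{L^\infty}^{1/(p-1)}\ds\le u\le C\|f\|_{L^\infty}^{1/(p-1)}\ds$ is simply false: $u$ need not have a sign, so only the upper bound $|u|\le C\ds$ is available. The entire iteration must therefore be set up with moving two-sided levels $m_n\ds\le u\le M_n\ds$ in $D_{R_n}$ where $m_n,M_n$ can have any sign, and this is not a cosmetic change. Second, and more seriously, you locate the singular difficulty in a pointwise comparison of $\fpl(u-\mu\ds)$ with $\fpl u-\mu^{p-1}\fpl\ds$ and then claim that ``the alternative and its iteration run in parallel to \cite{IMS1}''. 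They do not. The mechanism in \cite{IMS1} and here is a nonlocal excess $L(u,m,R)=\bigl(\dashint_{\tilde B_R}|u/\ds-m|^{p-1}\bigr)^{1/(p-1)}$, computed on a ball \emph{disjoint} from $D_R$, together with weak Harnack inequalities whose additive error has the very specific shape $C(KR^s)^{1/(p-1)}+C(m+Hm^{2-p})R^s$; the term $m^{2-p}$ blows up as $m\to 0$ when $p<2$ and is the reason the singular iteration is genuinely harder. In the oscillation argument one must truncate $u$ globally to enforce $m_n\ds\le\tilde u_n\le M_n\ds$ on all of $\R^N$, and the truncation introduces extra reaction terms $E_\pm(u,m_n,R_n)$ in $\fpl\tilde u_n$. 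For $p\ge 2$ these are handled by subadditivity of $t\mapsto t^{p-1}$; for $p<2$ that fails, and the paper needs a new argument based on the co-area formula applied to the level sets $\{\mathrm d_\Omega=\xi\}$ to obtain the crucial improved estimate $E_\pm\le C\mu\,|m_n|^{p-2}S_1(\alpha)R_n^{\alpha-s}$ (as opposed to the cruder $R_n^{(p-1)\alpha-s}$), without which the induction does not close. Neither the excess quantity, nor the precise form of the Harnack constants, nor the co-area step appears in your plan, and a Di~Castro--Kuusi--Palatucci logarithmic estimate does not substitute for them.
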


\subsection{Related results}
In order to fully understand the meaning of Theorem \ref{main}, we will now draw a brief {\em r\'esum\'e} of some relevant regularity results for nonlocal elliptic operators. First, let us consider the equation
\beq\label{leq}
Lu = f(x) \quad \text{in $\Omega$,}
\eeq
where $L$ denotes a {\em linear} elliptic operator with fractional order of differentiation $2s$ ($s\in(0,1)$), including the model case of the fractional Laplacian $L=(-\Delta)^s$. Regularity of the solutions of \eqref{leq} is well understood. In the model case $L=(-\Delta)^s$, Schauder estimates follow from standard potential theory and ensure:
\begin{itemize}[leftmargin=1cm]
\item $u\in C^{2s+\alpha}(\Omega)$ as long as $f\in C^\alpha(\Omega)$ and $2s+\alpha\notin\N$;
\item  $u\in C^{2s}(\Omega)$ if $f\in L^\infty(\Omega)$ (except for $s=1/2$, in which case $u\in C^{2s-\eps}(\Omega)$ $\forall\eps>0$).
 \end{itemize}
A similar result holds for far more general fractional linear operators $L$ which are {\em translation invariant}, meaning $Lu (\cdot+z)=Lu$, arising as infinitesimal generators of $2s$-stable L\'evy processess (see  \cite{BCI,DRSV,ROS2} and also \cite{MY,FRO} for the regional fractional Laplacian, corresponding to a censored L\'evy process). When the linear fractional operator $L$ is not translation invariant due to the presence of coefficients, one has a corresponding notion of divergence versus non-divergence form of the equation.  If no assumption is made on the coefficients beyond boundedness and measurability, the best one can expect is $\alpha$-H\"older regularity in the interior with a small, not explicit $\alpha$ (see \cite{DK,K,M} for the divergence case and \cite{S1} for the non-divergence case). When the coefficients are assumed to be $\alpha$-H\"older continuous, the initial Schauder type interior regularity statements holds true in the non-divergence case, see \cite{B,F,FRO1}, and have to be naturally modified for divergence form operators \cite{FRO1}.
\vskip2pt
\noindent
Now let us couple equation \eqref{leq} with a nonlocal, homogeneous Dirichlet condition:
\beq\label{fed}
\begin{cases}
Lu=f(x) & \text{in $\Omega$} \\
u= 0 & \text{in $\R^N\setminus\Omega$,}
\end{cases}
\eeq
where, in the following discussion, $\partial\Omega$ is assumed to be smooth. The regularity up to the boundary for problem \eqref{fed} differs substantially from the interior one, as is clear observing that the function $u(x)= (x_+)^s$ solves $(-\Delta)^s u=0$ in $(0,\infty)$. For the fractional Laplacian the optimal global regularity is $u\in C^s(\R^N)$ and the same holds true for the translation invariant fractional operators discussed above. One is then led to study the {\em fine boundary regularity} of $u$, i.e., the boundary regularity of $u/\ds$. If $f\in L^\infty$ in \eqref{fed}, then $u/\ds\in C^{s-\eps}(\overline{\Omega})$ for $\eps>0$ arbitrarily small (see \cite{ROS2}). For more regular $f$, the corresponding Schauder theory is developed in \cite{ARO,AG}. Fine boundary regularity for different classes of linear elliptic fractional operators involve  $u/{\rm d}_{\Omega}^\beta$ for $\beta\neq s$, see for instance  \cite{F1,FRO,DRSV}. 
\vskip2pt
\noindent
Fully nonlinear, uniformly elliptic operators of fractional order have been studied in the pioneering papers \cite{CS,CS1}, and the corresponding interior Schauder theory has reached substantially optimal results, see \cite{K2,S}. The boundary regularity in the fully nonlinear case also  parallels the linear one,  with some technical restrictions, see \cite{ROS1}. For more precise statements and wider bibliographic references, we refer to \cite{FRO2}.
\vskip2pt
\noindent
The picture for degenerate and singular fractional operators such as $(-\Delta)_p^s$ for $p\ne 2$ is less clear. On one hand, there are many available definitions of what may be considered a fractional version of the $p$-Laplacian. While the Gagliardo semi-norm
\[[u]_{s, p}:=\left(\iint_{\R^N\times\R^N}\frac{|u(x)-u(y)|^p}{|x-y|^{N+ps}}\,dx\,dy\right)^{\frac{1}{p}} \]
has a long history as an object of interest in the theory of Besov spaces, the operator $\fpl$, defined as the differential of $u\mapsto [u]_{s, p}^p/p$, has been considered for the first time in \cite{IN} as an approximation of the standard $p$-Laplacian for $s\to 1^-$. Other definitions of fractional $p$-Laplacians are proposed and studied in \cite{BCF, BCF1,CJ,L2} and are more related to the viscosity framework. In the variational framework (which is the one we adopt here) the most closely related operator is the so-called $H^{s, p}$-fractional Laplacian introduced in \cite{SS,SS1}.  
\vskip2pt
\noindent
Here, we are interested in the operator $(-\Delta)_p^s$ defined as above and the related equation
\beq\label{neq}
\fpl u = f(x) \quad \text{in $\Omega$.}
\eeq
For such an equation, interior H\"older regularity  has been established for the first time in \cite{DKP,DKP1} for the homogeneous case, and in \cite{KMS} for the non-homogeneous case. These results also cover variants of the operator having bounded measurable coefficients, and in this setting the most recent developments are achieved through a new class of fractional De Giorgi classes introduced in \cite{C} and exhibiting a purely nonlocal regularising effect. In these latter works the H\"older exponent obtained is unexplicitly small, but for the model case of the fractional $p$-Laplacian considered here a precise, and in many cases optimal, H\"older exponent can be derived. Indeed, in \cite{BLS} (see also \cite{DN}) it was proved that if $u$ solves \eqref{neq} with $f\in L^\infty(\Omega)$, then $u\in C^\alpha_{\rm loc}(\Omega)$ for any
\[0 < \alpha < \min\Big\{\frac{ps}{p-1},\,1\Big\}.\]
The same result has recently been extended to the singular case $p\in (1, 2)$ in \cite{GL}. {\em Global} regularity (i.e., up to the boundary) for the Dirichlet problem \eqref{dir}, still with a small H\"older exponent, is the subject of \cite{IMS,KKP}. Combining this regularity with the results of \cite{BLS,GL} ensures that if  $f\in L^\infty(\Omega)$, then $u\in C^s(\R^N)$ (which is optimal also for the fractional Laplacian), see Theorem \ref{opt}. It therefore makes sense to study the {\em fine}, or {\em weighted}, boundary regularity of solutions to \eqref{dir}, in the sense of \cite{ROS}. In \cite{IMS1} it has been proved that indeed, if $p\ge 2$ and $f\in L^\infty(\Omega)$, then $u/\ds$ admits a H\"older continuous extension to $\overline\Omega$ (and hence to $\R^N$), with an undetermined small H\"older exponent and a uniform estimate of the H\"older norm of such extension. Our aim in this paper is to prove an analogous result for the {\em singular} case $p\in(1,2)$, which is precisely Theorem \ref{main}.

\subsection{Motivations and applications}
The motivation for considering this type of weighted boundary regularity is the following. Given the possibly singular behavior of $u$ near $\partial\Omega$, H\"older continuity of $u/\ds$ is the natural fractional counterpart of global $C^{1,\alpha}$-regularity for the classical $p$-Laplace equation, obtained under very general conditions in \cite{L1}. The analogy is intuitive as soon as we consider the fractional order derivative at a point $x\in\partial\Omega$ along the inner normal direction $\nu$
\[\frac{\partial u}{\partial\nu^s}(x) = \lim_{t\to 0_+}\frac{u(x+t\nu)}{t^s} \sim \lim_{y\to x}\frac{u(y)}{\ds(y)}.\]
\vskip2pt
\noindent
The applications of Theorem \ref{main}, similar to those of the result of \cite{L1} in the local case, are mostly related to the following generalization of problem \eqref{dir}:
\beq\label{non}
\begin{cases}
\fpl u = f(x,u) & \text{in $\Omega$} \\
u = 0 & \text{in $\R^N\setminus\Omega$,}
\end{cases}
\eeq
where $f:\Omega\times\R\to\R$ is a Carath\'eodory mapping subject to a subcritical or critical growth condition (see \cite{ILPS} for a detailed functional-analytic framework for such problems). For all $\alpha\in[0,1]$ define the weighted H\"older space
\[C^\alpha_s(\overline\Omega) = \Big\{u\in C^0(\overline\Omega):\,\frac{u}{\ds} \ \text{has a $\alpha$-H\"older continuous extension to $\overline\Omega$}\Big\}.\]
Clearly, $C^\alpha_s(\overline\Omega)$ is compactly embedded into $C^0_s(\overline\Omega)$ for all $\alpha>0$. This, in conjunction with the uniform estimate of Theorem \ref{main} and the {\em a priori} bound of \cite{CMS}, gives rise for instance to the following interesting applications:
\begin{itemize}[leftmargin=1cm]
\item[$(a)$] {\em Sobolev vs.\ H\"older minima.} Using Theorem \ref{main}, it can be seen that the local minimizers of the energy functional corresponding to \eqref{non} in the Sobolev space $W^{s,p}_0(\Omega)$ and in $C^0_s(\overline\Omega)$, respectively, coincide. This is a valuable information in nonlinear analysis, when aiming at multiplicity results via variational methods (see \cite{IMS2} for the case $p\ge 2$, while the case $p\in(1,2)$ will be considered in a forthcoming paper).
\item[$(b)$] {\em Strong minimum/comparison principles}. In \cite{IMP} some general minimum and comparison results have been proved for sub-supersolutions of fractional $p$-Laplacian problems, for instance we recall the following Hopf-type lemma: under very general conditions on $f$, for any solution $u$ of problem \eqref{non} we have
\[\inf_\Omega\frac{u}{\ds} > 0.\]
By Theorem \ref{main}, the above information rephrases in the topological form $u\in {\rm int}(C^0_s(\overline\Omega)_+)$, which again can be used in several existence and multiplicity results.
\item[$(c)$] {\em Extremal solutions in an interval.} Let $\underline u\le\overline u$ be a sub-supersolution pair for \eqref{non}. Then, using Theorem \ref{main} it can be seen that the set of all solutions $u$ s.t.\ $\underline u\le u\le\overline u$ in $\Omega$ is nonempty, compact in both $W^{s,p}_0(\Omega)$ and in $C^0_s(\overline\Omega)$, and it admits a smallest and a largest element with respect to the pointwise ordering. Such structural properties have wide use in topological methods (see \cite{FI} for the case $p\ge 2$).
\end{itemize}

\subsection{Sketch of the proof}
For $p\ge 2$, Theorem \ref{main} is simply \cite[Theorem 1.1]{IMS1}. Thus, we will prove only the case $p\in(1,2)$, which requires a wholly different approach.
\vskip2pt
\noindent
The strategy of proof is initially based on the barrier techniques introduced in \cite{IMS1}. The main point is to construct lower and upper estimates in the form of weak Harnack inequalities for the function $u/\ds$ in terms of its {\em nonlocal excess} 
\[L(u,x_0,m,R)=\Big[\dashint_{\tilde{B}_{x_0,R}}\Big|\frac{u}{\ds}-m\Big|^{p-1}\,dx\Big]^\frac{1}{p-1}\]
where $x_0\in\partial\Omega$, $m\in\R$, $R>0$, and $\tilde{B}_{x_0, R}$ is a ball contained in $\Omega\cap B_{2R}(x_0)$ of radius comparable to $R$ and satisfying (see Figure \ref{fig1})
\[{\rm dist}\big(\tilde{B}_{x_0, R}, B_{R}(x_0)\big) \simeq R,\]
so that in particular $\tilde{B}_{x_0, R}$ is {\em disjoint} from $ B_{R}(x_0)$.
It turns out that the size of the excess of $u/\ds$, which measures its behavior {\em outside} the ball $B_R(x_0)$, provides quantitative estimates on its behavior {\em inside} $B_R(x_0)$ when coupled with a bound on $\fpl u$. This is possible due to the non-local nature of $\fpl$. 
\vskip2pt
\noindent
Precisely, we will prove the following. Let $D_R=B_R(x_0)\cap \Omega$, $K>0$, and $m\ge 0$, then
\beq\label{hil}
\begin{cases}
\fpl u \ge -K & \text{in $D_R$} \\
u \ge m\ds & \text{in $\R^N$}
\end{cases}
\quad \Longrightarrow \quad \inf_{D_{R/2}}\Big(\frac{u}{\ds}-m\Big) \ge \sigma L(u,x_0,m,R)-C(m,K,R) 
\end{equation}
with a fixed $\sigma>0$ only depending on the data $N$, $p$, $s$, and $\Omega$. Similarly, for any $M\ge 0$  
\beq\label{hiu}
\begin{cases}
\fpl u \le K & \text{in $D_R$} \\
u \le M\ds & \text{in $\R^N$}
\end{cases}
\quad \Longrightarrow \quad \inf_{D_{R/2}}\Big(M-\frac{u}{\ds}\Big) \ge \sigma L(u,x_0,M,R)-C(M,K,R). 
\end{equation}
The assumption of a {\em global} point-wise control of $u$ by multiples of $\ds$ is needed to apply comparison principles for the nonlocal operator $\fpl$ and represents the main difference from the local case, as well as the source of many new difficulties which will be detailed below. In order to prove these Harnack inequalities we will have to distinguish the cases when the excess is comparatively large or small, according to the size of the ratio $L(u,x_0,m,R)/m$ (resp.\ $L(u,x_0,M,R)/M$). It is worth mentioning that, differently from what happens in the local case, the proof of \eqref{hiu} is considerably more involved than the one of \eqref{hil}, essentially because the condition $u\le M\ds$ gives no sign information on $u$ near $x_0$, which can then be very small in absolute value in relatively large subsets of $B_R$. Since the operator $\fpl$ is singular precisely when $u\simeq 0$, it is then more delicate to infer bounds on $u$ from bounds on $\fpl u$, compared to the degenerate case $p\ge 2$. 
\vskip2pt
\noindent
The peculiar form of the constants $C(m,K,R)$, $C(M,K,R)$ appearing in \eqref{hil}, \eqref{hiu} respectively, is discussed in Example \ref{cmr} below and plays a major role. We just note here that we must aim at its optimal form, in terms of asymptotic behavior with respect to its arguments. The reason is the following. In order to infer from \eqref{hil}, \eqref{hiu} the desired H\"older regularity result we adapt Krylov's method (see \cite{K3}), applying these inequalities at the scales $R_n=R_0/2^n$ to deduce a decay in oscillation for $u/\ds$ on the corresponding sets $D_{R_n}$. Indeed, from \eqref{hil}, \eqref{hiu} one readily derives for the solutions of
\beq\label{bil}
\begin{cases}
|\fpl u| \le K & \text{in $D_R$} \\
m\ds \le u \le M\ds & \text{in $\R^N$}
\end{cases}
\eeq
the following estimate, holding for suitable $\theta\in(0,1)$:
\beq\label{hib}
\underset{D_{R/2}}{\rm osc}\,\frac{u}{\ds} \le \theta\,\underset{D_R}{\rm osc}\,\frac{u}{\ds}+C(m,R,K)+C(M,R,K).
\eeq
On one hand, \eqref{hib} looks promising: if one can prove good controls on the last two terms as $R\to 0$, the claimed decay in oscillation will follow.  On the other hand, an iterative argument ensures that  for suitable $m_n, M_n$ it holds $m_n\ds\le u\le M_n\ds$, but only in $D_{R_n}$, thus prejudicing the global bound in \eqref{bil}. Therefore we have to apply the weak Harnack inequalities to the truncated function
\[\tilde{u}_n=\max\big\{\min\{u,M_n\ds\},\,m_n\ds\big\},\]
with $m_n, M_n$ iteratively determined at scale $R_n$, which satisfies the bilateral bound in the whole $\R^N$. Due to the nonlocal nature of $\fpl$,  however, these truncations worsen the bound $|\fpl u|\le K$, so that $\tilde u_n$ satisfies
\[\begin{cases}
|\fpl\tilde u_n| \le \tilde K_n & \text{in $D_R$} \\
m_n\ds \le \tilde u_n \le M_n\ds & \text{in $\R^N$,}
\end{cases}\]
for a possibly much bigger $\tilde{K}_n$. Therefore \eqref{hib} holds true with constants depending on $\tilde{K}_n$ (which is rather implicitly constructed by induction), and these have to be iteratively estimated. This purely non-local phenomenon and the corresponding issues have been faced and overcome for the first time in \cite{ROS}, dealing with the linear case $p=2$, through a strong induction argument taking advantage of the simple form of the constants $C(m,R,K)$, $C(M,R,K)$ appearing in the corresponding weak Harnack inequalities. In the nonlinear case, the asymptotic behavior of these constants with respect to their arguments is rather more involved, but still the case $p>2$ has been dealt with in \cite{IMS1}. The singular case $p\in(1,2)$ considered here turns out to be even more delicate and requires a different argument based on the co-area formula.

\subsection{Plan of the paper}
The structure of the paper is the following: in Section \ref{sec2} we collect some useful results and definitions; in Section \ref{sec3} we prove a lower bound for supersolutions of fractional $p$-Laplacian equations; in Section \ref{sec4} we prove an upper bound for subsolutions; in Section \ref{sec5} we prove an oscillation bound for functions with a bounded fractional $p$-Laplacian, and finally obtain weighted H\"older regularity; Appendix \ref{appa} is devoted to the proof of some elementary inequalities, and Appendix \ref{appb} to the proof of a barrier proposition (integrating a similar result of \cite{IMS1}).
\vskip4pt
\noindent
{\bf Notations.} Throughout the paper, for any $U\subset\R^N$ we shall set $U^c=\R^N\setminus U$ and $\chi_U$ denotes the characteristic function of $U$. If $U$ is measurable,  $|U|$ stands for its $N$-dimensional Lebesgue measure. Open balls in $\R^N$ with center $x$ and radius $R$ will be denoted by $B_R(x)$, omitting the $x$-dependence if $x=0$. We denote by ${\rm dist}(x,U)$ the infimum of $|x-y|$ as $y\in U$, and we set $d_U(x)={\rm dist}(x,U^c)$. For any two measurable functions $u,v:U\to\R$, $u\le v$ in $U$ will mean that $u(x)\le v(x)$ for a.e.\ $x\in U$ (and similar expressions). The positive (resp., negative) part of $u$ is denoted $u_+$ (resp., $u_-$), while $u\land v=\min\{u,v\}$ and $u\lor v=\max\{u,v\}$. For brevity, we will set  for all $x\in\R$, $q>0$
\[x^q = \begin{cases}
|x|^{q-1}x&\text{if $x\ne 0$}\\
0&\text{if $x=0$}.
\end{cases}\]
Moreover, $C$ will denote a positive constant whose value may change case by case and whose dependance on the parameters will be specified each time.

\section{Preliminaries}\label{sec2}

\begin{figure}
\centering
\begin{tikzpicture}[scale=1.3]
\filldraw (0, 0) circle (1pt);
\draw (0, 0) node[below]{$x$};
\draw[clip] (-4,3.5) to [out=-20, in=180] (0, 0) to [out=0, in=200] (3.5,2) to [out=20, in=-100] (5, 3.5);
\draw[thin] (0,0) circle (2.25);
\draw (3.5,3) node{$\Omega$};
\draw (0, 2.625) node{$\tilde{B}_{x, R}$};
\draw (0, 2.625) circle (0.36);
\draw (0, 0.9) node{$D_{R}$};
\draw[clip] (0, 0) circle (3);
\draw[dashed] (0, 0) -- (4,4);
\draw (1.32,1.32) node[fill=white]{$2R$};
\draw[thick] (0,0) circle (1.5);
\draw[clip] (0,0) circle (1.5);
\draw[dashed] (0, 0) -- (-2, 2);
\draw (-0.6, 0.6) node[fill=white]{$R$};
\end{tikzpicture}
\caption{The ball $\tilde{B}_{x, R}$, with center on the normal direction.}
\label{fig1}
\end{figure}
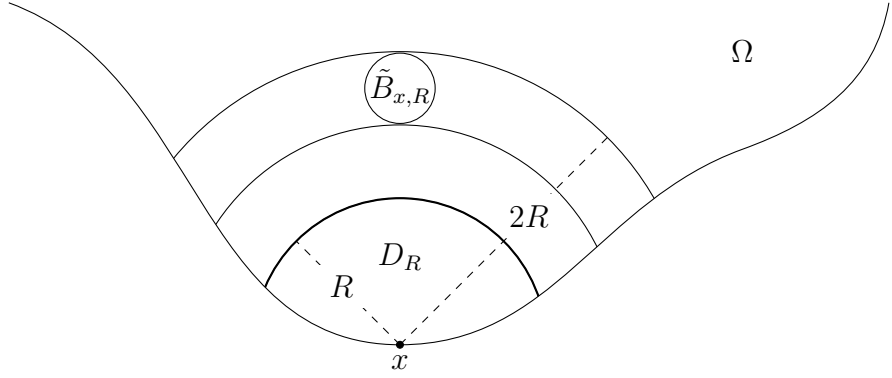

\noindent
In this section we recall some notions and results that will be used in our argument. 

\subsection{Properties of the distance function}
We begin with some geometrical remarks, referring to \cite{IMS1} for details. Since $\Omega$ is $C^{1,1}$-smooth, it satisfies the interior sphere property with optimal (half) radius
\[\rho_\Omega = \sup\Big\{R>0:\,\text{for all $x\in\partial\Omega$ there is $y\in\Omega$ s.t.\ $B_{2R}(y)\subseteq\Omega$, $x\in\partial B_{2R}(y)$}\Big\} > 0.\]
The distance function ${\rm d}_\Omega$ fulfils $|\nabla {\rm d}_\Omega|=1$ a.e.\, and it is globally $C^{1,1}$ on the closure of $\{x\in \Omega: 0<{\rm d}_\Omega(x)<\rho_\Omega\}$. Moreover, the nearest point projection
\[\Pi(x)={\rm Argmin}\{|x-y|:y\in \partial\Omega\}\]
is well defined and uniformly Lipschitz on $\{x\in \Omega: 0<{\rm d}_\Omega<\rho_\Omega\}$ (see \cite{LS}). For all $x_0\in\partial\Omega$, $R\in(0,\rho_\Omega)$ we denote
\[D_R(x_0) = B_R(x_0)\cap\Omega,\]
omitting the dependance on $x_0$ when $x_0=0$.
In addition, there exists a ball $\tilde B_{x_0,R}\subset\Omega$ (see Figure \ref{fig1}), with radius $R/4$, s.t.\ $\tilde B_{x_0,R} \subset D_{2R}(x_0)\setminus D_{3R/2}(x_0)$ and
\beq\label{geo}
\inf_{x\in\tilde B_{x_0,R}}{\rm d}_\Omega(x) \ge \frac{3R}{2}.
\eeq
Clearly, the boundary of $D_R$ fails to be smooth in general, hence the interior sphere property does not hold. So, in our following results, we will need to use the regularized set $A_R(x_0)$, defined as in \cite[Lemma 3.1]{IMS1} by
\beq\label{ard}
A_R(x_0) = \bigcup\Big\{B_r(y):\,r\ge\frac{R}{8},\,B_r(y)\subset D_R(x_0)\Big\}
\eeq
(see Figure \ref{fig2}). By construction, $A_R(x_0)$ satisfies the interior sphere property with $\rho_{A_R(x_0)}\ge R/16$. Also, this set enjoys some useful properties:

\begin{lemma}\label{arp}
Let $x_0\in\partial\Omega$, $R\in(0,\rho_\Omega)$, $A_R(x_0)\subseteq\Omega$ be defined as in \eqref{ard}. Then
\begin{enumroman}
\item\label{arp1} $D_{3R/4}(x_0)\subset A_R(x_0)\subset D_R(x_0)$;
\item\label{arp2} for all $x\in D_{3R/4}(x_0)$
\[\frac{{\rm d}_\Omega(x)}{6}\le{\rm d}_{A_R(x_0)}(x)\le{\rm d}_\Omega(x).\]
\end{enumroman}
\end{lemma}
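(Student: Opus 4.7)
The plan is to reduce everything to the single geometric claim that $B_{\delta/6}(x)\subset A_R(x_0)$ for every $x\in D_{3R/4}(x_0)$, where $\delta={\rm d}_\Omega(x)$. Both halves of (i) and the upper bound in (ii) are essentially immediate: $A_R(x_0)\subset D_R(x_0)$ is built into the definition \eqref{ard}, and $A_R(x_0)\subset\Omega$ gives ${\rm d}_{A_R(x_0)}(x)\le{\rm d}_\Omega(x)$ via $\Omega^c\subset A_R(x_0)^c$. Moreover, applying the geometric claim with $y=x$ yields the missing inclusion $D_{3R/4}(x_0)\subset A_R(x_0)$, while letting $y$ range over $B_{\delta/6}(x)$ gives the lower bound in (ii).

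To prove the claim, fix $x\in D_{3R/4}(x_0)$ and record the preliminary bound $\delta\le|x-x_0|<3R/4$, which holds since $x_0\in\partial\Omega\subset\Omega^c$. Given $y\in B_{\delta/6}(x)$, I want to exhibit a ball of radius exactly $R/8$ containing $y$ and contained in $D_R(x_0)=B_R(x_0)\cap\Omega$. I split on the size of $\delta$.

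When $\delta\ge 3R/20$, the candidate ball is $B_{R/8}(y)$ itself: the inclusion in $\Omega$ follows from ${\rm d}_\Omega(y)\ge\delta-|y-x|>5\delta/6\ge R/8$, and the inclusion in $B_R(x_0)$ from $|y-x_0|<\delta/6+3R/4<R/8+3R/4=7R/8$ (using $\delta<3R/4$). When instead $\delta<3R/20$, the point $x$ is too close to $\partial\Omega$ for a ball around it to remain in $\Omega$, so I invoke the interior sphere property: letting $z_0=\Pi(x)\in\partial\Omega$ and $\nu$ be the inward unit normal at $z_0$, the $C^{1,1}$-regularity of $\partial\Omega$ gives $x=z_0+\delta\nu$ and $B_{2\rho_\Omega}(z_0+2\rho_\Omega\nu)\subset\Omega$. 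The candidate ball is now $B_{R/8}(w)$ with $w=z_0+(R/8)\nu$, and $B_{R/8}(w)\subset B_{2\rho_\Omega}(z_0+2\rho_\Omega\nu)\subset\Omega$ follows from $R/8<\rho_\Omega<2\rho_\Omega$. The remaining inclusions rely on the elementary bound $|x-w|=|R/8-\delta|<R/8$ (valid since $\delta<3R/20<R/4$): by the triangle inequality $|y-w|<\delta/6+|R/8-\delta|$, which a short case check separating $\delta\le R/8$ from $\delta>R/8$ shows to be $<R/8$ whenever $\delta<3R/20$, while $|w-x_0|\le|x-w|+|x-x_0|<R/8+3R/4=7R/8$ gives $B_{R/8}(w)\subset B_R(x_0)$.

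The main technical obstacle is the second case, since when $y$ itself can sit arbitrarily close to $\partial\Omega$ no ball centered at $y$ can have the required radius $R/8$ while remaining in $\Omega$; one must instead slide the center inward along the normal direction at $\Pi(x)$, and the $C^{1,1}$-regularity of $\partial\Omega$ together with $R<\rho_\Omega$ is exactly what enables this. The numerical thresholds $3R/20$ (for the case split) and $1/6$ (in the radius of the inner ball) are tuned so that the two cases patch together at $\delta=3R/20$ via the identity $5\delta/6=R/8$; slightly different numerology would go through with a worse constant than $6$, so the value stated in (ii) is essentially what this argument produces.
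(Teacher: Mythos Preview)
Your proof is correct and follows essentially the same approach as the paper: after dismissing the trivial inclusions, one case-splits on the size of $\delta={\rm d}_\Omega(x)$ and, in the near-boundary case, invokes the interior sphere property to produce a ball of radius $R/8$ inside $D_R(x_0)$. The paper splits at $\delta=R/8$ rather than $3R/20$ and argues directly for the bound on ${\rm d}_{A_R}(x)$ (obtaining in fact ${\rm d}_{A_R}(x)\ge{\rm d}_\Omega(x)$ in the small-$\delta$ case) rather than through your unified claim $B_{\delta/6}(x)\subset A_R$, but these are cosmetic differences.
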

\begin{proof}
For simplicity, let $x_0=0\in\partial\Omega$ and omit the center in all notations. Note that $A_R\subseteq D_R$ by construction and ${\rm d}_{A_R}\le {\rm d}_\Omega$ trivially from $A_R\subseteq \Omega$. Fix $x\in D_{3R/4}$, and distinguish two cases:
\begin{itemize}[leftmargin=1cm]
\item[$(a)$] If ${\rm d}_\Omega(x)>R/8$, then $B_{R/8}(x)\subseteq\Omega$, and for all $z\in B_{R/8}(x)$ we have
\[|z| \le |z-x|+|x| \le \frac{R}{8}+\frac{3R}{4} < R.\]
So, $B_{R/8}(x)\subseteq D_R$, hence $B_{R/8}(x)\subseteq A_R$ by \eqref{ard}, in particular $x\in A_R$, proving \ref{arp1}. This in turn implies
\[{\rm d}_{A_R}(x)\ge  \frac{R}{8}\]
while in $D_{3R/4}$ it holds ${\rm d}_\Omega(y)\le 3R/4$. Chaining these inequalities proves the first inequality in \ref{arp2}.
\item[$(b)$] If ${\rm d}_\Omega(x)\le R/8$, then let $\bar x\in\partial\Omega$ be one point s.t.\
\[{\rm d}_\Omega(x) = |x-\bar x| = r,\]
and $\bar y\in\Omega$ be s.t.\ $B_{R/8}(\bar y)$ is tangent to $\partial\Omega$ at $\bar x$. Since $B_r(x)$ is tangent to $\partial\Omega$ at $\bar x$ as well and $r<R/8$, we infer $B_r(x)\subset B_{R/8}(\bar y)$ and $|x-\bar y|\le R/8$. For all $z\in B_{R/8}(\bar y)$ we have
\[|z| \le |z-\bar y|+|\bar y-x|+|x| < \frac{R}{8}+\frac{R}{8}+\frac{3R}{4} = R.\]
So $x\in B_{R/8}(\bar y)\subseteq D_R$, hence $x\in A_R$, which proves \ref{arp1}. Also, from $B_r(x)\subseteq A_R$ we get
\[{\rm d}_{A_R}(x)\ge r={\rm d}_\Omega(x),\]
proving the first inequality in \ref{arp2}.
\end{itemize}
In both cases we conclude.
\end{proof}

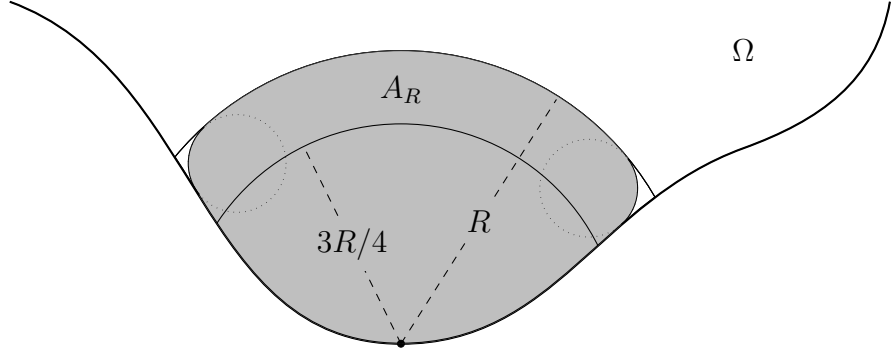
\begin{figure}
\centering
\begin{tikzpicture}[scale=1.3]
\filldraw[lightgray] (-1.675,1.845) circle (0.5);
\filldraw[lightgray] (1.92,1.592) circle (0.5);
\filldraw (0, 0) circle (1pt);
\draw (3.5,3) node{$\Omega$};
\draw[very thin] (-1.675,1.845) circle (0.5);
\draw[very thin] (1.92,1.592) circle (0.5);
\draw[thick] (-4,3.5) to [out=-20, in=180] (0, 0) to [out=0, in=200] (3.5,2) to [out=20, in=-100] (5, 3.5);
\draw[clip] (-4,3.5) to [out=-20, in=180] (0, 0) to [out=0, in=200] (3.5,2) to [out=20, in=-100] (5, 3.5);
\draw (0, 0) circle (3);
\filldraw[lightgray] (0, 0) circle (2.56);
\draw[clip] (0, 0) circle (3);
\filldraw[lightgray]  (0, 0) -- (2.5, 2.05) -- (0, 4) -- (-2.26, 2.5) -- (0, 0);
\draw[very thin, dotted] (1.92,1.592) circle (0.5);
\draw[very thin, dotted] (-1.675,1.845) circle (0.5);
\draw (0, 0) circle (2.25);
\filldraw (0, 0) circle (1pt);
\draw (0, 2.6) node{$A_{R}$};
\draw[dashed] (0,0) -- node[ fill=lightgray]{$R$} (1.59, 2.5);
\draw[dashed] (0,0) -- node[fill=lightgray]{$3R/4$} (-1,2.03);
\draw[thick] (-4,3.5) to [out=-20, in=180] (0, 0) to [out=0, in=200] (3.5,2) to [out=20, in=-100] (5, 3.5);
\draw (0, 0) circle (3);
%\fill[pattern=dots] (0, 0) circle (2);
\end{tikzpicture}
\caption{The regularized set $A_{R}$ in gray satisfies $D_{3R/4}\subset A_R\subset D_R$.}
\label{fig2}
\end{figure}

\noindent
At some step of our proof we will need to estimate the $(N-1)$-dimensional Hausdorff measure (denoted $\mathcal{H}^{N-1}$) of the level set
\[S_{R,\xi}(x_0) = \big\{x\in D_R(x_0):\,{\rm d}_\Omega(x)=\xi\big\},\]
for some $x_0\in\partial\Omega$, $R\in(0,\rho_\Omega)$, and $\xi\in(0,R)$. We have the following result:

\begin{lemma}\label{lev}
Let $x_0\in\partial\Omega$, $R\in(0,\rho_\Omega)$, and $\xi>0$. Then, there exists $C=C(\Omega)>0$ s.t.\
\[\mathcal{H}^{N-1}(S_{R,\xi}(x_0)) \le CR^{N-1}.\]
\end{lemma}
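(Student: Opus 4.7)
The plan is to parameterize the level set $S_{R,\xi}(x_0)$ via the nearest point projection $\Pi$ onto $\partial\Omega$. Recall from the discussion preceding the lemma that $\Omega$ being $C^{1,1}$-smooth guarantees that the distance function is $C^{1,1}$ on the tubular strip $\{0<{\rm d}_\Omega<\rho_\Omega\}$, the projection $\Pi$ is well defined and Lipschitz on the same strip, and the inward unit normal field $\nu$ is Lipschitz on $\partial\Omega$ with constant $L=L(\Omega)$.

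First I would reduce to a non-trivial range of $\xi$. If $x\in D_R(x_0)$, then ${\rm d}_\Omega(x)\le|x-x_0|\le R$ since $x_0\in\partial\Omega\subset\Omega^c$; hence $S_{R,\xi}(x_0)=\emptyset$ whenever $\xi>R$, and the inequality is vacuous. Since the hypothesis is $R<\rho_\Omega$, we may therefore restrict to $0<\xi\le R<\rho_\Omega$, which is precisely the regime where the tubular neighborhood parameterization applies.

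In this range, every $x\in S_{R,\xi}(x_0)$ lies on the inward normal through $\Pi(x)$ at distance $\xi$, so $x=\Phi_\xi(\Pi(x))$ with $\Phi_\xi(y):=y+\xi\,\nu(y)$. The map $\Phi_\xi$ is Lipschitz on $\partial\Omega$ with constant at most $1+\xi L\le 1+\rho_\Omega L=:C(\Omega)$. Moreover, for $x\in S_{R,\xi}(x_0)$ the triangle inequality gives $|\Pi(x)-x_0|\le|\Pi(x)-x|+|x-x_0|\le\xi+R\le 2R$, so the projected set lies in $\partial\Omega\cap B_{2R}(x_0)$. Combining these two facts,
\[
\mathcal{H}^{N-1}\big(S_{R,\xi}(x_0)\big)\le C(\Omega)^{N-1}\,\mathcal{H}^{N-1}\big(\partial\Omega\cap B_{2R}(x_0)\big),
\]
and since $\partial\Omega$ is a $C^{1,1}$ hypersurface the surface-area bound $\mathcal{H}^{N-1}(\partial\Omega\cap B_{2R}(x_0))\le C(\Omega)R^{N-1}$ is a standard consequence of representing $\partial\Omega$ as a Lipschitz graph in a neighborhood of $x_0$.

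I do not expect any serious obstacle: the whole argument only uses the $C^{1,1}$-regularity of $\partial\Omega$ and the tubular-neighborhood structure already recorded in Section \ref{sec2}. The only point deserving a moment of care is to justify using $\Pi$ on the full $S_{R,\xi}(x_0)$, which is precisely what the bound $\xi<\rho_\Omega$ secures.
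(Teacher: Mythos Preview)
Your proof is correct and follows essentially the same route as the paper: both arguments reduce to $\xi<\rho_\Omega$, use the bi-Lipschitz correspondence between the level set $\{{\rm d}_\Omega=\xi\}$ and $\partial\Omega$ furnished by the tubular neighborhood, show that the relevant piece of $\partial\Omega$ sits in a ball of radius comparable to $R$, and conclude by the standard area bound for a $C^{1,1}$ hypersurface. The only cosmetic difference is that the paper applies the area formula to the projection $\Pi_\Omega$ (from the level set down to $\partial\Omega$), while you apply it to the inverse parameterization $\Phi_\xi=\mathrm{id}+\xi\nu$ (from $\partial\Omega$ up to the level set); your direction makes the Lipschitz inequality for $\mathcal{H}^{N-1}$ slightly more transparent, but the two are equivalent.
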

\begin{proof}
Since ${\rm d}_\Omega(x)\le\rho_\Omega$ for all $x\in D_R(x_0)$, we may assume $\xi\le\rho_\Omega$. By the implicit function theorem $S_{R,\xi}(x_0)$ is a Lipschitz $(N-1)$-dimensional submanifold of $\R^N$ and the metric projection $\Pi_\Omega:D_R(x_0)\to\partial\Omega$ has a uniform Lipschitz bound. By the area formula
\[\mathcal{H}^{N-1}(S_{R,\xi}(x_0)) \le C\mathcal{H}^{N-1}(\Pi_\Omega(S_{R, \xi}(x_0)))\le C\mathcal{H}^{N-1}(\Pi_\Omega(D_{R}(x_0))),\]
for some $C>0$ depending on $\Omega$. Also, by the Lipschitz continuity of $\Pi_\Omega$, we can find $\eta\ge 1$ depending on $\Omega$ s.t.\
\[\Pi_\Omega(D_R(x_0)) \subseteq B_{\eta R}(x_0)\cap\partial\Omega.\]
Therefore, by the regularity of $\partial\Omega$ we infer
\[\mathcal{H}^{N-1}(S_{R,\xi}(x_0)) \le C\mathcal{H}^{N-1}(B_{\eta R}(x_0)\cap\partial\Omega) \le CR^{N-1},\]
with $C>0$ depending on $\Omega$.
\end{proof}

\subsection{Functional setting}

We will now introduce the functional spaces that we are going to work with, referring to \cite{L} for details. Fix $p>1$, $s\in(0,1)$, $U\subseteq\R^N$ and for all measurable $u:U\to\R$ define the Gagliardo seminorm
\[[u]_{s,p,U} = \Big[\iint_{U\times U}\frac{|u(x)-u(y)|^p}{|x-y|^{N+ps}}\,dx\,dy\Big]^\frac{1}{p}.\]
The basic fractional Sobolev space is defined by
\[W^{s,p}(U) = \big\{u\in L^p(U):\,[u]_{s,p,U}<\infty\big\},\]
while we set
\[W^{s,p}_0(U) = \big\{u\in W^{s,p}(\R^N):\,u=0 \ \text{in $U^c$}\big\}.\]
If $U$ has finite measure, the latter is a uniformly convex, separable Banach space under the norm $\|u\|=[u]_{s,p,U}$, with dual space $W^{-s,p'}(U)=(W^{s,p}_0(U))^*$. Also, if $U$ is bounded we define
\[\widetilde{W}^{s,p}(U) = \Big\{u\in L^p_{\rm loc}(\R^N):\,u\in W^{s,p}(V) \ \text{for some $V\Supset U$,} \ \int_{\R^N}\frac{|u(x)|^{p-1}}{(1+|x|)^{N+ps}}\,dx < \infty\Big\}.\]
Such space is the natural framework for the study of the fractional $p$-Laplacian. Indeed, by \cite[Lemma 2.3]{IMS} we can define a continuous, monotone operator $\fpl:\widetilde{W}^{s,p}(U)\to W^{-s,p'}(U)$ by setting for all $u\in\widetilde{W}^{s,p}(U)$, $\varphi\in W^{s,p}_0(U)$
\[\langle\fpl u,\varphi\rangle = \iint_{\R^N\times\R^N}\frac{(u(x)-u(y))^{p-1}(\varphi(x)-\varphi(y))}{|x-y|^{N+ps}}\,dx\,dy.\]
Such definition agrees with the one given in Section \ref{sec1}, since the restriction of $\fpl$ to $\w$ coincides with the gradient of the functional $u\mapsto [u]_{s,p,\R^N}/p$. We note that, at least for $p\ge 2$ and $u$ smooth enough, the fractional $p$-Laplacian allows for the following formulation:
\[\fpl u(x) = 2\lim_{\eps\to 0^+}\int_{B_\eps^c(x)}\frac{(u(x)-u(y))^{p-1}}{|x-y|^{N+ps}}\,dy.\]
Let us focus on the equation
\beq\label{feq}
\fpl u = f(x) \quad \text{in $U$}
\eeq
(without Dirichlet conditions), with $f\in L^\infty(U)$ and $u\in\widetilde W^{s,p}(U)$. We say that $u$ is a {\em (weak) supersolution} of \eqref{feq} if for all $\varphi\in W^{s,p}_0(U)_+$
\[\langle\fpl u,\varphi\rangle \ge \int_U f(x)\varphi(x)\,dx.\]
The definition of a {\em subsolution} is analogous. Finally, we say that $u$ is a {\em (weak) solution} of \eqref{feq} if it is both a super- and a subsolution. Accordingly, a solution of the Dirichlet problem \eqref{dir} is a function $u\in W^{s,p}_0(\Omega)$ s.t.\ for all $\varphi\in W^{s,p}_0(\Omega)$
\[\langle\fpl u,\varphi\rangle = \int_\Omega f(x)\varphi(x)\,dx.\]
All similar expressions throughout the paper will be meant in such weak sense.
\vskip2pt
\noindent
For the reader's convenience, we will finally recall some useful properties of (super-, sub-) solutions. Such properties are proved in \cite{IMS1}, but we remark that they hold for {\em any} $p>1$. We begin with a weak comparison principle:

\begin{proposition}\label{wcp}
{\rm\cite[Proposition 2.1]{IMS1}} Let $u,v\in\widetilde{W}^{s,p}(U)$ satisfy
\[\begin{cases}
\fpl u \le \fpl v & \text{in $U$} \\
u \le v & \text{in $U^c$.}
\end{cases}\]
Then, $u\le v$ in $\R^N$.
\end{proposition}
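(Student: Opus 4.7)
The plan is to test the difference of the two weak inequalities against the natural cutoff $\varphi:=(u-v)_+$, then combine the resulting sign information with the strict monotonicity of the map $t\mapsto t^{p-1}$ on $\R$ (note that under the paper's convention $t^q=|t|^{q-1}t$, the map $t^{p-1}$ is odd and strictly increasing).

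First I would justify that $\varphi\in W^{s,p}_0(U)$, so that it is a legitimate test function. The vanishing condition $\varphi\equiv 0$ on $U^c$ is immediate from the hypothesis. For the Sobolev regularity one splits $[\varphi]_{s,p,\R^N}^p$ into contributions from $U\times U$, from $U^c\times U^c$ and from the two mixed regions: the first is controlled by $[u-v]_{s,p,V}^p$ for a suitable $V\Supset U$ (finite because $u,v\in\widetilde W^{s,p}(U)$), the second vanishes, and the mixed pieces reduce to a weighted Hardy-type integral of $|\varphi|^p$ that is controlled by the tail condition built into the definition of $\widetilde W^{s,p}(U)$, possibly after first truncating $\varphi$ at a fixed height $k$ and passing to the limit $k\to\infty$.

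Subtracting the two weak inequalities tested against $\varphi\ge 0$ yields
\[I := \iint_{\R^N\times\R^N}\frac{\bigl[(u(x)-u(y))^{p-1}-(v(x)-v(y))^{p-1}\bigr]\bigl(\varphi(x)-\varphi(y)\bigr)}{|x-y|^{N+ps}}\,dx\,dy \le 0.\]
The crux of the proof is a pointwise sign analysis of the integrand. Setting $A=\{u>v\}\subseteq U$, I would partition $\R^N\times\R^N$ into $A\times A$, $A^c\times A^c$ and the two mixed pieces. On $A\times A$ the integrand reduces to $(a^{p-1}-b^{p-1})(a-b)/|x-y|^{N+ps}$ with $a=u(x)-u(y)$ and $b=v(x)-v(y)$, hence is nonnegative by monotonicity of $t^{p-1}$. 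On $A^c\times A^c$ the factor $\varphi(x)-\varphi(y)$ vanishes. On $A\times A^c$ one has $\varphi(x)-\varphi(y)=u(x)-v(x)>0$, and from $u(x)>v(x)$ together with $u(y)\le v(y)$ it follows that $u(x)-u(y)>v(x)-v(y)$; by the \emph{strict} monotonicity of $t^{p-1}$ the bracket is then strictly positive, so the integrand is strictly positive on all of $A\times A^c$. The region $A^c\times A$ is symmetric.

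Combining these four cases, the integrand is $\ge 0$ a.e.\ on $\R^N\times\R^N$, and coupled with $I\le 0$ it must vanish a.e. Since it is strictly positive throughout $A\times A^c$, this region has measure zero, i.e., $|A|\cdot|A^c|=0$; but $A\subseteq U$ while $|U^c|=\infty$ forces $|A^c|>0$, hence $|A|=0$, which is the desired conclusion $u\le v$ a.e. The step that I expect to require the most care is the first one, the verification that $\varphi\in W^{s,p}_0(U)$: the space $\widetilde W^{s,p}(U)$ is designed precisely to make $\fpl u$ a well-defined functional on $W^{s,p}_0(U)$, but $(u-v)_+$ does not a priori lie in that space, so a careful tail estimate (or a truncation-and-limit argument) is needed to make the pairing rigorous.
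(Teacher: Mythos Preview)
The paper does not actually prove this proposition: it is quoted verbatim from \cite[Proposition~2.1]{IMS1} with no argument given here. Your approach---testing with $\varphi=(u-v)_+$ and exploiting the strict monotonicity of $t\mapsto t^{p-1}$---is the standard one and is correct; it is also the argument used in \cite{IMS1}.

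One minor simplification: your verification that $\varphi\in W^{s,p}_0(U)$ does not need the tail condition in $\widetilde W^{s,p}(U)$. By definition there is $V\Supset U$ with $u,v\in W^{s,p}(V)$, hence $(u-v)_+\in W^{s,p}(V)$, and by hypothesis $(u-v)_+=0$ on $U^c\supset V\setminus U$. Extending by zero, the only extra contribution to $[\varphi]_{s,p,\R^N}^p$ beyond $[\varphi]_{s,p,V}^p$ is
\[
2\int_U\int_{V^c}\frac{|\varphi(x)|^p}{|x-y|^{N+ps}}\,dy\,dx,
\]
and since ${\rm dist}(U,V^c)>0$ the inner integral is bounded uniformly in $x\in U$, so this is controlled by $\|\varphi\|_{L^p(U)}^p<\infty$. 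The tail condition is what makes $\fpl u$, $\fpl v$ well-defined as functionals on $W^{s,p}_0(U)$, not what puts $\varphi$ in that space. Otherwise your pointwise sign analysis and the conclusion via $|A|\cdot|A^c|=0$ (using that $U$ is bounded, which is built into the definition of $\widetilde W^{s,p}(U)$) are clean and complete.
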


\noindent
The next result is a nonlocal superposition principle:

\begin{proposition}\label{spp}
{\rm\cite[Proposition 2.6]{IMS1}} Let $U\subset\R^N$ be bounded, $u\in\widetilde{W}^{s,p}(U)$, $v\in L^1_{\rm loc}(\R^N)$, $V={\rm supp}(v-u)$ satisfy $U\Subset V^c$ and
\[\int_V\frac{|v(y)|^{p-1}}{(1+|y|)^{N+ps}}\,dy < \infty.\]
Set for all $x\in\R^N$
\[w(x) = \begin{cases}
u(x) & \text{if $x\in V^c$} \\
v(x) & \text{if $x\in V$.}
\end{cases}\]
Then $w\in\widetilde{W}^{s,p}(U)$ and  
\[\fpl w(x) = \fpl u(x)+2\int_V\frac{(u(x)-v(y))^{p-1}-(u(x)-u(y))^{p-1}}{|x-y|^{N+ps}}\,dy\]
weakly in $U$.
\end{proposition}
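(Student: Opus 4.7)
The plan is to verify in turn that $w\in\widetilde{W}^{s,p}(U)$, and then to identify $\fpl w$ by decomposing the double integral defining $\langle\fpl w,\varphi\rangle$ according to the four regions of $\R^N\times\R^N$ induced by $V$ and $V^c$.

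\medskip

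\emph{Sobolev regularity.} Since $V={\rm supp}(v-u)$, we have $u=v$ a.e.\ on $V^c$, hence $w=u$ a.e.\ there. Because $U\Subset V^c$, one can interpose an open $V'$ with $U\Subset V'\Subset V^c$; then $w=u$ a.e.\ on $V'$, so $[w]_{s,p,V'}=[u]_{s,p,V'}<\infty$. Tail integrability follows from
\[\int_{\R^N}\frac{|w(y)|^{p-1}}{(1+|y|)^{N+ps}}\,dy=\int_{V^c}\frac{|u(y)|^{p-1}}{(1+|y|)^{N+ps}}\,dy+\int_V\frac{|v(y)|^{p-1}}{(1+|y|)^{N+ps}}\,dy,\]
both terms being finite by $u\in\widetilde{W}^{s,p}(U)$ and the hypothesis on $v$; therefore $w\in\widetilde{W}^{s,p}(U)$.

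\medskip

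\emph{Identification of $\fpl w$.} Fix $\varphi\in W^{s,p}_0(U)$; since ${\rm supp}\,\varphi\subset U\subset V^c$, $\varphi$ vanishes on $V$. Partition $\R^N\times\R^N$ into $V^c\times V^c$, $V^c\times V$, $V\times V^c$, and $V\times V$. On $V\times V$ the factor $\varphi(x)-\varphi(y)$ is zero, contributing nothing either to $\langle\fpl w,\varphi\rangle$ or to $\langle\fpl u,\varphi\rangle$. On $V^c\times V^c$ both integrands coincide since $w=u$. On the two mixed pieces, symmetry in $x,y$ combined with $\varphi(y)=0$ for $y\in V$ yields, after subtraction,
\[\langle\fpl w-\fpl u,\varphi\rangle=2\iint_{V^c\times V}\frac{\big[(u(x)-v(y))^{p-1}-(u(x)-u(y))^{p-1}\big]\varphi(x)}{|x-y|^{N+ps}}\,dx\,dy.\]
Restricting to $x\in U$ (since ${\rm supp}\,\varphi\subset U$) and applying Fubini's theorem yields the announced formula.

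\medskip

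\emph{Main obstacle.} The delicate step is the absolute integrability, which both justifies Fubini and guarantees that the inner integral defining $\fpl w(x)$ is finite for a.e.\ $x\in U$. Setting $\delta={\rm dist}(U,V)>0$, one has $|x-y|\ge\delta$ and $|x-y|$ comparable to $1+|y|$ uniformly for $x\in U$ and $y\in V$. Applying the elementary inequality $|a^{p-1}-b^{p-1}|\le C_p|a-b|^{p-1}$ valid for $p\in(1,2]$ (or its counterpart $|a^{p-1}-b^{p-1}|\le C_p(|a|^{p-2}+|b|^{p-2})|a-b|$ for $p\ge 2$) with $a=u(x)-v(y)$, $b=u(x)-u(y)$, and $a-b=u(y)-v(y)$, bounds the numerator by a sum of monomials in $|u(x)|$, $|u(y)|$ and $|v(y)|$ of total degree $p-1$; Young's inequality reduces these to pure $(p-1)$-th powers, after which the tail hypotheses on $u$ and $v$ yield a bound on the inner integral uniform in $x\in U$. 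Pairing with $\varphi\in W^{s,p}_0(U)\hookrightarrow L^1(U)$ closes the argument.
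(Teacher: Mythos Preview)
The paper does not prove this proposition; it is quoted verbatim from \cite[Proposition 2.6]{IMS1} and used as a black box throughout. There is therefore no ``paper's own proof'' to compare against.

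Your argument is correct and is the natural one (and is essentially how the result is proved in \cite{IMS1}). The fourfold decomposition of $\R^N\times\R^N$ according to $V$ and $V^c$, together with the vanishing of $\varphi$ on $V$, reduces the difference $\langle\fpl w-\fpl u,\varphi\rangle$ to the two mixed pieces, which by symmetry collapse to the stated single integral. The integrability check is the only point requiring care, and you handle it properly: since $U$ is bounded and ${\rm dist}(U,V)>0$, the kernel $|x-y|^{-N-ps}$ is uniformly comparable to $(1+|y|)^{-N-ps}$ for $x\in U$, $y\in V$, so the tail hypotheses on $u$ and $v$ suffice. One minor simplification: rather than invoking the $p$-dependent inequalities for $|a^{p-1}-b^{p-1}|$, you can simply bound each of $|(u(x)-v(y))^{p-1}|$ and $|(u(x)-u(y))^{p-1}|$ separately by $C(|u(x)|^{p-1}+|v(y)|^{p-1})$ and $C(|u(x)|^{p-1}+|u(y)|^{p-1})$, which works uniformly in $p>1$ and avoids the case distinction.
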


\noindent
Finally, for all measurable function $u$, $x\in\R^N$, $m\in\R$, and $R>0$, we define the nonlocal excess as in Section \ref{sec1}:
\beq\label{lum}
L(u,x,m,R) = \Big[\dashint_{\tilde B_{x,R}}\Big|\frac{u(y)}{\ds(y)}-m\Big|^{p-1}\,dy\Big]^\frac{1}{p-1}.
\eeq
The quantity $L(u,x,m,R)$ will play a crucial role in the subsequent arguments. Also, for all $q>0$, $s\in(0,1)$ we borrow from \cite{DKP} the (slightly modified) definition of the nonlocal tail
\beq\label{tail}
{\rm tail}_q(u,x,R) = \Big[\int_{\Omega\cap B_R^c(x)}\frac{|u(y)|^q}{|x-y|^{N+s}}\,dy\Big]^\frac{1}{q}.
\eeq
As usual, we omit $x$ whenever $x=0$.

\subsection{Optimal regularity up to the boundary}

As pointed out in Section \ref{sec1}, combining interior H\"older estimates from \cite{BLS,GL} and boundary estimates from \cite{IMS1} we can obtain an optimal global regularity result for solutions of \eqref{dir}. Here we prove this assertion.
\vskip2pt
\noindent
First we recall a special case of the more general results obtained for the degenerate and singular cases, respectively, in \cite{BLS,GL}, using the following definition of nonlocal tail for any $u$, $x_1\in\R^N$, $R>0$:
\[{\rm Tail}(u,x_1,R) = \Big[R^{ps}\int_{B_R^c(x_1)}\frac{|u(x)|^{p-1}}{|x-x_1|^{N+ps}}\,dx\Big]^\frac{1}{p-1}.\]

\begin{proposition}\label{lin}
Let $U\subset\R^N$ be open and bounded, $u\in\widetilde{W}^{s,p}(U)$ be a local weak solution of \eqref{feq}, with $f\in L^\infty(U)$, and let $\gamma$ satisfy
\[0 < \gamma < \min\Big\{1,\,\frac{ps}{p-1}\Big\}.\]
Then, $u\in C^\gamma_{\rm loc}(U)$ and there exists $C=C(N,p,s,\gamma)>0$ s.t.\ for all $x_1\in U$, $R>0$ s.t.\ $B_{4R}(x_1)\subseteq U$
\beq\label{lin1}
[u]_{C^\gamma(B_{R/8}(x_1))} \le \frac{C}{R^\gamma}\Big[\|u\|_{L^\infty(B_R(x_1))}+R^\frac{ps}{p-1}\|f\|_{L^\infty(U)}^{\frac{1}{p-1}}+{\rm Tail}(u,x_1,R)\Big].
\eeq
\end{proposition}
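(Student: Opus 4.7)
The plan is straightforward: Proposition \ref{lin} is stated explicitly as a special case of the interior regularity theorems proved in \cite{BLS} (for $p\ge 2$) and \cite{GL} (for $p\in(1,2)$), so the proof amounts to unifying those two statements and recasting them in the normalization we prefer. Both references provide, for local weak solutions of $\fpl u = f$, a quantitative local $C^\gamma$ estimate for every $\gamma$ in the optimal range $(0,\min\{1,ps/(p-1)\})$, with right-hand side controlled by the local sup-norm of $u$, the $L^\infty$-norm of $f$ with the correct $(p-1)$-homogeneous exponent, and a nonlocal tail. The only task is to verify that the structural form of \eqref{lin1} matches the one in those references, up to a rescaling.

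First I would reduce to a normalized problem. Translating $x_1$ to the origin and setting
\[
\Lambda = \|u\|_{L^\infty(B_R)} + R^{\frac{ps}{p-1}}\|f\|_{L^\infty(U)}^{\frac{1}{p-1}} + \mathrm{Tail}(u,0,R),
\]
I would consider $v(x) = u(Rx)/\Lambda$. By the scaling identity $\fpl u(Rx) = R^{-ps}\fpl v(x)\,\Lambda^{p-1}$, which embodies the $ps$-order and $(p-1)$-homogeneity of the operator, the function $v$ is a local weak solution of $\fpl v = \tilde f$ on $B_4$, where $\tilde f(x) = R^{ps} f(Rx) \Lambda^{1-p}$ satisfies $\|\tilde f\|_{L^\infty(B_4)}\le 1$. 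Moreover $\|v\|_{L^\infty(B_1)}\le 1$ and, from the explicit factor $R^{ps}$ appearing in the definition of $\mathrm{Tail}(u,0,R)$, also $\mathrm{Tail}(v,0,1)\le 1$ after a change of variables in the tail integral.

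Next I would apply the cited interior estimates to $v$ on $B_{1/8}\subset B_1$: \cite[Theorem 1.4 and Corollary 5.6]{BLS} if $p\ge 2$, and \cite[Theorem 1.1]{GL} if $p\in(1,2)$. Both provide a bound of the form $[v]_{C^\gamma(B_{1/8})}\le C(N,p,s,\gamma)$ in terms of the normalized quantities just controlled by $1$. Scaling back via $u(x) = \Lambda\, v(x/R)$ produces the factor $\Lambda/R^{\gamma}$ in front, which is exactly \eqref{lin1}. The main point to be careful about is purely bookkeeping: matching the precise form of the tail used in \cite{BLS,GL} with the one introduced in \eqref{tail}, and checking that the scaling of $f$ under dilations yields precisely the exponent $ps/(p-1)$ that appears in the intermediate term. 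Neither is a genuine obstacle, and once this verification is carried out the proposition follows without further work.
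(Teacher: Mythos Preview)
Your proposal is correct and matches the paper's approach: the paper does not actually prove Proposition~\ref{lin} but simply recalls it as a special case of the interior estimates in \cite{BLS,GL}, and your scaling reduction is exactly the standard way to recast those results in the stated normalized form.
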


\noindent
We also recall a technical lemma, contained in the proof of \cite[Theorem 1.2]{ROS}, which will also be used to prove our main result:

\begin{lemma}\label{ros}
Let $\Omega\subset\R^N$ be a bounded domain with $C^{1,1}$-smooth boundary, $v\in L^\infty(\Omega)$, $\gamma\in(0,1)$, $M>0$, $\nu\ge 0$ satisfy

\begin{enumroman}
\item\label{ros1} $\|v\|_{L^\infty(\Omega)}\le M$;
\item\label{ros2} for all $x_1\in\Omega$ s.t.\ ${\rm d}_\Omega(\bar x)=4R$, $v\in C^\gamma(B_{R/8}(x_1))$ with
\[[v]_{C^\gamma(B_{R/8}(x_1))} \le M(1+R^{-\nu});\]
\item\label{ros3} for all $x_0\in\partial\Omega$, $r>0$ small enough
\[\underset{D_r(x_0)}{\rm osc}\,v \le Mr^\gamma.\]
\end{enumroman}
Then $v\in C^\alpha(\overline\Omega)$ with $\alpha=\gamma^2/(\gamma+\nu)\in(0,1)$ and there exists $C=C(M, \gamma, \nu)>0$ s.t.\
\[[v]_{C^{\bar\alpha}(\overline\Omega)}\le C.\]
\end{lemma}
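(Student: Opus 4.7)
The plan is to derive the global $\alpha$-H\"older bound through a case-split between an interior regime and a boundary regime, followed by a weighted geometric interpolation of the two resulting estimates. Fix $x,y\in\Omega$ with, say, ${\rm d}_\Omega(x)\le {\rm d}_\Omega(y)$, set $d={\rm d}_\Omega(x)$, and let $x_0\in\partial\Omega$ realize $|x-x_0|=d$. The split will be driven by the ratio $|x-y|/d$, with the threshold $d/32$ chosen so that, setting $R=d/4$, the condition $|x-y|\le d/32$ is equivalent to $y\in B_{R/8}(x)$, which is the ball on which hypothesis (ii) provides an interior H\"older estimate.

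In the boundary regime $|x-y|>d/32$, both $x$ and $y$ lie in $D_r(x_0)$ with $r=d+|x-y|\le 33|x-y|$, so hypothesis (iii) directly yields $|v(x)-v(y)|\le M(33|x-y|)^\gamma$. Since $\alpha<\gamma$ and $\Omega$ is bounded, this bound is at most $C|x-y|^\alpha$ after absorbing a constant depending on ${\rm diam}(\Omega)$.

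The interior regime $|x-y|\le d/32$ is the crucial step and the source of the exponent $\alpha=\gamma^2/(\gamma+\nu)$. Hypothesis (ii) applied at $x_1=x$ with $R=d/4$ gives
\[|v(x)-v(y)|\le M\bigl(1+R^{-\nu}\bigr)|x-y|^\gamma,\]
which deteriorates as $d\to 0$. Its natural companion comes from hypothesis (iii) at scale $2d$: since $|y-x_0|<2d$,
\[|v(x)-v(y)|\le M(2d)^\gamma.\]
The weighted geometric mean of these two bounds, with exponents $\gamma/(\gamma+\nu)$ and $\nu/(\gamma+\nu)$, cancels the $d$-dependence exactly and yields
\[|v(x)-v(y)|\le CM|x-y|^{\gamma^2/(\gamma+\nu)}=CM|x-y|^\alpha.\]
Continuity on $\overline{\Omega}$ is a byproduct: hypothesis (iii) forces $v$ to have boundary limits as $r\to 0$, and the global interior bound then extends to the closure by uniform continuity.

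I expect the main substantive step to be the interpolation itself and the identification of the critical exponent $\alpha=\gamma^2/(\gamma+\nu)$, which is dictated by balancing the interior growth factor $d^{-\nu}$ against the boundary decay $d^\gamma$. A minor technicality will be handling the ``small enough $r$'' restriction in (iii): for scales above that threshold but below ${\rm diam}(\Omega)$, the trivial oscillation bound $2M$ from (i) suffices up to adjusting constants, so the case analysis above goes through unchanged.
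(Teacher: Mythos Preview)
The paper does not supply its own proof of this lemma: it merely records it as ``a technical lemma, contained in the proof of \cite[Theorem 1.2]{ROS}'' and states it without argument. Your proposal is correct and is precisely the standard argument one extracts from that reference: split according to whether $|x-y|$ is small or large relative to $d={\rm d}_\Omega(x)$, use hypothesis \ref{ros3} alone in the boundary-dominated regime, and in the interior regime take the geometric mean of the estimate from \ref{ros2} (which blows up like $d^{-\nu}$) and the oscillation bound from \ref{ros3} at scale $\sim d$ (which decays like $d^{\gamma}$), with weights $\gamma/(\gamma+\nu)$ and $\nu/(\gamma+\nu)$ chosen to cancel the $d$-dependence. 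The resulting exponent $\gamma^2/(\gamma+\nu)$ falls out exactly as you describe, and your handling of the ``$r$ small enough'' caveat via the trivial bound $2M$ from \ref{ros1} is the right patch. There is nothing to add; this is the intended proof.
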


\noindent
Here follows the optimal regularity result:

\begin{theorem}\label{opt}
Let $\Omega\subset\R^N$ be a bounded domain with $C^{1,1}$-smooth boundary, $f\in L^\infty(\Omega)$, $u\in\w$ be a weak solution of \eqref{dir}. Then, $u\in C^s(\R^N)$ and there exists $C=C(N,p,s,\Omega)>0$ s.t.\
\[\|u\|_{C^s(\R^N)} \le C\|f\|_{L^\infty(\Omega)}^{\frac{1}{p-1}}.\]
\end{theorem}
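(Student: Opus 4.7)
The plan is to apply Lemma \ref{ros} with $v=u$, $\gamma=s$ and $\nu=0$, which yields the exponent $\alpha=s^2/s=s$. The choice $\gamma=s$ is admissible in the interior estimate \eqref{lin1} since $s<1$ and $s<ps/(p-1)$ for every $p>1$. The key preliminary input is the pointwise boundary bound
\[|u(x)| \le C\,\|f\|_{L^\infty(\Omega)}^{1/(p-1)}\,\ds(x)^s\qquad\text{for all }x\in\R^N.\]
I would obtain this by combining the global (small-exponent) H\"older regularity $u\in C^\alpha(\overline\Omega)$ from \cite{IMS,KKP} with Proposition \ref{wcp}, comparing $\pm u$ with $Mw$, where $w$ solves $\fpl w=1$ in $\Omega$, $w=0$ in $\Omega^c$, and enjoys the sharp estimate $w\le C\ds^s$ as a consequence of the interior sphere property of $\Omega$ (a standard nonlocal barrier construction), and $M = \|f\|_{L^\infty}^{1/(p-1)}$. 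In particular, this boundary bound provides hypothesis \ref{ros1} at once.

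For hypothesis \ref{ros2}, fix $x_1\in\Omega$ with $\ds(x_1) = 4R$, so that $B_{4R}(x_1)\subseteq\Omega$, and apply Proposition \ref{lin} on $B_{R/8}(x_1)$ with exponent $s$. Each of the three terms on the right-hand side of \eqref{lin1} is bounded by $C\|f\|_{L^\infty}^{1/(p-1)}R^s$: the local sup norm by the boundary bound, since $\ds\le 5R$ on $B_R(x_1)$; the forcing contribution equals $R^s\cdot R^{s/(p-1)}\|f\|_{L^\infty}^{1/(p-1)}$ and is controlled since $R\le{\rm diam}(\Omega)$; and the tail contribution is handled by inserting the boundary bound and $\ds(x)^s\le C(R^s + |x-x_1|^s)$, which reduces matters to two elementary radial integrals of $\rho^{-1-ps}$ and $\rho^{-1-s}$ on $(R,\infty)$, each producing $R^{s(p-1)}$ and hence, after the $(p-1)$-th root, a prefactor $R^s$. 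Dividing by $R^s$ yields $[u]_{C^s(B_{R/8}(x_1))}\le C\|f\|_{L^\infty}^{1/(p-1)}$ uniformly in $R$, so \ref{ros2} holds with $\nu=0$. Hypothesis \ref{ros3} is immediate: on $D_r(x_0)$ we have $\ds\le r$, hence ${\rm osc}_{D_r(x_0)}u\le C\|f\|_{L^\infty}^{1/(p-1)}r^s$.

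Lemma \ref{ros} then produces $u\in C^s(\overline\Omega)$ with $[u]_{C^s(\overline\Omega)}\le C\|f\|_{L^\infty}^{1/(p-1)}$. The extension to $\R^N$ is automatic, because $u\equiv 0$ on $\Omega^c$: for $x\in\Omega$ and $y\in\Omega^c$, the boundary bound gives $|u(x)-u(y)| = |u(x)| \le C\|f\|_{L^\infty}^{1/(p-1)}\ds(x)^s \le C\|f\|_{L^\infty}^{1/(p-1)}|x-y|^s$. The principal obstacle is the pointwise boundary estimate $|u|\le C\|f\|^{1/(p-1)}\ds^s$ itself: it cannot be extracted from interior estimates alone and genuinely relies on the nonlocal $s$-barrier available from \cite{IMS,KKP}, which in turn exploits the $C^{1,1}$-smoothness of $\partial\Omega$; once that bound is in hand, the remaining verification is a careful but routine scaling computation.
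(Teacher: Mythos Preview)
Your proposal is correct and follows essentially the same route as the paper's proof: apply Lemma \ref{ros} to $v=u$ with $\gamma=s$ and $\nu=0$, feeding in the pointwise bound $|u|\le C\|f\|_{L^\infty}^{1/(p-1)}\ds$, the interior estimate of Proposition \ref{lin} at scale $R={\rm d}_\Omega(x_1)/4$, and the same tail computation via $\ds(x)\le C(R^s+|x-x_1|^s)$. The only cosmetic difference is that the paper quotes \cite[Theorem 4.4]{IMS} directly for the boundary bound, while you outline its derivation through comparison with the torsion function; the preliminary mention of small-exponent global H\"older regularity is unnecessary for that comparison but harmless.
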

\begin{proof}
Assume $u\neq 0$, and by $(p-1)$-homogeneity of $\fpl$ we may as well assume $\|f\|_{L^\infty(\Omega)}=1$. By \cite[Theorem 4.4]{IMS} there exists $C=C(N,p,s,\Omega)>0$ s.t.\ for all $u\in\R^N$
\beq\label{opt1}
|u(x)| \le C\ds(x).
\eeq
We aim at applying Lemma \ref{ros} to $u$, with $\gamma=s$, $\nu=0$ and a convenient $M>0$ depending on $N$, $p$, $s$, and $\Omega$. First, from \eqref{opt1} and boundedness of $\Omega$ we have
\[\|u\|_{L^\infty(\Omega)} \le C{\rm diam}(\Omega)^s,\]
hence $u$ satisfies hypothesis \ref{ros1} of Lemma \ref{ros} with $M=C{\rm diam}(\Omega)^s$. Also, for all $x_0\in\partial\Omega$ and $r\in(0,\rho_\Omega)$ we have by \eqref{opt1}
\[\underset{D_r(x_0)}{\rm osc}\,u \le 2C\sup_{D_r(x_0)}\,u \le 2Cr^s,\]
hence $u$ satisfies \ref{ros3} as well, with a possibly bigger $M$. In order to check hypothesis \ref{ros2}, we fix $x_1\in\Omega$ s.t.\ ${\rm d}_\Omega(x_1)=4R$, set $\gamma=s$ again, and invoke Proposition \ref{lin} (with $U=\Omega$), getting $u\in C^\gamma(B_{R/8}(x_1))$. Besides, from \eqref{lin1}, \eqref{opt1}, and $\|f\|_{L^\infty(\Omega)}=1$ we have
\beq\label{opt2}
[u]_{C^\gamma(B_{R/8}(x_1))} \le \frac{C}{R^s}\big[R^s+R^\frac{ps}{p-1}+{\rm Tail}(u,x_1,R)\big],
\eeq
with $C=C(N,p,s,\Omega)>0$. The second term in the right hand side is estimated as
\[R^\frac{ps}{p-1} \le {\rm diam}(\Omega)^\frac{s}{p-1}R^s.\]
For the tail term, let $\bar x\in\partial\Omega$ be one point minimizing the distance from $x_1$, and for all $x\in\R^N$ we have
\[{\rm d}_\Omega(x) \le |x-\bar x| \le |x-x_1|+4R,\]
which by subadditivity of $t\mapsto t^s$ in $[0,\infty)$ and \eqref{opt1} again implies
\[|u(x)| \le C\big(|x-x_1|^s+R^s).\]
Using the above estimate (and different subadditivity properties depending on $p\ge 2$ or $1<p<2$, respectively) we have
\begin{align*}
\int_{B_r^c(x_1)}\frac{|u(x)|^{p-1}}{|x-x_1|^{N+ps}}\,dx &\le C\int_{B_R^c(x_1)}\frac{|x-x_1|^{(p-1)s}+R^{(p-1)s}}{|x-x_1|^{N+ps}}\,dx \\
&\le C\int_{B_R^c(x_1)}\frac{dx}{|x-x_1|^{N+s}}+CR^{(p-1)s}\int_{B_R^c(x_1)}\frac{dx}{|x-x_1|^{N+ps}} \le \frac{C}{R^s},
\end{align*}
still with $C>0$ depending on $N$, $p$, $s$, and $\Omega$. So
\[{\rm Tail}(u,x_1,R) \le CR^s.\]
Plugging these estimates into \eqref{opt2} we get
\[[u]_{C^s(B_{R/8}(x_1))} \le C(N,p,s,\Omega).\]
So hypothesis \ref{ros2} is satisfied (with $\nu=0$). By Lemma \ref{ros} we get $u\in C^s(\overline\Omega)$ and $[u]_{C^s(\overline\Omega)}\le C$, with $C=C(N,p,s,\Omega)>0$. Recalling \eqref{opt1} once again and $u=0$ in $\Omega^c$, we conclude.
\end{proof}

\subsection{Torsion functions and barriers}

An important auxiliary problem, in the study of nonlocal regularity, is the following Dirichlet problem for the torsion equation:
\beq\label{tor}
\begin{cases}
\fpl v = 1 & \text{in $U$} \\
v = 0 & \text{in $U^c$.}
\end{cases}
\eeq
According to the shape of $U$, the (unique) solution of \eqref{tor} enjoys useful properties, including a Hopf type lemma and a global subsolution property:

\begin{proposition}\label{top}
Let $U\subset\R^N$ be bounded and satisfy the interior sphere property with radius $\rho_U>0$, $v\in W^{s,p}_0(U)\cap C^0(U)$ be the solution of \eqref{tor}. Then:
\begin{enumroman}
\item\label{top1} there exists $C=C(N,p,s)>1$ s.t.\ for all $x\in\R^N$
\[v(x) \ge \frac{1}{C}\rho_U^\frac{s}{p-1}{\rm d}_U^s(x);\]
\item\label{top2} $v$ satisfies weakly in $\R^N$
\[\fpl v \le 1;\]
\item\label{top3} there exists $C=C(N,p,s)>0$ s.t.\ for all $x\in U$
\[v(x) \le C{\rm diam}(U)^\frac{ps}{p-1}.\]
\end{enumroman}
\end{proposition}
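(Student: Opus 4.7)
All three statements reduce to comparisons with torsion functions on Euclidean balls, together with, for (ii), the nonlocal superposition principle (Proposition~\ref{spp}). The key preliminary input is the scaling of the torsion function: if $\Phi$ solves $\fpl\Phi=1$ in $B_\rho(y)$ with $\Phi=0$ outside, then $\Phi(x)=\rho^{ps/(p-1)}\Phi_1((x-y)/\rho)$, where $\Phi_1$ is its unit-ball counterpart; $\Phi_1$ is bounded, and by a barrier argument on the unit ball (Appendix~\ref{appb}) one has $\Phi_1(x)\geq c(1-|x|)^s$ on $B_1$. Combining yields the twin estimates $\Phi(x)\geq c\,\rho^{s/(p-1)}\,d_{B_\rho(y)}(x)^s$ and $\Phi(x)\leq C\,\rho^{ps/(p-1)}$.

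For (i) and (iii) I apply the weak comparison principle (Proposition~\ref{wcp}). For (iii), fix any $x_1\in U$, set $R=\mathrm{diam}(U)$, and let $w$ be the torsion function on $B=B_R(x_1)\supseteq U$; since $\fpl v=\fpl w=1$ in $U$ and $v=0\leq w$ on $U^c$, Proposition~\ref{wcp} gives $v\leq w\leq C\,R^{ps/(p-1)}$ throughout $\R^N$. For (i) I distinguish two cases. If $d_U(x)\geq\rho_U$, then $B_{d_U(x)}(x)\subset U$ and comparison with its torsion function evaluated at the center gives $v(x)\geq c\,d_U(x)^{ps/(p-1)}\geq c\,\rho_U^{s/(p-1)} d_U(x)^s$. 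If $d_U(x)<\rho_U$, I invoke the interior sphere at the nearest boundary point $\bar x$: the tangent interior ball $B_{\rho_U}(y_0)$ (with $y_0=\bar x+\rho_U\nu$) lies in $U$, and a geometric argument (carried out in Appendix~\ref{appb}) shows that $x\in B_{\rho_U}(y_0)$ with $d_{B_{\rho_U}(y_0)}(x)\gtrsim d_U(x)$; comparison with this ball's torsion function at $x$ then yields $v(x)\geq c\,\rho_U^{s/(p-1)} d_U(x)^s$.

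For (ii), I plan to exploit the nonnegativity of $v$ (which follows from Proposition~\ref{wcp} applied with $0$) and the nonlocal nature of $\fpl$. Inside $U$ we have $\fpl v=1$ by construction. For $x\in\R^N\setminus U$, $v(x)=0$ while $v(y)\geq 0$ for $y\in U$, so formally $\fpl v(x)=-2\int_U v(y)^{p-1}|x-y|^{-N-ps}\,dy\leq 0$. To rigorously obtain $\fpl v\leq 1$ weakly in $\R^N$, fix a bounded $V\Supset U$ and $\varphi\in W^{s,p}_0(V)_+$, and use Proposition~\ref{spp} (viewing $v$ as the zero function modified on $U$) to compute $\langle\fpl v,\varphi\rangle$; splitting the resulting expression into integrations over $U$ and $V\setminus U$ gives $\langle\fpl v,\varphi\rangle\leq\int_V\varphi\,dx$. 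The main obstacle I anticipate is the geometric step in (i): under only the interior sphere assumption (without $C^{1,1}$ smoothness of $\partial U$), the nearest-point projection need not place $x$ on the inward normal at $\bar x$, so securing an interior ball of radius $\simeq\rho_U$ at depth $\simeq d_U(x)$ around $x$ requires a careful patching of tangent balls, which is precisely the content of Appendix~\ref{appb}.
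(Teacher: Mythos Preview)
Your overall strategy---comparison with ball torsion functions for (i) and (iii), and a sign argument outside $U$ for (ii)---is correct and matches what the paper imports from \cite{IMS1}. Two remarks. First, your repeated appeals to Appendix~\ref{appb} are misplaced: that appendix proves Proposition~\ref{bar} (the barrier $(1+\lambda\varphi){\rm d}_U^s$), not the lower bound on the unit-ball torsion function nor any geometric patching of tangent interior balls; those inputs come from \cite[Lemmas~2.2--2.3]{IMS1}. Second, your argument for (iii)---comparing $v$ with the torsion function $w$ of a ball $B\supseteq U$ via $\fpl v=\fpl w=1$ in $U$ and $v=0\le w$ in $U^c$---is in fact cleaner than the paper's, which compares in the larger ball $B$ and must therefore invoke (ii) beforehand.

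Your plan for (ii), however, has a gap. Proposition~\ref{spp} requires the domain where you compute $\fpl$ to be compactly contained in the complement of the modification set; here the modification set is $\{v\ne0\}\subseteq\overline U$, so the proposition only yields $\fpl v\le0$ weakly in open sets $\Omega'\Subset(\overline U)^c$. Together with $\fpl v=1$ in $U$ this does not immediately give $\langle\fpl v,\varphi\rangle\le\int\varphi$ for test functions $\varphi\in W^{s,p}_0(V)_+$ that do not vanish near $\partial U$, and ``splitting the resulting expression into integrations over $U$ and $V\setminus U$'' is exactly the step that fails. A correct route is either an approximation argument (cut $\varphi$ off near $\partial U$ and pass to the limit using $\fpl v\in W^{-s,p'}(V)$) or the direct computation in \cite[Lemma~2.4]{IMS1}.
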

\begin{proof}
Properties \ref{top1}, \ref{top2} are proved exactly as in \cite[Lemmas 2.3, 2.4]{IMS1}. Regarding \ref{top3}, let ${\rm diam}(U)=2R>0$ and find $x_0\in\R^N$ s.t.\ $U\subseteq B_R(x_0)$. Further, let $u_R\in W^{s,p}_0(B_R(x_0))$ be the solution of the torsion problem
\[\begin{cases}
\fpl u_R = 1 & \text{in $B_R(x_0)$} \\
u_R = 0 & \text{in $B_R^c(x_0)$.}
\end{cases}\]
Arguing as in \cite[Lemma 2.2]{IMS1} we find $C=C(N,p,s)>1$ s.t.\ for all $x\in\R^N$
\[\frac{1}{C}R^\frac{s}{p-1}{\rm d}_{B_R(x_0)}^s(x) \le u_R(x) \le CR^\frac{s}{p-1}{\rm d}_{B_R(x_0)}^s(x),\]
By \ref{top2} we have
\[\begin{cases}
\fpl v \le 1 = \fpl u_R & \text{in $B_R(x_0)$} \\
v = 0 \le u_R & \text{in $B_R^c(x_0)$.}
\end{cases}\]
By Proposition \ref{wcp}, we have $v\le u_R$ in $\R^N$. In particular, for all $x\in U$ we have
\[v(x) \le u_R(x) \le CR^\frac{s}{p-1}{\rm d}_{B_R(x_0)}^s(x) \le CR^\frac{ps}{p-1},\]
which implies \ref{top3}.
\end{proof}

\noindent
Finally, we recall some technical results which play a crucial role in the construction of barriers. The first comes from \cite{IMS1}:

\begin{proposition}\label{ptz}
{\rm\cite[Lemma 4.1]{IMS1}} Let $U\subset\R^N$ have a $C^{1,1}$-smooth boundary, $0\in\partial U$, $R\in(0,\rho_U/4)$, and $x_0\in B_{R/2}\cap U$. Then, there exist $v\in W^{s,p}_0(U)\cap C^0(\R^N)$, $C=C(N,p,s,U)>1$ s.t.\
\begin{enumroman}
\item $|\fpl v|\le CR^{-s}$ in $B_{2R}\cap U$;
\item $|v|\le CR^s$ in $B_{2R}\cap U$;
\item $v\ge C^{-1}{\rm d}_U^s$ in $(B_R\cap U^c)$;
\item $v(x_0)=0$.
\end{enumroman}
\end{proposition}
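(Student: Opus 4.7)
\medskip

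\noindent
\textbf{Proof proposal.} The plan is to construct $v$ as a perturbation of a rescaled model barrier adapted to a fixed interior tangent ball at $0$. First I would use the $C^{1,1}$-regularity of $\partial U$ together with the hypothesis $R<\rho_U/4$ to select a point $y_R\in U$ with $B_{2R}(y_R)\subseteq U$ and $0\in\partial B_{2R}(y_R)$ (so $y_R$ lies on the inner normal at $0$). This reduces the construction to the "locally convex" setting of an interior ball of radius comparable to $R$.

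\smallskip

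\noindent
Next, I would introduce a base barrier $w$ obtained from the torsion equation on a suitable annular-type region $V$ with $D_{3R/2}\cap U \subseteq V \Subset U$, solving $\fpl w = R^{-s}$ in $V$ with $w=0$ on $V^c$; scaling by $R$ and invoking Proposition \ref{top} gives simultaneously the upper bound $w\le CR^s$ on $V$ (hence on $B_{2R}\cap U$), the lower bound $w\ge C^{-1}R^{s/(p-1)}\mathrm{d}_V^s \ge C^{-1}\mathrm{d}_U^s$ in the portion of $V$ touching $\partial U$, and the global subsolution identity $\fpl w \le R^{-s}$. A matching lower estimate on $\fpl w$ in $B_{2R}\cap U$ would follow by invoking the exterior sphere property at points of $\partial V$ together with comparison against the explicit torsion function on a ball, exactly in the spirit of \cite[Lemmas 2.2--2.4]{IMS1}. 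This already yields a candidate satisfying (i), (ii), (iii).

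\smallskip

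\noindent
To enforce (iv), I would modify $w$ on a small neighborhood of $x_0$ contained in $U\setminus\overline{B_R}$ (possible since $x_0\in B_{R/2}\cap U$ and we can reach the complementary region through the interior). Concretely, choose a radius $r\sim R$ and a function $\varphi$ supported in $B_{r}(x_0)$ with $\varphi(x_0)=w(x_0)$ and $0\le \varphi\le w(x_0)$. Set
\[
v(x) = \begin{cases} w(x)-\varphi(x) & \text{in }B_r(x_0),\\ w(x) & \text{otherwise.}\end{cases}
\]
Then $v(x_0)=0$, $v\in W^{s,p}_0(U)\cap C^0(\R^N)$, and properties (ii), (iii) are preserved since the perturbation is supported away from $\partial U$ and from $B_R\cap U^c$. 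For (i), I would apply the nonlocal superposition principle of Proposition \ref{spp} on $B_{2R}\cap U$ (which is disjoint from the support of the perturbation if $x_0$ is further arranged to lie in a controlled region, otherwise on the complement of $\overline{B_r(x_0)}$), writing
\[
\fpl v(x) = \fpl w(x) + 2\int_{B_r(x_0)}\frac{(w(x)-v(y))^{p-1}-(w(x)-w(y))^{p-1}}{|x-y|^{N+ps}}\,dy.
\]
The extra integral is estimated using $|w-v|\le w(x_0)\le CR^s$, $|x-y|\gtrsim R$ for $x\in B_{2R}\cap U$ outside a neighborhood of $x_0$, and the elementary inequality $|a^{p-1}-b^{p-1}|\le C(|a|+|b|)^{p-2}|a-b|$ (for $p\in(1,2)$ the convexity is reversed and one uses $|a^{p-1}-b^{p-1}|\le C|a-b|^{p-1}$). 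In either range of $p$, this produces an $O(R^{-s})$ perturbation.

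\smallskip

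\noindent
The main obstacle I anticipate is the simultaneous control of the two-sided bound on $\fpl v$ in (i): the torsion-based construction naturally yields one-sided subsolution inequalities, while the upper bound $-CR^{-s}\le \fpl v$ requires a comparison against a suitable supersolution. I expect to handle this by sandwiching $w$ between two explicit torsion barriers on balls of radii comparable to $R$ aligned with the interior sphere at $0$, exploiting their pointwise equivalence with $R^{s/(p-1)}\mathrm{d}^s$ to transfer estimates. The singular range $p\in(1,2)$ does not enter the barrier construction in an essential way since the argument is geometric rather than monotonicity-based; only the estimate of the perturbation term in $\fpl v$ must be carried out with the appropriate subadditivity inequality for $t\mapsto t^{p-1}$.
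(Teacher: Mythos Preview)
The paper does not prove this proposition; it is quoted from \cite[Lemma~4.1]{IMS1} without argument, so there is no proof here to compare against. Your sketch, however, has two structural gaps that would prevent it from going through.

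First, condition (iii) must be read as $v\ge C^{-1}{\rm d}_U^s$ on $(B_R\cap U)^c$ (this is how it is invoked in \eqref{ubn2} of Lemma~\ref{ubn}), which forces $v$ to dominate a positive multiple of ${\rm d}_U^s$ on \emph{all} of $U\setminus B_R$. Your torsion function $w$ on a region $V\Subset U$ vanishes identically on $U\setminus V$, so (iii) fails there; and the chain $R^{s/(p-1)}{\rm d}_V^s\ge C^{-1}{\rm d}_U^s$ goes the wrong way, since $V\subset U$ gives ${\rm d}_V\le{\rm d}_U$ while $R<1$. The base barrier therefore has to live on all of $U$ --- in the original construction it is essentially ${\rm d}_U^s$ itself, for which $|\fpl{\rm d}_U^s|$ is bounded near $\partial U$ --- not a compactly supported torsion solution.

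Second, the step enforcing (iv) is geometrically inconsistent: you place the support of $\varphi$ in $U\setminus\overline{B_R}$, but $x_0\in B_{R/2}$, so no neighborhood of $x_0$ can lie outside $B_R$. Any modification achieving $v(x_0)=0$ must take place \emph{inside} $B_{R/2}\subset B_{2R}\cap U$, exactly where the two-sided bound (i) is demanded. Proposition~\ref{spp} is then unavailable, since it requires the evaluation point to be separated from the support of the modification; your displayed superposition identity is only valid for $x$ at distance $\gtrsim r$ from $x_0$, leaving $\fpl v$ uncontrolled on $B_r(x_0)$. Getting (i) at and near $x_0$ after forcing $v(x_0)=0$ needs a genuinely different mechanism --- in \cite{IMS1} this is done via a multiplicative deformation of ${\rm d}_U^s$ through a diffeomorphism (in the spirit of Proposition~\ref{bar}) rather than an additive cut-off plus superposition.
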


\noindent
The second is a slightly modified version of \cite[Lemma 3.4]{IMS1}, whose proof is postponed to Appendix \ref{appb} as it is only loosely related to the main subject of the present work:

\begin{proposition}\label{bar}
Let $U\subset\R^N$ have a $C^{1,1}$-smooth boundary, $0\in\partial U$, $\varphi\in C^\infty_c(B_1)$ be s.t.\ $0\le\varphi(x)\le 1$ for all $x\in B_1$, and for all $\lambda\in\R$, $R>0$, and $x\in\R^N$ set
\[v_\lambda(x) = \Big(1+\lambda\varphi\Big(\frac{x}{R}\Big)\Big){\rm d}_U^s(x).\]
Then, there exist $\rho'_U, \lambda_0, C>0$ depending on $N$, $p$, $s$, $U$ and $\varphi$, s.t.\ for all  $R\le\rho'_U$,  $|\lambda|\le\lambda_0$  
\[|\fpl v_\lambda| \le C\Big(1+\frac{|\lambda|}{R^s}\Big)\]
weakly in $B_{\rho'_U}\cap U$.
\end{proposition}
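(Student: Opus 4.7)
The plan is to bound $|\fpl v_\lambda|$ by combining two ingredients. First, a baseline barrier estimate $|\fpl {\rm d}_U^s| \le C$ weakly in a neighborhood of $0$ in $U$, which holds in any $C^{1,1}$ domain for any $p>1$ and underlies Proposition \ref{ptz} (cf.\ \cite[Lemma 3.3]{IMS1}). Second, a perturbation estimate $|\fpl v_\lambda - \fpl v_0| \le C|\lambda|/R^s$ weakly in $B_{\rho'_U}\cap U$, where $v_0 := {\rm d}_U^s$. The combination of these two bounds immediately yields the claim, so the content is entirely in the second step. Notice the scaling is natural: the perturbation $w_\lambda := \lambda\,\varphi(\cdot/R)\,{\rm d}_U^s$ is supported in $B_R$ and carries oscillations at scale $R$, which, when inserted into the $(N+ps)$-order kernel and weighted by the $(p-1)$-th power, produces precisely a factor $R^{-s}$.

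For the perturbation step, fix $x \in B_{\rho'_U} \cap U$ and write
\[\fpl v_\lambda(x) - \fpl v_0(x) = 2\,\mathrm{p.v.}\!\int_{\R^N} \frac{A_\lambda(x,y)}{|x-y|^{N+ps}}\,dy,\]
where $A_\lambda(x,y) = (v_\lambda(x)-v_\lambda(y))^{p-1}-(v_0(x)-v_0(y))^{p-1}$ and the difference of the arguments equals $\lambda\,\Psi_R(x,y)$, with $\Psi_R(x,y) := \varphi(x/R){\rm d}_U^s(x) - \varphi(y/R){\rm d}_U^s(y)$. By an elementary pointwise identity of the type collected in Appendix \ref{appa}, $|A_\lambda|$ is controlled by the interpolation of a Lipschitz-type bound $C_p\,|\lambda\Psi_R|\,(|v_0(x)-v_0(y)|+|\lambda\Psi_R|)^{p-2}$ and a H\"older-type bound $2^{2-p}|\lambda\Psi_R|^{p-1}$, the latter being the singular-case substitute for the Lipschitz bound used for $p\ge 2$ in \cite[Lemma 3.4]{IMS1}. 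One then uses whichever is more convenient according to the relative size of the baseline increment $v_0(x)-v_0(y)$ and the perturbation increment $\lambda\Psi_R$.

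The remaining task is to split $\R^N$ into the short-range region $\{|x-y|\le R\}$ and its complement and estimate both contributions. On the long range the uniform bound $|\Psi_R|\le CR^s$ combined with the explicit decay $\int_R^\infty r^{-1-ps}\,dr = (ps)^{-1}R^{-ps}$ gives the desired $|\lambda|/R^s$ scaling via the H\"older-type estimate. On the short range we exploit the uniform H\"older bound $|\Psi_R(x,y)|\le C|x-y|^s$ for $R\le\rho'_U$ together with the Lipschitz-type bound, reducing the p.v.\ integral to the already-established finiteness of $\fpl v_0(x)$; a careful bookkeeping of the $\lambda$-dependence through the $(p-1)$-th power yields again $C|\lambda|/R^s$. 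Summing both contributions and combining with the baseline estimate for $\fpl v_0$ closes the argument, with $\rho'_U$ chosen so that ${\rm d}_U\in C^{1,1}$ in $B_{2\rho'_U}\cap U$.

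The main obstacle is the dichotomy in the singular regime $p\in(1,2)$: the factor $(|v_0(x)-v_0(y)|+|\lambda\Psi_R|)^{p-2}$ in the Lipschitz-type bound for $A_\lambda$ blows up where $v_0(x)\approx v_0(y)$ and $\lambda\Psi_R\approx 0$. This is harmless, hence invisible, for $p\ge 2$, but in the present regime it forces the splitting into the short and long range regimes and the interpolation between the two pointwise bounds for $A_\lambda$; the final form $C(1+|\lambda|/R^s)$ records exactly the baseline plus perturbative contributions of this splitting.
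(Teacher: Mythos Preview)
Your strategy is genuinely different from the paper's. The paper does \emph{not} proceed by perturbation from ${\rm d}_U^s$; instead it constructs a $C^{1,1}$-diffeomorphism $\Phi_\lambda$ of $\R^N$ (built from a boundary-flattening map composed with a $\lambda$-dependent stretching in the normal variable) such that $v_\lambda=(\Phi_\lambda^N)_+^s$ in a neighbourhood of $0$. After the change of variables $\tilde y=\Phi_\lambda(y)$, the leading contribution to $\fpl v_\lambda$ becomes the half-space integral $\mathrm{p.v.}\int(( \tilde x_N)_+^s-(\tilde y_N)_+^s)^{p-1}|D\Phi_\lambda^{-1}(\tilde x)(\tilde x-\tilde y)|^{-N-ps}\,d\tilde y$, which vanishes by an explicit symmetry computation (cf.\ \cite[Lemma 3.2]{IMS}). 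All the work then goes into bounding the remainder, which is absolutely integrable and controlled by $|D^2\Phi_\lambda|\lesssim |\lambda|R^{-1}\chi_{B_R}+1$; this is where the factor $|\lambda|/R^s$ arises.

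Your sketch has a real gap in the short-range step, and I do not see how to close it with the tools you list. Using the H\"older-type bound $|A_\lambda|\le 2^{2-p}|\lambda\Psi_R|^{p-1}$ together with $|\Psi_R(x,y)|\le C|x-y|^s$ yields $|A_\lambda|\le C|\lambda|^{p-1}|x-y|^{s(p-1)}$, and then
\[
\int_{B_R(x)}\frac{|A_\lambda(x,y)|}{|x-y|^{N+ps}}\,dy\le C|\lambda|^{p-1}\int_0^R r^{-s-1}\,dr=+\infty,
\]
so this bound cannot control the short-range piece for any $p$ or $s$. The Lipschitz-type bound $|A_\lambda|\le C|\lambda\Psi_R|\,(|v_0(x)-v_0(y)|+|\lambda\Psi_R|)^{p-2}$ is no better: along tangential directions to $\partial U$ one has $v_0(x)-v_0(y)=0$ and $\Psi_R(x,y)=\big(\varphi(x/R)-\varphi(y/R)\big){\rm d}_U^s(x)$, so the factor $(|v_0(x)-v_0(y)|+|\lambda\Psi_R|)^{p-2}$ blows up there with no compensating smallness. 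Your phrase ``reducing the p.v.\ integral to the already-established finiteness of $\fpl v_0(x)$'' is circular: splitting $\mathrm{p.v.}\int A_\lambda$ back into $\mathrm{p.v.}\int(v_\lambda(x)-v_\lambda(y))^{p-1}$ minus $\mathrm{p.v.}\int(v_0(x)-v_0(y))^{p-1}$ just restates the quantity you are trying to bound. What is missing is precisely the cancellation mechanism that the diffeomorphism approach provides for free: after straightening, the principal part of the singular integral has an exact odd symmetry in $\tilde y-\tilde x$ and therefore vanishes in the p.v.\ sense, leaving only an absolutely convergent error. In your direct approach no such symmetry is available, and a pointwise bound on $A_\lambda$ cannot substitute for it.
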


\section{Lower bound}\label{sec3}

\noindent
In this section we prove a lower bound for supersolutions of \eqref{dir}-type problems in domains of the type $D_R$, globally bounded from below by a positive multiple of the function $\ds$, which corresponds to the weak Harnack inequality \eqref{hil} seen in Section \ref{sec1}. The peculiar form of the involved constant $C(m,K,R)$ can be inferred via the following representative example, in which we assume $K\sim 1$ for simplicity:

\begin{example}\label{cmr}
Let $\Omega=B_1$, $x_0\in\partial\Omega$, $0<m<\mu$, and set for all $x\in\R^N$
\[u(x) = \begin{cases}
m\ds (x) & \text{if $x\in\tilde B_{x_0,R}^c$} \\
\mu\ds(x) & \text{if $x\in\tilde B_{x_0,R}$.}
\end{cases}\]
Then, the left hand side of \eqref{hil} vanishes, so that for any choice ensuring  $\fpl u\gtrsim -1$ in $D_R(x_0)$,  the optimal constant $C(m,1,R)$ in \eqref{hil} must fulfill 
\[C(m,1,R) \simeq L(u,x_0,m,R).\]
For $R>0$ small enough, an explicit computation yields
\[L(u,x_0,m,R)\simeq \mu-m, \qquad \inf_{D_R(x_0)}\fpl u\gtrsim m^{p-1}-\frac{\mu-m}{\mu^{2-p}R^s}.\]
We can choose $\mu>m$ according to the following cases:
\begin{itemize}[leftmargin=1cm]
\item[$(a)$] If $m\gtrsim 1$, then we choose $\mu-m\simeq mR^s$, so that for small $R$ we have in $D_R(x_0)$
\[\fpl u \gtrsim m^{p-1}-m\mu^{p-2} \gtrsim -1.\]
Therefore, if \eqref{hil} holds true, the optimal constant must satisfy $C(m,1,R)\simeq mR^s$.
\item[$(b)$] If $R^{\frac{s}{p-1}}\lesssim m\lesssim 1$, then we let $\mu-m\simeq m^{2-p}R^{s}$. Then $\mu\gtrsim m$ and, since $p-2<0$, we have in $D_R(x_0)$
\[\fpl u \gtrsim -m^{2-p}\mu^{p-2} \gtrsim -1.\]
As before, this implies $C(m,1,R)\simeq m^{2-p}R^s$.
\item[$(c)$] Finally, if $m\lesssim R^{\frac{s}{p-1}}\lesssim 1$, then we choose $\mu-m\simeq R^{\frac{s}{p-1}}$, so that $\mu\gtrsim R^{\frac{s}{p-1}}$ and hence in $D_R(x_0)$
\[\fpl u \gtrsim - R^{-s} R^{\frac{s}{p-1}}\mu^{p-2} \gtrsim -1\]
In this case we thus have $C(m,1,R)\simeq R^{\frac{s}{p-1}}$.
\end{itemize}
In conclusion, to cover these three possible scenarios, we expect the constant $C(m,1,R)$ in \eqref{hil}  to have the form
\[C(m,1,R) \simeq R^{\frac{s}{p-1}}+m^{2-p} R^s +mR^s.\]
\end{example}

\noindent
For simplicity we will assume henceforth $0\in\partial\Omega$ and we choose $0$ as the center of balls. We must distinguish two cases according to the size of the excess (defined in \eqref{lum}), so we prefer to use different symbols in the differential inequalities to keep track of the differences.
\vskip2pt
\noindent
We begin with a lower bound for supersolutions with large excess:

\begin{lemma}\label{lbl}
Let $R>0$ be small enough depending on $N$, $p$, $s$, and $\Omega$, and let $u\in\widetilde{W}^{s,p}(D_R)$, $m,K> 0$ satisfy
\beq\label{lbl1}
\begin{cases}
\fpl u \ge -K & \text{in $D_R$} \\
u \ge m\ds & \text{in $\R^N$.}
\end{cases}
\eeq
Then, there exist $\gamma_1,C_1>1>\sigma_1>0$, depending on $N$, $p$ and $s$, s.t.\ if $L(u,m,R)\ge m\gamma_1$ then
\beq\label{lbl2}
\inf_{D_{R/2}}\Big(\frac{u}{\ds}-m\Big) \ge \sigma_1L(u,m,R)-C_1(KR^s)^\frac{1}{p-1}.
\eeq
\end{lemma}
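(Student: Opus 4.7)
Denote $\ell := L(u, 0, m, R)$. By assumption $\ell \ge m\gamma_1$ with $\gamma_1 > 1$ to be fixed. Since $u \ge m\ds$ globally, the integrand $(u/\ds - m)^{p-1}$ is pointwise nonnegative on $\tilde B_{0,R}$, and a Chebyshev-type inequality applied to the averaged $L^{p-1}$-norm produces a subset $E \subseteq \tilde B_{0,R}$ with $|E| \gtrsim R^N$ on which $u/\ds - m \gtrsim \ell$. Since $\ds \simeq R$ on $\tilde B_{0,R}$, this reads $u \gtrsim R(m+\ell)$ on $E$.

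The plan is to build an explicit lower barrier $V$ and conclude via the weak comparison principle (Proposition \ref{wcp}). The comparison will be performed on the regularized set $A_R$ from Lemma \ref{arp}, which contains $D_{3R/4} \supset D_{R/2}$ and enjoys the interior sphere property with radius $\simeq R$. Concretely, the barrier will have the shape
\[V(x) = \Big(m + \bigl(\sigma_1 \ell - C_1 (KR^s)^{\frac{1}{p-1}}\bigr) \psi(x)\Big)\ds(x),\]
where $\psi$ is a smooth cutoff compactly supported in $A_R$ with $\psi \equiv 1$ on $D_{R/2}$, built from the bump $\varphi(\cdot/R)$ of Proposition \ref{bar} composed with a further cutoff against $\ds/R$ so that $V = m\ds \le u$ in $A_R^c$. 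The differential inequality $\fpl V \le -K$ in $A_R$ is the core estimate: the principal contribution is handled via Proposition \ref{bar} applied to $(m + \sigma_1 \ell \,\psi)\ds$, while the $(KR^s)^{1/(p-1)}$ corrector is absorbed using the torsion function of $A_R$ and Proposition \ref{top}. The fact that a lift of size $\sigma_1 \ell$ can actually be sustained inside $D_{R/2}$ comes from the superposition principle (Proposition \ref{spp}): the mass concentration of $u$ at level $(m+\ell)R$ on $E \subseteq \tilde B_{0,R}$, inserted into the kernel $|x-y|^{-(N+ps)}$ with $|x-y| \simeq R$ for $x \in D_{R/2}$, provides a positive boost to $\fpl u$ of order $\ell^{p-1}/R^s$ weighted by appropriate shape factors. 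Once (a) $\fpl V \le -K \le \fpl u$ in $A_R$ and (b) $V \le u$ in $A_R^c$ have been checked, Proposition \ref{wcp} gives $V \le u$ in $\R^N$, whence the desired Harnack-type bound on $D_{R/2}$.

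The hard part, genuinely new to the singular regime $1 < p < 2$, is the concavity of $t \mapsto t^{p-1}$: the convexity-based manipulations of the nonlocal kernel that controlled the analogue of Proposition \ref{bar} for $p \ge 2$ in IMS1 now reverse sign, so the estimate of the positive excess contribution coming from $E$ requires the concavity-type inequalities collected in Appendix \ref{appa}. This in turn forces a delicate matching of the parameters $\sigma_1$ (small), $\gamma_1$ (large), and $R$ (implicitly small): the bound on $\fpl V$ acquires a factor of the form $(m+\sigma_1 \ell)^{p-2}$ with negative exponent, so the inequality $\fpl V \le -K$ can only be closed when the lift $\sigma_1 \ell$ dominates $m$, which is precisely what the hypothesis $\ell \ge m\gamma_1$ guarantees.
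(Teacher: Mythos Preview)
There is a genuine gap in your plan. Your barrier $V=(m+\sigma_1\ell\,\psi)\ds$ is meant to be controlled via Proposition~\ref{bar}, but that proposition is perturbative: it applies to $(1+\lambda\varphi(\cdot/R))\ds$ only for $|\lambda|\le\lambda_0$ with $\lambda_0$ a fixed small constant. In your construction the effective parameter is $\lambda=\sigma_1\ell/m$, and in the large-excess regime you only have the \emph{lower} bound $\ell/m\ge\gamma_1$; there is no upper bound on $\ell/m$, so $\lambda$ can be arbitrarily large and Proposition~\ref{bar} simply does not apply. No choice of $\sigma_1,\gamma_1$ fixes this. This is exactly why the paper splits into two lemmas: Proposition~\ref{bar} is the right tool for the \emph{small}-excess Lemma~\ref{lbs}, where the hypothesis $\ell\le m\gamma$ keeps $\lambda$ bounded.

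For the large-excess case at hand the paper abandons $\ds$-type barriers altogether and takes instead $w_\lambda=\lambda R^{-s/(p-1)}v+\chi_{\tilde B_R}u$, where $v$ is the torsion function of $A_R$. By homogeneity $\fpl(\lambda' v)=(\lambda')^{p-1}$ for \emph{any} $\lambda'>0$, so there is no smallness constraint on the lift. The superposition principle is used to graft $u|_{\tilde B_R}$ onto this barrier (not to modify $\fpl u$, which is fixed), and the resulting integral over $\tilde B_R$ is estimated directly via the elementary inequality \eqref{ei1}, yielding a negative term of order $L^{p-1}/R^s$; choosing $\lambda\sim L$ then gives $\fpl w_\lambda\le -K$. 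Your Chebyshev step to extract a subset $E$ is unnecessary, and your discussion of $(m+\sigma_1\ell)^{p-2}$ factors and the ``delicate matching'' really describes the small-excess argument of Lemma~\ref{lbs}; the proof of Lemma~\ref{lbl} is essentially insensitive to whether $p<2$ or $p\ge 2$.
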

\begin{proof}
Let $R\in(0,\rho_\Omega/4)$, with $\rho_\Omega$ defined as in Section \ref{sec2}. We define the regularized domain $A_R$ as in \eqref{ard} (with $x_0=0$). Consider the torsion problem
\beq\label{lbl3}
\begin{cases}
\fpl v = 1 & \text{in $A_R$} \\
v = 0 & \text{in $A_R^c$.}
\end{cases}
\eeq
By Proposition \ref{top} \ref{top1}, the solution $v\in W^{s,p}_0(A_R)$ of \eqref{lbl3} satisfies for all $x\in\R^N$
\[v(x) \ge \frac{1}{C}\rho_{A_R}^\frac{s}{p-1}{\rm d}_{A_R}^s(x)\]
for $C=C(N, p, s)>0$.
It has already been pointed out that $\rho_{A_R}\ge R/16$. By Lemma \ref{arp} \ref{arp2},   ${\rm d}_\Omega\le 6\, {\rm d}_{A_R}$ in $D_{R/2}$, so we can find $c=c(N,p,s)>0$ s.t.\ for a.e.\ $x\in D_{R/2}$
\beq\label{lbl4}
v(x) \ge cR^\frac{s}{p-1}\ds(x).
\eeq
Besides, by Proposition \ref{top} \ref{top2} we have in all of $\R^N$
\[\fpl v \le 1.\]
Fix $\lambda>0$ (to be determined later) and set for all $x\in\R^N$
\[w_\lambda(x) = \frac{\lambda}{R^\frac{s}{p-1}}v(x)+\chi_{\tilde B_R}(x)u(x).\]
By Proposition \ref{spp} and the inequality above, we have $w_\lambda\in\widetilde{W}^{s,p}(D_R)$ and for all $x\in D_R$
\[\fpl w_\lambda(x) \le \frac{\lambda^{p-1}}{R^s}+2\int_{\tilde B_R}\frac{(w_\lambda(x)-u(y))^{p-1}-w_\lambda^{p-1}(x)}{|x-y|^{N+ps}}\,dy.\]
We need to estimate the integrand above. To this purpose, we use Proposition \ref{top} \ref{top3} and the construction of $w_\lambda$ to see that for all $x\in D_R$
\[w_\lambda(x) = \frac{\lambda}{R^\frac{s}{p-1}}v(x) \le C\lambda R^s.\]
Now fix $x\in D_R$, $y\in\tilde B_R$. Using \eqref{ei1} with $a=w_\lambda(x)$, $b=u(y)$ we get
\begin{align*}
(w_\lambda(x)-u(y))^{p-1}-w_\lambda^{p-1}(x) &\le w_\lambda^{p-1}(x)-u^{p-1}(y) \\
&\le C\lambda^{p-1}R^{(p-1)s}-(u(y)-m\ds(y))^{p-1}.
\end{align*}
Besides we have $R/2\le|x-y|\le 3R$ and $3R/2\le{\rm d}_\Omega (y)\le 2R$ (see \eqref{geo}), so continuing from the estimate of $\fpl w_\lambda(x)$ and recalling \eqref{lbl1} we get for all $x\in D_R$
\begin{align*}
\fpl w_\lambda(x) &\le \frac{\lambda^{p-1}}{R^s}+C\int_{\tilde B_R}\frac{(\lambda R^s)^{p-1}}{|x-y|^{N+ps}}\,dy-\frac{1}{C}\int_{\tilde B_R}\frac{(u(y)-m\ds(y))^{p-1}}{|x-y|^{N+ps}}\,dy \\
&\le \frac{\lambda^{p-1}}{R^s}+C\frac{\lambda^{p-1}}{R^{N+s}}|\tilde B_R|-\frac{1}{C}\frac{R^{(p-1)s}}{R^{N+ps}}\int_{\tilde B_R}\Big(\frac{u(y)}{\ds(y)}-m\Big)^{p-1}\,dy \\
&\le (C+1)\frac{\lambda^{p-1}}{R^s}-\frac{1}{CR^s}\dashint_{\tilde B_R}\Big(\frac{u(y)}{\ds(y)}-m\Big)^{p-1}\,dy \\
&\le \Big[(C+1)\lambda^{p-1}-\frac{L(u,m,R)^{p-1}}{C}\Big]\frac{1}{R^s},
\end{align*}
with $C=C(N,p,s)>1$. Note that $\lambda>0$ is arbitrary so far. Now fix it s.t.\
\[(C+1)\lambda^{p-1} = \frac{L(u,m,R)^{p-1}}{2C},\]
so we have for all $x\in D_R$
\beq\label{lbl5}
\fpl w_\lambda(x) \le -\frac{L(u,m,R)^{p-1}}{2CR^s}.
\eeq
Let $c>0$ be the constant in \eqref{lbl4}, and set
\[\sigma_1 =\frac{1}{\gamma_1} = \frac{c}{2(2C^2+C)^\frac{1}{p-1}}, \qquad C_1 = \sigma_1(2C^2+C)^\frac{1}{p-1}.\]
So we have $C_1,\gamma_1>1>\sigma_1>0$, depending on $N$, $p$, $s$. We distinguish two cases:
\begin{itemize}[leftmargin=1cm]
\item[$(a)$] If $L(u,m,R)<(2CKR^s)^\frac{1}{p-1}$, then by choice of $\sigma_1$, $C_1$ the right hand side of \eqref{lbl1} is negative, and hence for all $x\in D_{R/2}$
\[\frac{u(x)}{\ds(x)}-m \ge 0 > \sigma_1L(u,m,R)-C_1(KR^s)^\frac{1}{p-1}\]
(even if $L(u,m,R)<m\gamma_1$).
\item[$(b)$] If $L(u,m,R)\ge(2CKR^s)^\frac{1}{p-1}$, then by \eqref{lbl5} we have
\[\begin{cases}
\fpl w_\lambda \le -K \le \fpl u & \text{in $D_R$} \\
w_\lambda = \chi_{\tilde B_R}u \le u & \text{in $D_R^c$.}
\end{cases}\]
By Proposition \ref{wcp} we have $w_\lambda\le u$ in $\R^N$. Therefore, by the choices of $\lambda$, $\sigma_1$, $\gamma_1$, and $C_1$, we get for all $x\in D_{R/2}$
\begin{align*}
u(x) &\ge \frac{\lambda}{R^\frac{s}{p-1}}v(x) \\
&\ge \frac{c}{(2C^2+C)^\frac{s}{p-1}}L(u,m,R)\ds(x) = \Big(\sigma_1+\frac{1}{\gamma_1}\Big)L(u,m,R)\ds(x).
\end{align*}
Using the assumption $L(u,m,R)\ge m\gamma_1$ we have for all $x\in D_{R/2}$
\[u(x) \ge \big(m+\sigma_1L(u,m,R)\big)\ds(x),\]
which in turn implies
\[\frac{u(x)}{\ds(x)}-m \ge \sigma_1L(u,m,R).\]
\end{itemize}
In both cases we deduce \eqref{lbl2}.
\end{proof}

\noindent
Next we prove a lower bound for supersolutions with small excess. Due to the singular nature of the operator, we get a slightly different inequality involving a free parameter $\gamma>0$ and two different positive powers of $m$:

\begin{lemma}\label{lbs}
Let $R>0$ be small enough depending on $N$, $p$, $s$, and $\Omega$, and let $u\in\widetilde{W}^{s,p}(D_R)$, $m,H> 0$ satisfy
\beq\label{lbs1}
\begin{cases}
\fpl u \ge -H & \text{in $D_R$} \\
u \ge m\ds & \text{in $\R^N$.}
\end{cases}
\eeq
Then, for all $\gamma>0$ there exist $C_\gamma>1>\sigma_\gamma>0$, depending on $N$, $p$, $s$, $\Omega$, and $\gamma$, s.t.\ if $L(u,m,R)\le m\gamma$ then
\beq\label{lbs2}
\inf_{D_{R/2}}\Big(\frac{u}{\ds}-m\Big) \ge \sigma_\gamma L(u,m,R)-C_\gamma(m+Hm^{2-p})R^s.
\eeq
\end{lemma}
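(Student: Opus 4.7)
The strategy closely parallels that of Lemma \ref{lbl}, but in the \emph{small excess} regime $L(u,m,R)\le m\gamma$ the torsion barrier of $A_R$ is replaced by the perturbed-distance barrier $v_\lambda(x)=(1+\lambda\varphi(x/R))\ds(x)$ provided by Proposition \ref{bar}, with $\varphi\in C^\infty_c(B_1)$ fixed so that $\varphi\equiv 1$ on $B_{1/2}$. Since $L\lesssim m$, the existing lower bound $u\ge m\ds$ has the right order of magnitude and we only need to perturb it multiplicatively: the natural competitor is
\[
w_\lambda = m v_\lambda+\chi_{\tilde B_R}(u-m\ds),
\]
which coincides with $u$ on $\tilde B_R$ (where $\varphi(\cdot/R)=0$) and with $m\ds$ elsewhere in $D_R^c$, hence satisfies $w_\lambda\le u$ on $D_R^c$.

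By Proposition \ref{bar} and the $(p-1)$-homogeneity of $\fpl$, one has $|\fpl(mv_\lambda)|\le C m^{p-1}(1+\lambda R^{-s})$ in $D_R$ for $R$ and $\lambda$ small. Proposition \ref{spp} then expresses $\fpl w_\lambda$ in $D_R$ as this bulk term plus the nonlocal correction
\[
2\int_{\tilde B_R}\frac{(m v_\lambda(x)-u(y))^{p-1}-(m v_\lambda(x)-m\ds(y))^{p-1}}{|x-y|^{N+ps}}\,dy.
\]
To estimate the integrand I plan to prove, in Appendix \ref{appa}, the elementary inequality
\[
(A-t)^{p-1}-A^{p-1}\le -c_p\,(|A|+t)^{p-2}\,t\qquad(A\in\R,\ t\ge 0,\ p\in(1,2)),
\]
and apply it with $A=m(v_\lambda(x)-\ds(y))$ and $t=u(y)-m\ds(y)\ge 0$. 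On $\tilde B_R$ the geometry gives $|A|\lesssim mR^s$ and $|x-y|\sim R$, so $(|A|+t)^{p-2}\gtrsim (mR^s+t)^{p-2}$; combined with the smallness $L(u,m,R)\le m\gamma$ and Jensen's inequality applied to the concave map $x\mapsto x^{p-1}$ (which yields $\dashint_{\tilde B_R}t\,dy\gtrsim R^sL(u,m,R)$), the correction is bounded above by $-c\,m^{p-2}L(u,m,R)/R^s$.

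Putting everything together yields
\[
\fpl w_\lambda \le C m^{p-1}+\big(Cm^{p-2}\lambda-c\,m^{p-2}L(u,m,R)\big)R^{-s}\quad\text{in } D_R.
\]
The proof now splits into two cases, as in Lemma \ref{lbl}. If $L(u,m,R)\le C_\gamma R^s(m+Hm^{2-p})$, then \eqref{lbs2} is trivial provided $C_\gamma$ is large enough. Otherwise, choose $\lambda=c_\lambda L(u,m,R)/m$ with $c_\lambda>0$ depending on $\gamma$ and small enough so that the negative term dominates and $\lambda\le\lambda_0$; then $\fpl w_\lambda\le -H\le \fpl u$ in $D_R$, and Proposition \ref{wcp} gives $u\ge w_\lambda$ in $\R^N$. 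Since $\varphi\equiv 1$ on $B_{1/2}$, on $D_{R/2}$ the competitor equals $m(1+\lambda)\ds$, whence $u/\ds-m\ge m\lambda=c_\lambda L(u,m,R)$, and we set $\sigma_\gamma=c_\lambda$. The main technical obstacle is the elementary inequality above, together with the proper use of Jensen in the singular regime $p<2$: the concavity of $x\mapsto x^{p-1}$ forces us to work with the weaker bound $\dashint t\gtrsim R^sL$ rather than $\dashint t^{p-1}\gtrsim R^{(p-1)s}L^{p-1}$, and this asymmetry is what ultimately produces the $m^{2-p}$ factor in the error term of \eqref{lbs2}.
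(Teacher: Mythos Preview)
Your overall architecture is the same as the paper's: barrier $w_\lambda=mv_\lambda$ with $u$ superposed on $\tilde B_R$, bulk bound from Proposition~\ref{bar}, correction from Proposition~\ref{spp}, then comparison. The gap is in the estimate of the correction integral.

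Your pointwise inequality $(A-t)^{p-1}-A^{p-1}\le -c_p(|A|+t)^{p-2}t$ is correct (it is just Lagrange's theorem, exactly what the paper obtains), and $|A|\lesssim mR^s$ is fine. But the passage from there to the integrated bound $-c\,m^{p-2}L/R^s$ via Jensen does \emph{not} work. Jensen for the concave map $x\mapsto x^{p-1}$ gives $\dashint t\gtrsim R^sL$, but you then need a \emph{lower} bound on $\dashint (mR^s+t)^{p-2}t$, and the map $t\mapsto (mR^s+t)^{p-2}t$ is itself concave, so Jensen goes the wrong way. Concretely: with only $\dashint t$ fixed you have no lower bound at all (take $t=T$ on a set of fraction $\varepsilon$ and $0$ elsewhere; as $\varepsilon\to 0$ with $\varepsilon T$ fixed, $\dashint (mR^s+t)^{p-2}t\sim \varepsilon T^{p-1}\to 0$). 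You also seem to have discarded the information $\dashint t^{p-1}\sim R^{(p-1)s}L^{p-1}$, which is essentially the \emph{definition} of $L$, not a Jensen consequence; your closing remark that concavity ``forces'' the weaker bound $\dashint t\gtrsim R^sL$ has this backwards.

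What the paper does instead is to stay with $\frac{t}{(|A|+t)^{2-p}}$ and apply the parametric inequality \eqref{ei2}:
\[
\frac{t}{(|A|+t)^{2-p}}\ \ge\ \Big(\frac{\theta}{\theta+1}\Big)^{2-p}t^{p-1}-\frac{\theta}{(\theta+1)^{2-p}}|A|^{p-1},
\]
with a free $\theta>0$. Integrating, the $t^{p-1}$ term produces exactly $L^{p-1}$ (by definition of the excess), and then one sets $\theta=\delta L/m$ with $\delta=\delta(\gamma)$ small; the hypothesis $L\le m\gamma$ keeps $\theta$ bounded, and the choice converts $\theta^{2-p}L^{p-1}$ into $c_\gamma\,m^{p-2}L$ while the penalty $\theta|A|^{p-1}\lesssim \delta m^{p-2}R^{(p-1)s}L$ is absorbed for $\delta$ small. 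This $\theta$-optimisation is the missing idea, and it is precisely what manufactures the linear-in-$L$ term with the $m^{2-p}$ weight. (Minor point: in your displayed bound the bulk contribution should read $Cm^{p-1}\lambda$, not $Cm^{p-2}\lambda$.)
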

\begin{proof}
Fix $\varphi\in C^\infty_c(B_1)$ s.t.\ $0\le\varphi\le 1$ in $\R^N$ and $\varphi=1$ in $B_{1/2}$. Let $\rho_\Omega>0$ be defined as in Section \ref{sec2} and $\rho'_\Omega>0$ be as in Proposition \ref{bar} (with $U=\Omega$), then fix $R$ satisfying
\[0 < R < \min\Big\{\frac{\rho_\Omega}{8},\,\rho'_\Omega\Big\}.\]
Also fix $\lambda>0$ (to be determined later) and set for all $x\in\R^N$
\[v_\lambda(x) = m\Big(1+\lambda\varphi\Big(\frac{x}{R}\Big)\Big)\ds(x).\]
By Proposition \ref{bar}  there exist $\lambda_0>0$ and $C>0$ (both depending on $N$, $p$, $s$, $\Omega$, and $\varphi$) s.t.\ for all $\lambda\in(0,\lambda_0]$ we have for all $x\in D_R$
\[\fpl v_\lambda(x) \le Cm^{p-1}\Big(1+\frac{\lambda}{R^s}\Big).\]
Further, set for all $x\in\R^N$
\[w_\lambda(x) = \begin{cases}
v_\lambda(x) & \text{if $x\in\tilde B_R^c$} \\
u(x) & \text{if $x\in\tilde B_R$.}
\end{cases}\]
By Proposition \ref{spp} we have $w_\lambda\in\widetilde{W}^{s,p}(D_R)$ and for all $x\in D_R$
\begin{align}\label{lbs3}
\fpl &w_\lambda(x) = \fpl v_\lambda(x)+2\int_{\tilde B_R}\frac{(v_\lambda(x)-u(y))^{p-1}-(v_\lambda(x)-v_\lambda(y))^{p-1}}{|x-y|^{N+ps}}\,dy \\
\nonumber&\le Cm^{p-1}\Big(1+\frac{\lambda}{R^s}\Big)-2\int_{\tilde B_R}\frac{(u(y)-v_\lambda(x))^{p-1}-(v_\lambda(y)-v_\lambda(x))^{p-1}}{|x-y|^{N+ps}}\,dy.
\end{align}
This time estimating the integrand requires some more labor than in Lemma \ref{lbl}. Set
\[\lambda_0' = \min\Big\{\lambda_0,\,\frac{1}{2}\Big(\frac{3^s}{2^s}-1\Big)\Big\} > 0,\]
depending on $N$, $p$, $s$, and $\Omega$. Assume from now on $\lambda\in(0,\lambda_0']$, and fix $x\in D_R$, $y\in\tilde B_R$. By \eqref{lbs1} we have
\[u(y) \ge m\ds(y) = v_\lambda(y).\]
By \eqref{geo} we have
\[\inf_{\tilde{B}_R} v_\lambda=\inf_{\tilde{B}_R} m \ds \ge m \frac{3^s}{2^s} R^s,\]
as well as
\[\sup_{D_R} v_\lambda\le \sup_{D_R} m (1+\lambda_0')  \ds \le m \left(\frac{3^s}{2^{s+1}}+\frac{1}{2}\right)R^s,\]
which imply
\begin{align*}
u(y)-v_\lambda(x) &\ge v_\lambda(y)-v_\lambda(x) \\
&\ge m\Big(\frac{3^s}{2^s}-1\Big)\frac{R^s}{2} > 0.
\end{align*}
Now, for all $x\in D_R$, $y\in\tilde B_R$ set $a=u(y)-v_\lambda(x)$, $b=v_\lambda(y)-v_\lambda(x)\le m(2R)^s$ (both non-negative). By Lagrange's theorem and monotonicity of $t\mapsto t^{p-2}$ in $(0,\infty)$ we have
\begin{align*}
(u(y)-v_\lambda(x))^{p-1}-(v_\lambda(y)-v_\lambda(x))^{p-1} &\ge (p-1)\min_{a\le t\le b}|t|^{p-2}(u(y)-v_\lambda(y)) \\
&= (p-1)\frac{u(y)-v_\lambda(y)}{(u(y)-v_\lambda(x))^{2-p}}.
\end{align*}
Further, apply inequality \eqref{ei2} with an arbitrary $\theta>0$ (to be determined later) to get
\begin{align*}
(u(y)-v_\lambda(x))^{p-1}-&(v_\lambda(y)-v_\lambda(x))^{p-1} \\
&\ge  \frac{p-1}{(\theta+1)^{2-p}}\big[\theta^{2-p}(u(y)-v_\lambda(y))^{p-1}-\theta(v_\lambda(y)-v_\lambda(x))^{p-1}\big] \\
&\ge \frac{p-1}{(\theta+1)^{2-p}}\big[\theta^{2-p}(u(y)-v_\lambda(y))^{p-1}-\theta m^{p-1}(2R)^{(p-1)s}\big].
\end{align*}
Besides, by \eqref{geo} we have $R/2\le|x-y|\le 3R$ for all $x\in D_R$, $y\in \tilde{B}_R$. Therefore, continuing from \eqref{lbs3}, we have for all $x\in D_R$
\begin{align*}
\fpl w_\lambda(x) &\le Cm^{p-1}\Big(1+\frac{\lambda}{R^s}\Big)-\frac{2(p-1)}{(\theta+1)^{2-p}}\int_{\tilde B_R}\hspace{-3pt}\frac{\theta^{2-p}(u(y)-v_\lambda(y))^{p-1}-\theta (2^sR^sm)^{p-1}}{|x-y|^{N+ps}}dy \\
&\le  Cm^{p-1}\Big(1+\frac{\lambda}{R^s}\Big)-\frac{1}{C}\Big(\frac{\theta}{\theta+1}\Big)^{2-p}\frac{R^{(p-1)s}}{R^{N+ps}}\dashint_{\tilde B_R}\Big(\frac{u(y)}{\ds(y)}-m\Big)^{p-1}\,dy \\
& \quad +\frac{C\theta m^{p-1}}{(\theta+1)^{2-p}}\frac{R^{(p-1)s}}{R^{N+ps}}|\tilde B_R| \\
&\le Cm^{p-1}+\Big[C(\lambda+\theta)m^{p-1}-\frac{1}{C}\Big(\frac{\theta}{\theta+1}\Big)^{2-p}L(u,m,R)^{p-1}\Big]\frac{1}{R^s},
\end{align*}
with $C=C(N,p,s,\Omega)>1$ and $\theta>0$ still to be chosen. Now let $\gamma>0$ come into play, and assume $L(u,m,R)\le m\gamma$. Since $p\in(1,2)$, we can find $\delta\in(0,1)$ (depending on $N$, $p$, $s$, $\Omega$, and $\gamma$) s.t.\
\[C\delta \le \frac{\delta^{2-p}}{2C(\gamma+1)^{2-p}}.\]
Pick such a $\delta$ and set
\[\theta = \frac{\delta L(u,m,R)}{m}.\]
Clearly we have $\theta\in(0,\gamma)$. By the relations above,
\begin{align*}
C\theta m^{p-1} &= C\delta\frac{L(u,m,R)}{m^{2-p}} \\
&\le \frac{\delta^{2-p}}{2C(\gamma+1)^{2-p}}\,\frac{L(u,m,R)}{m^{2-p}} \\
&\le \frac{1}{2C}\Big(\frac{\theta}{\gamma+1}\Big)^{2-p}L(u,m,R)^{p-1}.
\end{align*}
Plugging this into the estimate of $\fpl w_\lambda$, we have for all $x\in D_R$
\[\fpl w_\lambda(x) \le Cm^{p-1}+\Big[C\lambda m^{p-1}-\frac{1}{2C}\Big(\frac{\theta}{\gamma+1}\Big)^{2-p}L(u,m,R)^{p-1}\Big]\frac{1}{R^s}.\]
Set for simplicity
\[\kappa = \frac{\delta^{2-p}}{2C(\gamma+1)^{2-p}} \in (0,1).\]
Then, the last term of the inequality above rephrases as follows, adjusting the exponent of the excess:
\begin{align*}
\frac{1}{2C}\Big(\frac{\theta}{\gamma+1}\Big)^{2-p}L(u,m,R)^{p-1} &= \frac{\kappa\theta^{2-p}}{\delta^{2-p}}L(u,m,R)^{p-1} \\
&= \frac{\kappa}{m^{2-p}}L(u,m,R).
\end{align*}
Therefore we have for all $\lambda\in(0,\lambda_0']$ and all $x\in D_R$
\beq\label{lbs4}
\fpl w_\lambda(x) \le Cm^{p-1}+\Big[C\lambda m^{p-1}-\frac{\kappa}{m^{2-p}}L(u,m,R)\Big]\frac{1}{R^s}.
\eeq
We can now establish the constants appearing in the conclusion:
\[\sigma_\gamma = \min\Big\{\frac{\lambda'_0}{\gamma},\,\frac{\kappa}{2C}\Big\}, \qquad C_\gamma = \frac{2C\sigma_\gamma}{\kappa},\]
so that $C_\gamma>1>\sigma_\gamma>0$ and both depend on $N$, $p$, $s$, $\Omega$, and $\gamma$. Also set
\[\lambda = \frac{\sigma_\gamma}{m}L(u,m,R).\]
By assumption $L(u,m,R)\le m\gamma$ and definition of $\sigma_\gamma$ we have $\lambda\in(0,\lambda_0']$. Besides, the second term in \eqref{lbs4}, for this choice of $\lambda$, satisfies
\[C\lambda m^{p-1} \le \frac{\kappa}{2m^{2-p}}L(u,m,R).\]
Summarizing, we have for all $x\in D_R$
\beq\label{lbs5}
\fpl w_\lambda(x) \le Cm^{p-1}-\frac{\kappa}{2m^{2-p}}\frac{L(u,m,R)}{R^s}.
\eeq
Now we distinguish two cases (in which we let $H>0$ be as in \eqref{lbs1}):
\begin{itemize}[leftmargin=1cm]
\item[$(a)$] If
\[L(u,m,R) \ge \frac{2}{\kappa}(Cm+Hm^{2-p})R^s,\]
then by \eqref{lbs5} we have for all $x\in D_R$
\[\fpl w_\lambda(x) \le Cm^{p-1}-\frac{Cm+Hm^{2-p}}{m^{2-p}} = -H,\]
while for all $x\in D_R^c$
\[w_\lambda(x) = \begin{cases}
u(x) & \text{if $x\in\tilde B_R$} \\
m\ds(x) & \text{if $x\in\tilde B_R^c$.}
\end{cases}\]
Thus, by \eqref{lbs1} we have
\[\begin{cases}
\fpl w_\lambda \le \fpl u & \text{in $D_R$} \\
w_\lambda \le u & \text{in $D_R^c$.}
\end{cases}\]
Proposition \ref{wcp} now implies $w_\lambda\le u$ in $\R^N$, in particular for all $x\in D_{R/2}$
\[
\frac{u(x)}{\ds(x)}-m \ge \frac{w_\lambda(x)}{\ds(x)}-m = \lambda m = \sigma_\gamma L(u,m,R).
\]
\item[$(b)$] If on the contrary
\[L(u,m,R) < \frac{2}{\kappa}(Cm+Hm^{2-p})R^s,\]
then by the choice of $\sigma_\gamma$, $C_\gamma$ we have
\begin{align*}
\sigma_\gamma L(u,m,R)-C_\gamma(m+Hm^{2-p})R^s &\le \frac{2\sigma_\gamma}{\kappa}(Cm+Hm^{2-p})R^s-\frac{2\sigma_\gamma}{\kappa}C(m+Hm^{2-p})R^s \\
&= \frac{2\sigma_\gamma}{\kappa}(1-C)Hm^{2-p}R^s < 0.
\end{align*}
Thus, \eqref{lbs2} trivially holds since its right hand side is negative.
\end{itemize}
In both cases, we deduce \eqref{lbs2}.
\end{proof}

\noindent
We can now prove our lower bound for subsolutions of fractional $p$-Laplacian equations. We will use a right hand side of the type $-\min\{K,H\}$, which might seem redundant since one could equivalently take $H=K$. The reason for such a choice is instrumental to the proof of the oscillation estimate on $u/\ds$, where for suitable truncations of $u$ we will compute two different lower bounds for their fractional $p$-Laplacians:

\begin{proposition}\label{lba}
Let $R>0$ be small enough depending on $N$, $p$, $s$, and $\Omega$, and let $u\in\widetilde{W}^{s,p}(D_R)$, $m,K,H> 0$ satisfy
\beq\label{lba1}
\begin{cases}
\fpl u \ge -\min\{K,H\} & \text{in $D_R$} \\
u \ge m\ds & \text{in $\R^N$.}
\end{cases}
\eeq
Then, there exist $C>1>\sigma>0$, depending on $N$, $p$, $s$, and $\Omega$, s.t.\
\beq\label{lba2}
\inf_{D_{R/2}}\Big(\frac{u}{\ds}-m\Big) \ge \sigma L(u,m,R)-C(KR^s)^\frac{1}{p-1}-C(m+Hm^{2-p})R^s.
\eeq
\end{proposition}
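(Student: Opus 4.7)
The plan is to prove Proposition \ref{lba} as an immediate synthesis of Lemma \ref{lbl} and Lemma \ref{lbs}, using a dichotomy on the size of the excess $L(u,m,R)$ relative to $m$. The key observation is that assumption \eqref{lba1} implies both $\fpl u \ge -K$ and $\fpl u \ge -H$ in $D_R$, so either lemma is applicable; we use whichever gives the better bound according to the regime of $L(u,m,R)$.

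First I would invoke Lemma \ref{lbl} to produce the threshold constant $\gamma_1 > 1$ (and the accompanying $\sigma_1, C_1$), all depending only on $N, p, s$, and then apply Lemma \ref{lbs} with the specific choice $\gamma = \gamma_1$ to produce $\sigma_{\gamma_1}$ and $C_{\gamma_1}$ depending on $N, p, s, \Omega$. Define the global constants
\[
\sigma = \min\{\sigma_1,\sigma_{\gamma_1}\}, \qquad C = \max\{C_1,C_{\gamma_1}\}.
\]

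Next I split into cases. If $L(u,m,R) \ge m\gamma_1$, then by \eqref{lba1} we have $\fpl u \ge -K$ in $D_R$ and $u \ge m\ds$ in $\R^N$, so Lemma \ref{lbl} yields
\[
\inf_{D_{R/2}}\Big(\frac{u}{\ds}-m\Big) \ge \sigma_1 L(u,m,R) - C_1 (KR^s)^{\frac{1}{p-1}} \ge \sigma L(u,m,R) - C(KR^s)^{\frac{1}{p-1}},
\]
and subtracting the nonnegative quantity $C(m+Hm^{2-p})R^s$ from the right-hand side only weakens the bound, yielding \eqref{lba2}. If instead $L(u,m,R) \le m\gamma_1$, then $\fpl u \ge -H$ in $D_R$ and $u \ge m\ds$ in $\R^N$, so Lemma \ref{lbs} (applied with $\gamma = \gamma_1$) gives
\[
\inf_{D_{R/2}}\Big(\frac{u}{\ds}-m\Big) \ge \sigma_{\gamma_1} L(u,m,R) - C_{\gamma_1}(m+Hm^{2-p})R^s \ge \sigma L(u,m,R) - C(m+Hm^{2-p})R^s,
\]
and again subtracting $C(KR^s)^{1/(p-1)}$ from the right-hand side preserves the inequality.

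Combining both cases gives \eqref{lba2} in full generality. There is essentially no obstacle here: the substantive arguments (barrier construction, comparison, excess estimation) have already been carried out in Lemmas \ref{lbl} and \ref{lbs}, and this statement is only a consolidation step. The slight asymmetry in the hypothesis $\fpl u \ge -\min\{K,H\}$ — rather than a single constant — is what allows the final bound to display two structurally different error terms, one of order $(KR^s)^{1/(p-1)}$ coming from the large-excess regime and one of order $(m+Hm^{2-p})R^s$ from the small-excess regime; both are present simultaneously in the conclusion precisely because we do not know \emph{a priori} in which regime we lie, and this flexibility will be exploited in the oscillation estimate of Section \ref{sec5}.
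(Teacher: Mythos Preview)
Your proposal is correct and matches the paper's own proof essentially line for line: invoke Lemma \ref{lbl} to obtain $\gamma_1,\sigma_1,C_1$, apply Lemma \ref{lbs} with $\gamma=\gamma_1$ to obtain $\sigma_{\gamma_1},C_{\gamma_1}$, set $\sigma=\min\{\sigma_1,\sigma_{\gamma_1}\}$ and $C=\max\{C_1,C_{\gamma_1}\}$, and conclude by the dichotomy on $L(u,m,R)\gtrless m\gamma_1$. There is nothing to add.
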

\begin{proof}
Let $R>0$ be small enough s.t.\ both Lemma \ref{lbl} and Lemma \ref{lbs} apply. By \eqref{lba1} we see that $u$ satisfies \eqref{lbl1}. Thus, by Lemma \ref{lbl} there exist $\gamma_1,C_1>1>\sigma_1>0$ s.t.\ if $L(u,m,R)\ge m\gamma_1$ then
\[\inf_{D_{R/2}}\Big(\frac{u}{\ds}-m\Big) \ge \sigma_1 L(u,m,R)-C_1(KR^s)^\frac{1}{p-1}.\]
Besides, $u$ also satisfies \eqref{lbs1}. Thus, by Lemma \ref{lbs} with $\gamma=\gamma_1$ there exist $C_{\gamma_1}>1>\sigma_{\gamma_1}>0$ s.t.\ if $L(u,m,R)\le m\gamma_1$ then
\[\inf_{D_{R/2}}\Big(\frac{u}{\ds}-m\Big) \ge \sigma_{\gamma_1} L(u,m,R)-C_{\gamma_1}(m+Hm^{2-p})R^s.\]
All constants depend on $N$, $p$, $s$, and $\Omega$. Now set
\[\sigma = \min\{\sigma_1,\,\sigma_{\gamma_1}\}, \ C = \max\{C_1,\,C_{\gamma_1}\}.\]
Then, $C>1>\sigma>0$ depend on $N$, $p$, $s$, and $\Omega$ and \eqref{lba2} follows from either the first or the second of the bounds above, according to the value of the excess $L(u,m,R)\ge 0$.
\end{proof}

\section{Upper bound}\label{sec4}

\noindent
In this section we prove an upper bound for subsolutions of \eqref{dir}-type problems in domains of the type $D_R$, globally bounded from above by $M\ds$ ($M>0$). This bound is equivalent to the weak Harnack inequality \eqref{hiu} stated in Section \ref{sec1}, with the constant $C(M,K,R)$ taking the same form as in Example \ref{cmr}. Again we assume $0\in\partial\Omega$ and center balls at $0$, and we distinguish between subsolutions with large and small excess, respectively.
\vskip2pt
\noindent
We begin with a local negativity property, that is, subsolutions with large excess are in fact negative on a smaller set:

\begin{lemma}\label{ubn}
Let $R>0$ be small enough depending on $N$, $p$, $s$, and $\Omega$, and let $u\in\widetilde{W}^{s,p}(D_R)$, $M,K>0$ satisfy
\beq\label{ubn1}
\begin{cases}
\fpl u \le K & \text{in $D_R$} \\
u \le M\ds & \text{in $\R^N$}.
\end{cases}
\eeq
Then, there exists $\tilde C_2>1$, depending on $N$, $p$, $s$, and $\Omega$ s.t.\ if
\[L(u,M,R) \ge \tilde C_2\big(M+(KR^s)^\frac{1}{p-1}\big),\]
then $u\le 0$ in $D_{R/2}$.
\end{lemma}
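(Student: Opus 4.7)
The plan is to mirror the barrier construction of Lemma \ref{lbl}, but producing an \emph{upper} barrier that forces $u\le 0$ on $D_{R/2}$. Let $v\in W^{s,p}_0(A_R)$ be the torsion function of the regularized set $A_R$ from \eqref{ard}, so that $\fpl v=1$ in $A_R$ and, by Proposition \ref{top}\ref{top1} combined with Lemma \ref{arp}\ref{arp2}, one has $v(x)\ge cR^{\frac{s}{p-1}}\ds(x)$ in $D_{R/2}$ for a constant $c=c(N,p,s)>0$. For a parameter $\lambda>0$ to be chosen later, define
\[
w_\lambda(x)=\begin{cases} u(x) & x\in\tilde B_R,\\[2pt] M\ds(x)-\dfrac{\lambda}{R^{s/(p-1)}}\,v(x) & x\in\tilde B_R^c.\end{cases}
\]
Since $v$ is supported in $A_R\subset D_R$ while $\tilde B_R\cap D_R=\emptyset$, one checks that $w_\lambda\ge u$ in $A_R^c$: on $\tilde B_R$ trivially, and on $A_R^c\setminus\tilde B_R$ via the global bound $u\le M\ds$ in \eqref{ubn1}. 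Moreover, taking $\lambda=M/c$, the lower estimate on $v$ gives $w_\lambda\le(M-\lambda c)\ds\le 0$ on $D_{R/2}$, so the conclusion will follow from the weak comparison principle (Proposition \ref{wcp}) provided that $\fpl w_\lambda\ge K$ holds in $A_R$.

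To compute $\fpl w_\lambda$, I would invoke Proposition \ref{spp} with base $b:=M\ds-(\lambda/R^{s/(p-1)})v$: since $\tilde B_R$ is at distance comparable to $R$ from $A_R$, for $x\in A_R$
\[
\fpl w_\lambda(x)=\fpl b(x)+2\int_{\tilde B_R}\frac{(b(x)-u(y))^{p-1}-(b(x)-M\ds(y))^{p-1}}{|x-y|^{N+ps}}\,dy,
\]
using $b(y)=M\ds(y)$ on $\tilde B_R$ (where $v\equiv 0$). The first term must be estimated by exploiting $\fpl v=1$ in $A_R$ together with the barrier estimate for $M\ds$ coming from Proposition \ref{bar} with $\lambda=0$ and $(p-1)$-homogeneity of $\fpl$, namely $|\fpl(M\ds)|\le CM^{p-1}$. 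For the integral term, the bound $u(y)\le M\ds(y)$ on $\tilde B_R$ and the monotonicity of $t\mapsto t^{p-1}$ make the integrand non-negative; then writing $M\ds(y)-u(y)=(M-u(y)/\ds(y))\ds(y)$ and recalling $\ds(y)\sim R^s$, $|x-y|\sim R$ on $\tilde B_R$ (see \eqref{geo}), I expect a lower bound of the form
\[
2\int_{\tilde B_R}\frac{(b(x)-u(y))^{p-1}-(b(x)-M\ds(y))^{p-1}}{|x-y|^{N+ps}}\,dy\ \ge\ \frac{L(u,M,R)^{p-1}}{C R^s},
\]
assuming a suitable control of $b(x)$ (and hence of $\lambda$) ensures the signed-power differences stay in a favorable regime.

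Combining the two estimates and using $\lambda=M/c$, the inequality $\fpl w_\lambda(x)\ge K$ for $x\in A_R$ reduces to $L(u,M,R)^{p-1}\ge C(M^{p-1}+KR^s)$, which by the elementary relation $(a+b)^{1/(p-1)}\le 2^{1/(p-1)-1}(a^{1/(p-1)}+b^{1/(p-1)})$ translates precisely to the threshold $L(u,M,R)\ge \tilde C_2(M+(KR^s)^{1/(p-1)})$ stated in the lemma, for a suitable $\tilde C_2=\tilde C_2(N,p,s,\Omega)>1$. Proposition \ref{wcp} then yields $w_\lambda\ge u$ globally, which on $D_{R/2}$ gives $u\le 0$.

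The main obstacle will be the estimate of $\fpl b$: unlike in Lemma \ref{lbl}, where the base function was a pure multiple of the torsion $v$ and so $\fpl$ could be read off from Proposition \ref{top}, here $b$ is a genuine difference $M\ds-(\lambda/R^{s/(p-1)})v$ and the nonlinearity of $\fpl$ (aggravated by the singularity $p\in(1,2)$) does not allow a clean split. Handling this will require an algebraic inequality of the type established in Appendix \ref{appa} (the inequalities \eqref{ei1}--\eqref{ei2} already used in Lemmas \ref{lbl}--\ref{lbs}), applied carefully to control the signed powers when $b(x)-b(y)$ may have either sign depending on the relative sizes of $M\ds$ and $(\lambda/R^{s/(p-1)})v$. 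A secondary subtlety is that $b(x)$ itself may be negative on $D_{R/2}$ (this is precisely the point of choosing $\lambda$ large), so the analysis of the integral term must be performed paying attention to the convention $t^{p-1}=|t|^{p-2}t$.
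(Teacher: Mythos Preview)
Your strategy --- build an upper barrier that vanishes on $D_{R/2}$, use superposition to pick up the excess from $\tilde B_R$, and conclude by comparison --- is exactly the right shape, and your treatment of the integral term (via \eqref{ei3} and the geometric bounds \eqref{geo}) would go through just as in the paper. The gap is the one you flag yourself: the estimate of $\fpl b$ for $b=M\ds-(\lambda/R^{s/(p-1)})v$. This is a genuine obstruction, not a technicality. On $A_R$ the two summands $M\ds$ and $(\lambda/R^{s/(p-1)})v$ are of the \emph{same} order $MR^s$ (this is forced by your choice $\lambda=M/c$), and both behave like multiples of ${\rm d}_{A_R}^s$ near $\partial A_R$; their difference may be small or change sign, and the singular nonlinearity $t\mapsto t^{p-1}$ prevents any useful lower bound of the type $\fpl b\ge -CM^{p-1}/R^s$ from being extracted by the pointwise inequalities of Appendix~\ref{appa}. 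Those inequalities compare signed powers of \emph{single} quantities; they do not let you decouple $\fpl$ of a sum into contributions from its summands. Note also that the situation in Lemma~\ref{ubl}, where a piecewise function equal to $-\lambda R^{-s/(p-1)}\varphi$ on $D_{R/2}$ and to $M\ds$ on $D_{R/2}^c$ is handled, is different: there the two pieces live on \emph{disjoint} sets, so superposition (Proposition~\ref{spp}) applies directly; your $b$ is a true sum on $A_R$.

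The paper avoids this difficulty entirely by choosing a different barrier. Instead of combining $M\ds$ with a torsion function, it invokes Proposition~\ref{ptz} to produce, for each fixed $x_0\in D_{R/2}$, a \emph{single} function $v\in W^{s,p}_0(\Omega)$ satisfying $v(x_0)=0$, $|\fpl v|\le C/R^s$ in $D_{2R}$, $|v|\le CR^s$ in $D_{2R}$, and $v\ge \ds/C$ in $D_R^c$. The barrier is then simply $w=CMv$ on $\tilde B_R^c$ (and $u$ on $\tilde B_R$): by homogeneity $|\fpl(CMv)|\le C'M^{p-1}/R^s$ with no nonlinearity issue, and the third property gives $CMv\ge M\ds\ge u$ on $D_R^c$ directly. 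After the superposition and integral estimates (essentially the ones you wrote), comparison yields $u(x_0)\le CMv(x_0)=0$. Since $x_0\in D_{R/2}$ was arbitrary, the conclusion follows. The key idea you are missing is to use a ready-made one-piece barrier rather than trying to assemble one as a difference.
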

\begin{proof}
First let $R\in (0,\rho_\Omega/4)$, then fix $x_0\in D_{R/2}$ (possibly excluding a subset with zero measure). By Proposition \ref{ptz}, there exist $v\in W^{s,p}_0(\Omega)\cap C^0(\R^N)$, $C=C(N,p,s,\Omega)>1$ satisfying $v(x_0)=0$ and
\beq\label{ubn2}
\begin{cases}
\displaystyle|\fpl v| \le \frac{C}{R^s} & \text{in $D_{2R}$} \\
|v| \le CR^s & \text{in $D_{2R}$} \\
\displaystyle v \ge \frac{\ds}{C} & \text{in $D_R^c$.}
\end{cases}
\eeq
Comparing \eqref{ubn1} and \eqref{ubn2} we get for all $x\in D_R^c$
\[u(x) \le M\ds(x) \le CMv(x).\]
Now set for all $x\in\R^N$
\[w(x) = \begin{cases}
CMv(x) & \text{if $x\in\tilde B_R^c$} \\
u(x) & \text{if $x\in\tilde B_R$.}
\end{cases}\]
By Proposition \ref{spp} we have $w\in\widetilde{W}^{s,p}(D_R)$ and for all $x\in D_R$
\begin{align*}
\fpl w(x) &= (CM)^{p-1}\fpl v(x)\\
& \ +2\int_{\tilde B_R}\frac{(CMv(x)-u(y))^{p-1}-(CMv(x)-CMv(y))^{p-1}}{|x-y|^{N+ps}}\,dy.
\end{align*}
We need to estimate the integrand. Fix $x\in D_R$, $y\in\tilde B_R$. By \eqref{ei3} with 
\[a=CMv(x)-CMv(y), \qquad b=CMv(y)-u(y)\ge 0\]
and the second relation of \eqref{ubn2}, we have
\begin{align*}
(CMv(x)-u(y))^{p-1}&-(CMv(x)-CMv(y))^{p-1} \\
&\ge (CMv(y)-u(y))^{p-1}-|CMv(x)-CMv(y)|^{p-1} \\
&\ge (CMv(y)-u(y))^{p-1}-C'M^{p-1}R^{(p-1)s},
\end{align*}
with both $C,C'>0$ depending on $N$, $p$, $s$, and $\Omega$. Recalling \eqref{geo}, for all $x\in D_R$, $y\in\tilde B_R$ we have $R/2\le |x-y|\le 3R$ and $R/2\le{\rm d}_\Omega(y)\le 2R$. Therefore, using also the third inequality in \eqref{ubn2}, for all $x\in D_R$ we have
\begin{align*}
\int_{\tilde B_R}&\frac{(CMv(x)-u(y))^{p-1}-(CMv(x)-CMv(y))^{p-1}}{|x-y|^{N+ps}}\,dy \\
& \ \ge \frac{1}{CR^{N+ps}}\int_{\tilde B_R}(M\ds(y)-u(y))^{p-1}\,dy-\frac{C'M^{p-1}}{R^{N+s}}|\tilde B_R| \\
& \ \ge \frac{R^{N+(p-1)s}}{CR^{N+ps}}\,\dashint_{\tilde B_R}\Big(M-\frac{u(y)}{\ds(y)}\Big)^{p-1}\,dy-\frac{C'M^{p-1}}{R^s} \\
& \ \ge \frac{L(u,M,R)^{p-1}}{CR^s}-\frac{C'M^{p-1}}{R^s}.
\end{align*}
Plugging the last inequality into the estimate of $\fpl w$ and using the first relation of \eqref{ubn2}, we get for all $x\in D_R$
\[\fpl w(x) \ge \frac{L(u,M,R)^{p-1}}{CR^s}-\frac{CM^{p-1}}{R^s},\]
for a suitable $C=C(N,p,s,\Omega)>1$. Clearly we can find $\tilde C_2=\tilde C_2(N,p,s,\Omega)>1$ s.t.\
\[\tilde C_2\big(M+(KR^s)^\frac{1}{p-1}\big) \ge \big(C^2M^{p-1}+CKR^s\big)^\frac{1}{p-1},\]
with $K>0$ as in \eqref{ubn1}. Now assume
\[L(u,M,R) \ge \tilde C_2\big(M+(KR^s)^\frac{1}{p-1}\big).\]
Then, for all $x\in D_R$ we get
\[\fpl w(x) \ge \frac{C^2M^{p-1}+CKR^s}{CR^s}-\frac{CM^{p-1}}{R^s} = K.\]
Summarizing, by \eqref{ubn1} and the definition of $w$ we have
\[\begin{cases}
\fpl u \le K \le \fpl w & \text{in $D_R$} \\
u \le w & \text{in $D_R^c$.}
\end{cases}\]
Proposition \ref{wcp} now implies $u\le w$ in all of $\R^N$. In particular we have
\[u(x_0) \le CMv(x_0) = 0,\]
and conclude.
\end{proof}

\noindent
We can now prove the upper bound for subsolutions with large excess, partially analogous to Lemma \ref{lbl} above. Note that, due to the different approach followed here, the upper bound only holds in the smaller set $D_{R/4}$.

\begin{lemma}\label{ubl}
Let $R>0$ be small enough depending on $N$, $p$, $s$, and $\Omega$, and let $u\in\widetilde{W}^{s,p}(D_R)$, $M,K>0$ satisfy \eqref{ubn1}. Then, there exist $\gamma_2,C_2>1>\sigma_2>0$, depending on $N$, $p$, $s$, and $\Omega$, s.t.\ if $L(u,M,R)\ge M\gamma_2$ then
\beq\label{ubl1}
\inf_{D_{R/4}}\Big(M-\frac{u}{\ds}\Big) \ge \sigma_2L(u,M,R)-C_2(KR^s)^\frac{1}{p-1}.
\eeq
\end{lemma}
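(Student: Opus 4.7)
The plan is to follow the strategy of Lemma \ref{lbl} adapted to the upper-bound setting, using as a barrier a modification of $M\ds$ obtained by subtracting a scaled multiple of the torsion function of the regularized set $A_{R/2}$, together with the preliminary negativity provided by Lemma \ref{ubn}.

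First I would handle a trivial regime. With the constants $\sigma_2,C_2,\gamma_2$ to be fixed later, if $L(u,M,R)\le (C_2/\sigma_2)(KR^s)^{1/(p-1)}$, then the right-hand side of \eqref{ubl1} is non-positive and the global bound $u\le M\ds$ already gives the conclusion. Otherwise, combining this with the hypothesis $L\ge M\gamma_2$ and choosing both $\gamma_2$ and $C_2/\sigma_2$ larger than $2\tilde{C}_2$, one obtains $L\ge \tilde{C}_2(M+(KR^s)^{1/(p-1)})$, and Lemma \ref{ubn} yields the pointwise upgrade $u\le 0$ in $D_{R/2}$.

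Next I would construct the barrier. Let $\tilde v\in W^{s,p}_0(A_{R/2})$ be the torsion function of $A_{R/2}$. By Proposition \ref{top} and Lemma \ref{arp}(ii),
\[ \tilde v(x)\ge cR^{s/(p-1)}\ds(x) \text{ in } D_{3R/8}, \qquad \tilde v\le CR^{ps/(p-1)}, \qquad \fpl\tilde v\le 1 \text{ weakly in } \R^N. \]
For $\lambda>0$ to be chosen, set
\[ w(x)=\begin{cases} M\ds(x)-\lambda R^{-s/(p-1)}\tilde v(x) & x\in\tilde B_R^c, \\ u(x) & x\in\tilde B_R. \end{cases} \]
Since $A_{R/2}\cap\tilde B_R=\emptyset$, $\tilde v$ vanishes on $\tilde B_R$ and on $D_{R/2}^c$; hence $w=M\ds\ge u$ on $D_{R/2}^c\setminus\tilde B_R$ while $w=u$ on $\tilde B_R$, so the inequality $w\ge u$ on $D_{R/2}^c$ is automatic, independently of $\lambda$.

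Applying Proposition \ref{spp} with $V=\tilde B_R$ and outer function $u_1=M\ds-\lambda R^{-s/(p-1)}\tilde v$ (which coincides with $M\ds$ on $\tilde B_R$), one gets for $x\in D_{R/2}$
\[ \fpl w(x)=\fpl u_1(x)+2\int_{\tilde B_R}\frac{(u_1(x)-u(y))^{p-1}-(u_1(x)-M\ds(y))^{p-1}}{|x-y|^{N+ps}}\,dy. \]
The integrand is bounded below via inequality \eqref{ei3} with $a=u_1(x)-M\ds(y)$ and $b=M\ds(y)-u(y)\ge 0$; using $|x-y|\simeq R$ and $\ds(y)\simeq R^s$ on $\tilde B_R$, the tail contribution is $\gtrsim (L(u,M,R)^{p-1}-C(M+\lambda)^{p-1})/R^s$. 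Choosing $\lambda=\sigma L(u,M,R)$ for small $\sigma$ and using $L\ge M\gamma_2$ with $\gamma_2$ sufficiently large to absorb the negative term into $L^{p-1}/2$ yields $\fpl w\ge K\ge \fpl u$ in $D_{R/2}$. Proposition \ref{wcp} then gives $u\le w$ in $\R^N$, and for $x\in D_{R/4}\subset D_{3R/8}$ the lower bound on $\tilde v$ produces
\[ u(x)\le M\ds(x)-c\lambda\ds(x)=\bigl(M-c\sigma L(u,M,R)\bigr)\ds(x), \]
which is \eqref{ubl1} with $\sigma_2=c\sigma$ after incorporating the trivial case.

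I expect the main obstacle to be the lower estimate on $\fpl u_1$ for $u_1=M\ds-\lambda R^{-s/(p-1)}\tilde v$: this function is neither of the cutoff-perturbation form covered by Proposition \ref{bar} (which, moreover, requires a bounded perturbation parameter and so would not allow $\lambda\simeq L$ for large $L$) nor a pure torsion function for which Proposition \ref{top} could be invoked, and the nonlinearity of $\fpl$ prevents splitting the estimate additively. A bound of the form $\fpl u_1\ge -C(M^{p-1}+\lambda^{p-1}/R^s)$ is what is needed to balance the tail, and should follow from a direct analysis exploiting the equation $\fpl\tilde v=1$ in $A_{R/2}$ together with the behaviour of $\ds$ and $\tilde v$ near $\partial A_{R/2}\cap\partial\Omega$; closing this estimate in the singular regime $p\in(1,2)$ is where the argument is genuinely more delicate than in the degenerate case of \cite{IMS1}, and accounts for the restriction of the conclusion to the smaller set $D_{R/4}$.
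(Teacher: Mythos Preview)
Your overall strategy matches the paper's: first the trivial regime, then Lemma \ref{ubn} to obtain $u\le 0$ in $D_{R/2}$, then a barrier built from the torsion function of $A_{R/2}$, then comparison. However, the barrier you propose is not the one the paper uses, and the obstacle you flag at the end is real and is exactly what the paper's construction is designed to bypass.

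You take $u_1=M\ds-\lambda R^{-s/(p-1)}\tilde v$ everywhere on $\tilde B_R^c$ and then need the lower bound $\fpl u_1\ge -C(M^{p-1}+\lambda^{p-1}/R^s)$ in $D_{R/2}$. As you say, this is not covered by Proposition \ref{bar} (which requires $|\lambda|\le\lambda_0$, incompatible with $\lambda\simeq L\ge M\gamma_2$), nor by Proposition \ref{top}, and the nonlinearity forbids splitting additively. The paper never attempts such an estimate. Instead it drops $M\ds$ inside $D_{R/2}$ altogether and sets
\[
v_\lambda(x)=\begin{cases}-\lambda R^{-s/(p-1)}\varphi(x) & x\in D_{R/2},\\ M\ds(x) & x\in D_{R/2}^c,\end{cases}
\]
with $\varphi$ the torsion function of $A_{R/2}$. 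Now $\fpl v_\lambda$ in $A_{R/2}$ is computed by superposition starting from the pure scaled torsion function (for which $\fpl=-\lambda^{p-1}/R^s$ exactly) and adding only the correction from the replacement $0\mapsto M\ds$ on the \emph{distant} set $D_{R/2}^c$; this yields $\fpl v_\lambda\ge -C(\lambda^{p-1}+M^{p-1})/R^s$ by an argument from \cite{IMS1} that holds for all $p>1$ (so this step is not a singular-case subtlety, contrary to your last paragraph).

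The price of the piecewise definition is that $v_\lambda=0$ on $D_{R/2}\setminus A_{R/2}$, so comparison must be run in $A_{R/2}$ rather than $D_{R/2}$, and on that part of $A_{R/2}^c$ the boundary inequality $w_\lambda\ge u$ comes precisely from $u\le 0$. This is where Lemma \ref{ubn} is actually used in the comparison; in your scheme the boundary conditions on $D_{R/2}^c$ hold automatically and the negativity is never exploited afterwards. The restriction to $D_{R/4}$ is then purely geometric (the lower bound on $\varphi$ in terms of $\ds$ from Lemma \ref{arp} holds on $D_{3R/8}$), not a manifestation of the singular nature of the operator.
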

\begin{proof}
Fix $R\in(0,\rho_\Omega/4)$, and let $\tilde C_2>1$ be as in Lemma \ref{ubn}. Fix $\gamma_2,C_2>1>\sigma_2>0$ (to be determined later) s.t.\
\beq\label{ubl2}
\min\Big\{\frac{C_2}{\sigma_2},\,\gamma_2\Big\} \ge 2\tilde C_2.
\eeq
Assume from the start $L(u,M,R)\ge M\gamma_2$ and
\beq\label{ubl3}
\sigma_2L(u,M,R) \ge C_2(KR^s)^\frac{1}{p-1},
\eeq
otherwise the conclusion is trivial due to \eqref{ubn1} (the right hand side of \eqref{ubl1} becomes negative). Therefore, we have
\begin{align*}
L(u,M,R) &\ge \frac{M\gamma_2}{2}+\frac{C_2}{2\sigma_2}(KR^s)^\frac{1}{p-1} \\
&\ge \tilde C_2\big(M+(KR^s)^\frac{1}{p-1}\big).
\end{align*}
By Lemma \ref{ubn}, we have $u\le 0$ in $D_{R/2}$. We define $A_{R/2}$ as in \eqref{ard} (centered at $0$), so by Lemma \ref{arp} (note that $R<\rho_\Omega$) we have $D_{3R/8}\subseteq A_{R/2}\subseteq D_{R/2}$, and $6\, {\rm d}_{A_{R/2}}\ge {\rm d_\Omega}$ in $D_{R/4}$. Next, let $\varphi\in W^{s,p}_0(A_{R/2})$ be the solution of the torsion problem
\beq\label{ubl4}
\begin{cases}
\fpl\varphi = 1 & \text{in $A_{R/2}$} \\
\varphi = 0 & \text{in $A_{R/2}^c$.}
\end{cases}
\eeq
By Proposition \ref{top} \ref{top2} we have $\fpl\varphi\le 1$ in all of $\R^N$, while Proposition \ref{top} \ref{top1} \ref{top3} and the relations above imply for all $x\in D_{R/4}$
\[\frac{R^\frac{s}{p-1}}{C}\ds(x) \le \varphi(x) \le CR^s,\]
with $C=C(N,p,s)>1$. Now fix $\lambda>0$ (to be determined later) and set for all $x\in\R^N$
\[v_\lambda(x) = \begin{cases}
-\lambda R^{-\frac{s}{p-1}}\varphi(x) & \text{if $x\in D_{R/2}$} \\
M\ds(x) & \text{if $x\in D_{R/2}^c$.}
\end{cases}\]
Reasoning as in \cite[eq.\ (4.19)]{IMS1} (an argument which holds for any $p>1$) and exploiting \eqref{ubl4}, we have for all $x	\in A_{R/2}$
\beq\label{ubl5}
\fpl v_\lambda(x) \ge -\frac{C}{R^s}(\lambda^{p-1}+M^{p-1}),
\eeq
with $C=C(N,p,s,\Omega)>1$. Further set for all $x\in\R^N$
\[w_\lambda(x) = \begin{cases}
v_\lambda(x) & \text{if $x\in\tilde B_R^c$} \\
u(x) & \text{if $x\in\tilde B_R$.}
\end{cases}\]
By Proposition \ref{spp} we have $w_\lambda\in\widetilde{W}^{s,p}(A_{R/2})$ and, using also \eqref{ubl5} and the definition of $v_\lambda$, we have for all $x\in A_{R/2}$
\begin{align*}
\fpl w_\lambda(x) &= \fpl v_\lambda(x)+2\int_{\tilde B_R}\frac{(v_\lambda(x)-u(y))^{p-1}-(v_\lambda(x)-v_\lambda(y))^{p-1}}{|x-y|^{N+ps}}\,dy \\
&\ge 2\int_{\tilde B_R}\frac{(v_\lambda(x)-u(y))^{p-1}-(v_\lambda(x)-M\ds(y))^{p-1}}{|x-y|^{N+ps}}\,dy-\frac{C}{R^s}(\lambda^{p-1}+M^{p-1}).
\end{align*}
As in previous cases, we will now estimate the integrand. By the properties of $\varphi$, by taking $C=C(N,p,s,\Omega)>1$ even bigger if necessary, we have for all $x\in D_{R/4}$
\beq\label{ubl6}
w_\lambda(x) = -\frac{\lambda}{R^\frac{s}{p-1}}\varphi(x) \le -\frac{\lambda}{C}\ds(x).
\eeq
Besides, for all $x\in D_{R/2}$
\[|w_\lambda(x)| = |v_\lambda(x)| \le C\lambda R^s.\]
Fix $x\in A_{R/2}$, $y\in\tilde B_R$. By \eqref{ei3} with $a=v_\lambda(x)-M\ds(y)$, $b=M\ds(y)-u(y)\ge 0$ (see \eqref{ubn1}), we have
\begin{align*}
(v_\lambda(x)-u(y))^{p-1}\hspace{-1pt}-(v_\lambda(x)-M\ds(y))^{p-1} &\ge (M\ds(y)-u(y))^{p-1}\hspace{-1pt}-|v_\lambda(x)-M\ds(y)|^{p-1} \\
&\ge (M\ds(y)-u(y))^{p-1}\hspace{-1pt}-C(\lambda^{p-1}\hspace{-1pt}+M^{p-1})R^{(p-1)s}.
\end{align*}
Taking into account the usual bounds on $|x-y|$, ${\rm d}_\Omega(y)$, and recalling the estimate above on $\fpl w_\lambda$, we have for all $x\in A_{R/2}$
\begin{align*}
\fpl w_\lambda(x) &\ge2\hspace{-1pt}\int_{\tilde B_R}\hspace{-4pt}\frac{(M\ds(y)-u(y))^{p-1}\hspace{-1pt}-C(\lambda^{p-1}\hspace{-1pt}+M^{p-1})R^{(p-1)s}}{|x-y|^{N+ps}}dy  -\frac{C}{R^s}(\lambda^{p-1}\hspace{-1pt}+M^{p-1}) \\
&\ge  \frac{R^{(p-1)s}}{CR^{N+ps}}\int_{\tilde B_R}\Big(M-\frac{u(y)}{\ds(y)}\Big)^{p-1}\,dy -C(\lambda^{p-1}\hspace{-1pt}+M^{p-1})\left[\frac{R^{(p-1)s}}{R^{N+ps}}|\tilde B_R| +\frac{1}{R^s}\right]\\
&\ge \frac{L(u,M,R)^{p-1}}{CR^s}-\frac{C}{R^s}(\lambda^{p-1}\hspace{-1pt}+M^{p-1}),
\end{align*}
for a suitable $C=C(N,p,s,\Omega)>1$. We can now fix the constants involved in the conclusion, setting
\[\gamma_2 = \max\big\{2\tilde C_2,\,(4C^2)^\frac{1}{p-1}\big\},\]
and accordingly
\[\sigma_2 = \frac{1}{C(4C^2)^\frac{1}{p-1}}, \qquad C_2 = \sigma_2\max\big\{2\tilde C_2,\,(2C)^\frac{1}{p-1}\big\},\]
so $\gamma_2,C_2>1>\sigma_2>0$ depend on $N$, $p$, $s$, and $\Omega$ and satisfy \eqref{ubl2}. Also set
\[\lambda = \frac{L(u,M,R)}{(4C^2)^\frac{1}{p-1}} > 0.\]
By \eqref{ubl3} and the assumption $L(u,M,R)\ge M\gamma_2$, we have for all $x\in A_{R/2}$
\begin{align*}
\fpl w_\lambda(x) &\ge \frac{L(u,M,R)^{p-1}}{CR^s}-\frac{C}{R^s}\Big[\frac{L(u,M,R)^{p-1}}{4C^2}+\frac{L(u,M,R)^{p-1}}{\gamma_2^{p-1}}\Big] \\
&\ge \frac{L(u,M,R)^{p-1}}{2CR^s} \\
&\ge \frac{C_2^{p-1}KR^s}{2C\sigma_2^{p-1}R^s} \ge K.
\end{align*}
Besides, for all $x\in A_{R/2}^c$ we distinguish three cases:
\begin{itemize}[leftmargin=1cm]
\item[$(a)$] If $x\in D_{R/2}^c\cap\tilde B_R^c$, then by definition of $w_\lambda$ and \eqref{ubn1} we have
\[w_\lambda(x) = v_\lambda(x) = M\ds(x) \ge u(x).\]
\item[$(b)$] If $x\in\tilde B_R$, then simply
\[w_\lambda(x) = u(x).\]
\item[$(c)$] If $x\in D_{R/2}\cap A_{R/2}^c$, then by Lemma \ref{ubn}
\[w_\lambda(x) = v_\lambda(x) = -\frac{\lambda}{R^\frac{s}{p-1}}\varphi(x) = 0 \ge u(x).\]
\end{itemize}
Summarizing, we have
\[\begin{cases}
\fpl u \le K \le \fpl w_\lambda & \text{in $A_{R/2}$} \\
u \le w_\lambda & \text{in $A_{R/2}^c$.}
\end{cases}\]
By Proposition \ref{wcp}, $u\le w_\lambda$ in all of $\R^N$. Recalling \eqref{ubl6}, for all $x\in D_{R/4}$ we have
\begin{align*}
u(x) &\le -\frac{\lambda}{C}\ds(x) \\
&= -\frac{L(u,M,R)}{C(4C^2)^\frac{1}{p-1}}\,\ds(x) = -\sigma_2L(u,M,R)\ds(x).
\end{align*}
Therefore, we have for all $x\in D_{R/4}$
\[M-\frac{u(x)}{\ds(x)} \ge -\frac{u(x)}{\ds(x)} \ge \sigma_2L(u,M,R),\]
hence we deduce \eqref{ubl1}.
\end{proof}

\noindent
Next we prove the upper bound for subsolutions with small excess, analogous to Lemma \ref{lbs} above. In this case, the argument is closer to the one for lower bound:

\begin{lemma}\label{ubs}
Let $R>0$ be small enough depending on $N$, $p$, $s$, and $\Omega$, and let  $u\in\widetilde{W}^{s,p}(D_R)$, $M,H>0$ satisfy
\beq\label{ubs1}
\begin{cases}
\fpl u \le H & \text{in $D_R$} \\
u \le M\ds & \text{in $\R^N$.}
\end{cases}
\eeq
Then, for all $\gamma>1$ there exist $C_\gamma>1>\sigma_\gamma>0$, depending on $N$, $p$, $s$, $\Omega$, and $\gamma$, s.t.\ if $L(u,M,R)\le M\gamma$ then
\beq\label{ubs2}
\inf_{D_{R/2}}\Big(M-\frac{u}{\ds}\Big) \ge \sigma_\gamma L(u,M,R)-C_\gamma(M+HM^{2-p})R^s.
\eeq
\end{lemma}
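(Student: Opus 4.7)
The plan is to mirror the strategy of Lemma \ref{lbs}, but with the barrier acting from above rather than from below. Fix $\varphi\in C^\infty_c(B_1)$ with $0\le\varphi\le 1$ in $\R^N$ and $\varphi=1$ on $B_{1/2}$; for $\lambda\in(0,\lambda_0']$ to be determined, set
\[
v_\lambda(x)=M\Bigl(1-\lambda\varphi\Bigl(\frac{x}{R}\Bigr)\Bigr)\ds(x).
\]
This is $M$ times the $(-\lambda)$-version of the function in Proposition \ref{bar}, so by $(p-1)$-homogeneity of $\fpl$ one has
\[
\fpl v_\lambda\ge -CM^{p-1}\Bigl(1+\frac{\lambda}{R^s}\Bigr)
\]
weakly in $D_R$. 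Define $w_\lambda=v_\lambda$ on $\tilde B_R^c$ and $w_\lambda=u$ on $\tilde B_R$, and apply Proposition \ref{spp} to express $\fpl w_\lambda$ as the sum of $\fpl v_\lambda$ and twice the non-local integral over $\tilde B_R$ with kernel $|x-y|^{-N-ps}$.

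The key point, and the main structural difference from Lemma \ref{lbs}, is the lower bound on the integrand
\[
(v_\lambda(x)-u(y))^{p-1}-(v_\lambda(x)-v_\lambda(y))^{p-1},\qquad x\in D_R,\ y\in\tilde B_R.
\]
Since $u\le M\ds$, one has $v_\lambda(y)=M\ds(y)\ge M(3R/2)^s>v_\lambda(x)\ge 0$ by \eqref{geo} (for $\lambda$ small enough), so both entries in the differences above are now non-positive, and there is no pointwise sign information on $v_\lambda(x)-u(y)$ alone. Writing $\delta=v_\lambda(y)-v_\lambda(x)>0$ and $\eta=M\ds(y)-u(y)\ge 0$, and exploiting the odd symmetry $(-t)^{p-1}=-t^{p-1}$, the integrand equals $\delta^{p-1}-(\delta-\eta)^{p-1}$ when $\eta\le\delta$ and $\delta^{p-1}+(\eta-\delta)^{p-1}$ when $\eta>\delta$. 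In the first regime one applies \eqref{ei2} with $a=\delta$, $b=\delta-\eta$; in the second regime one uses the sub-additivity bound $\delta^{p-1}+(\eta-\delta)^{p-1}\ge\eta^{p-1}$. Both cases give, for every $\theta>0$,
\[
(v_\lambda(x)-u(y))^{p-1}-(v_\lambda(x)-v_\lambda(y))^{p-1}\ge\frac{p-1}{(\theta+1)^{2-p}}\bigl[\theta^{2-p}\eta^{p-1}-\theta\,\delta^{p-1}\bigr],
\]
together with the trivial bound $\delta^{p-1}\le CM^{p-1}R^{(p-1)s}$.

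Integrating over $\tilde B_R$ via the usual estimates $|x-y|\simeq R$, $\ds(y)\simeq R^s$, recalling \eqref{lum}, and combining with the bound on $\fpl v_\lambda$ yields, for all $x\in D_R$,
\[
\fpl w_\lambda(x)\ge -CM^{p-1}-\frac{CM^{p-1}\lambda}{R^s}+\frac{c}{R^s(\theta+1)^{2-p}}\bigl[\theta^{2-p}L(u,M,R)^{p-1}-\theta M^{p-1}\bigr].
\]
Assuming $L(u,M,R)\le M\gamma$, I would follow the optimisation of Lemma \ref{lbs} almost verbatim: choose $\theta=\theta_0 L(u,M,R)/M$ with $\theta_0=\theta_0(\gamma)\in(0,1)$ small enough (feasible since $p<2$) that the $\theta M^{p-1}$ penalty absorbs half of the $\theta^{2-p}L^{p-1}$ gain, producing a clean good term of order $L(u,M,R)/(M^{2-p}R^s)$; then pick $\lambda=\sigma_\gamma L(u,M,R)/M\in(0,\lambda_0']$ small enough that the remaining $CM^{p-1}\lambda/R^s$ contribution is also absorbed.

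The proof closes by the same dichotomy as in Lemma \ref{lbs}. If $L(u,M,R)\ge C_\gamma(M+HM^{2-p})R^s$, the above estimate becomes $\fpl w_\lambda\ge H$ in $D_R$, while $w_\lambda\ge u$ holds outside $D_R$ trivially (equality on $\tilde B_R$; on $\tilde B_R^c\cap D_R^c$, $\varphi(\cdot/R)=0$ so $w_\lambda=M\ds\ge u$ by hypothesis). Proposition \ref{wcp} then yields $w_\lambda\ge u$ on $\R^N$, and in $D_{R/2}$, where $\varphi(x/R)=1$, this reads $u\le M(1-\lambda)\ds$, i.e.\ $M-u/\ds\ge\lambda M=\sigma_\gamma L(u,M,R)$, which is \eqref{ubs2}. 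In the complementary range $L(u,M,R)<C_\gamma(M+HM^{2-p})R^s$, the right-hand side of \eqref{ubs2} is non-positive by the choice of $\sigma_\gamma,C_\gamma$, so the inequality is trivial. The hard step is the integrand bound in the second paragraph: the lack of sign control on $v_\lambda(x)-u(y)$ must be compensated by the strict geometric gap $v_\lambda(y)>v_\lambda(x)$ granted by \eqref{geo}, combined with the oddness of $t\mapsto t^{p-1}$, before \eqref{ei2} can be applied; this is the only substantive new ingredient with respect to the supersolution counterpart.
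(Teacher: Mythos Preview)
Your proof is correct and follows essentially the same route as the paper (which writes $\lambda<0$ in $1+\lambda\varphi$ rather than your $\lambda>0$ in $1-\lambda\varphi$, and splits the integrand estimate into the same two regimes according to the sign of $u(y)-v_\lambda(x)$). One minor slip: in the first regime you cannot invoke \eqref{ei2} directly with $a=\delta$, $b=\delta-\eta$; as in Lemma \ref{lbs} and in the paper's own argument, you first use Lagrange's theorem to obtain $\delta^{p-1}-(\delta-\eta)^{p-1}\ge(p-1)\eta/\delta^{2-p}$, and then apply \eqref{ei2} with $a=\eta$, $b=\delta-\eta$ (so that $a+b=\delta$), followed by the trivial bound $(\delta-\eta)^{p-1}\le\delta^{p-1}$.
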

\begin{proof}
Since the argument closely follows that of Lemma \ref{lbs}, we sketch it quickly. Let $R>0$ satisfy
\[R < \min\Big\{\frac{\rho_\Omega}{8},\,\rho'_\Omega\Big\}\]
($\rho'_\Omega>0$ being defined as in Proposition \ref{bar}). We define $\varphi\in C^\infty_c(B_1)$ as in Lemma \ref{lbs}, then we fix $\lambda<0$ and set for all $x\in\R^N$
\[v_\lambda(x) = M\Big(1+\lambda\varphi\Big(\frac{x}{R}\Big)\Big)\ds(x),\]
so $v_\lambda\in\widetilde{W}^{s,p}(D_R)$. By Proposition \ref{bar}, there exists $\lambda_0>0$ s.t.\ whenever $\lambda\in[-\lambda_0,0)$, we have for all $x\in D_R$
\[\fpl v_\lambda(x) \ge -CM^{p-1}\Big(1+\frac{|\lambda|}{R^s}\Big),\]
with $C>1$, $\lambda_0>0$ depending on $N$, $p$, $s$, and $\Omega$. Next we define $w_\lambda\in\widetilde{W}^{s,p}(D_R)$ as in Lemma \ref{ubl} and get for all $x\in D_R$
\beq\label{ubs3}
\fpl w_\lambda(x) \ge 2\int_{\tilde B_R}\hspace{-4pt}\frac{(v_\lambda(y)-v_\lambda(x))^{p-1}\hspace{-1pt}-(u(y)-v_\lambda(x))^{p-1}}{|x-y|^{N+ps}} dy-CM^{p-1}\Big(1+\frac{|\lambda|}{R^s}\Big).
\eeq
This time the estimate of the integrand must be performed in two different ways, due to the singularity of $t\mapsto|t|^{p-2}$ at $0$. Fix $x\in D_R$, $y\in\tilde B_R$, and distinguish two cases:
\begin{itemize}[leftmargin=1cm]
\item[$(a)$] If $u(y)\ge v_\lambda(x)$, then we argue as in Lemma \ref{lbs}. We set
\[a=v_\lambda(y)-v_\lambda(x), \quad b=u(y)-v_\lambda(x),\]
so $a,b\ge 0$ and $a\ge b$ thanks to $v_\lambda(y)=Md_\Omega^s(y)\ge u(y)$. Then we use Lagrange's theorem and monotonicity of $t\mapsto t^{p-2}$ in $(0,\infty)$ to get
\begin{align*}
(v_\lambda(y)-v_\lambda(x))^{p-1}-(u(y)-v_\lambda(x))^{p-1}
&\ge (p-1)\min_{b\le t\le a}|t|^{p-2}(v_\lambda(y)-u(y)) \\
&= (p-1)\frac{v_\lambda(y)-u(y)}{(v_\lambda(y)-v_\lambda(x))^{2-p}}.
\end{align*}
Further, by inequality \eqref{ei2} with an arbitrary $\theta>0$, we get
\begin{align*}
(v_\lambda(y)-v_\lambda(x))^{p-1}&-(u(y)-v_\lambda(x))^{p-1} \\
& \ \ge \frac{p-1}{(\theta+1)^{2-p}}\big[\theta^{2-p}(v_\lambda(y)-u(y))^{p-1}-\theta(v_\lambda(y)-v_\lambda(x))^{p-1}\big] \\
& \ \ge \frac{p-1}{(\theta+1)^{2-p}}\big[\theta^{2-p}(v_\lambda(y)-u(y))^{p-1}-\theta(MR^s)^{p-1}\big].
\end{align*}
\item[$(b)$] If $u(y)<v_\lambda(x)$, note that by \eqref{ubs1} and \eqref{geo}
\[v_\lambda(x) \le M\ds(x) \le MR^s \le M\ds(y) = v_\lambda(y).\]
Then, by subadditivity of $t\mapsto t^{p-1}$ in $[0,\infty)$ we have
\[(v_\lambda(y)-u(y))^{p-1}\le (v_\lambda(y)-v_\lambda(x))^{p-1}+(v_\lambda(x)-u(y))^{p-1}\]
which, since $p\in (1, 2)$, implies that for any $\theta>0$
\[(v_\lambda(y)-v_\lambda(x))^{p-1}-(u(y)-v_\lambda(x))^{p-1}\ge (p-1)\left(\frac{\theta}{\theta+1}\right)^{2-p}(v_\lambda(y)-u(y))^{p-1}.\]
\end{itemize}
All in all, for any $x\in D_R$, $y\in\tilde B_R$ we have
\[(v_\lambda(y)-v_\lambda(x))^{p-1}\hspace{-1pt}-(u(y)-v_\lambda(x))^{p-1} \ge \frac{p-1}{(\theta+1)^{2-p}}\big[\theta^{2-p}(v_\lambda(y)-u(y))^{p-1}\hspace{-1pt}-\theta (MR^s)^{p-1}\big].\]
Now we plug such estimate into \eqref{ubs3}, recall that $v_\lambda=M{\rm d}_\Omega^s$ in $\tilde{B}_R$ and find for all $x\in D_R$
\[\fpl w_\lambda(x) \ge \Big[\frac{1}{C}\Big(\frac{\theta}{\theta+1}\Big)^{2-p}L(u,M,R)^{p-1}-C(|\lambda|+\theta)M^{p-1}\Big]\frac{1}{R^s}-CM^{p-1},\]
with $C>1$, $\lambda\in[-\lambda_0,0)$, and $\theta>0$ still to be chosen. Now we fix $\gamma>1$ and assume $L(u,M,R)\le M\gamma$. As in Lemma \ref{lbs} we define $\delta,\kappa\in(0,1)$ (depending on $\gamma$), and accordingly set
\[\theta = \frac{\delta L(u,M,R)}{M} \in (0,\gamma],\]
which also satisfies
\[C\theta M^{p-1} \le \frac{1}{2C}\Big(\frac{\theta}{\theta+1}\Big)^{2-p}L(u,M,R)^{p-1}.\]
Using the above relations and setting
\[\sigma_\gamma =\min\Big\{\frac{\lambda_0}{\gamma},\,\frac{\kappa}{2C}\Big\}, \qquad C_\gamma = \frac{2C\sigma_\gamma}{\kappa},\]
we see that $C_\gamma>1>\sigma_\gamma>0$ depend on $N$, $p$, $s$, $\Omega$, and $\gamma$. Moreover, setting
\[\lambda = -\frac{\sigma_\gamma}{M}L(u,M,R) \in [-\lambda_0,0),\]
we get for all $x\in D_R$
\beq\label{ubs4}
\fpl w_\lambda(x) \ge -CM^{p-1}+\frac{\kappa}{2M^{2-p}}\,\frac{L(u,M,R)}{R^s}.
\eeq
Now, like in Lemma \ref{lbs} we let $H>0$ be as in \eqref{ubs1} and distinguish two cases according to the size of $L(u,M,R)$:
\begin{itemize}[leftmargin=1cm]
\item[$(a)$] If
\[L(u,M,R) \ge \frac{2}{\kappa}(CM+HM^{2-p})R^s,\]
then we apply \eqref{ubs4} and the definition of $w_\lambda$ to see that
\[\begin{cases}
\fpl u \le H \le \fpl w_\lambda & \text{in $D_R$} \\
u \le w_\lambda & \text{in $D_R^c$,}
\end{cases}\]
hence by Proposition \ref{wcp} $u\le w_\lambda$ in $\R^N$ and in particular for all $x\in D_R$
\[M-\frac{u(x)}{\ds(x)} \ge -M\lambda = \sigma_\gamma L(u,M,R).\]
\item[$(b)$] If
\[L(u,M,R) < \frac{2}{\kappa}(CM+HM^{2-p})R^s,\]
then from the choice of constants we have
\[\sigma_\gamma L(u,M,R) < C_\gamma(M+HM^{2-p})R^s,\]
hence the conclusion is trivial, since the right hand side in \eqref{ubs2} is negative.
\end{itemize}
In both cases we deduce \eqref{ubs2} and conclude.
\end{proof}

\noindent
Finally we prove the upper bound for any subsolution:

\begin{proposition}\label{uba}
Let $R>0$ be small enough depending on $N$, $p$, $s$, and $\Omega$, and let  $u\in\widetilde{W}^{s,p}(D_R)$, $M,K,H>0$ satisfy
\beq\label{uba1}
\begin{cases}
\fpl u \le \min\{K,H\} & \text{in $D_R$} \\
u \le M\ds & \text{in $\R^N$.}
\end{cases}
\eeq
Then, there exist $C>1>\sigma>0$, depending on $N$, $p$, $s$, and $\Omega$, s.t.\
\beq\label{uba2}
\inf_{D_{R/4}}\Big(M-\frac{u}{\ds}\Big) \ge \sigma L(u,M,R)-C(KR^s)^\frac{1}{p-1}-C(M+HM^{2-p})R^s.
\eeq
\end{proposition}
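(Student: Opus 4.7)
The plan is to mirror the structure of Proposition~\ref{lba}: with the two regimes (large vs.\ small excess) already handled separately in Lemmas~\ref{ubl} and~\ref{ubs}, I perform a dichotomy on the size of $L(u,M,R)$ relative to the threshold $M\gamma_2$ and combine the resulting bounds into a single inequality. First I fix $R>0$ small enough that both lemmas apply. The key point is that hypothesis \eqref{uba1} implies at once both \eqref{ubn1} (with upper bound $K$) and \eqref{ubs1} (with upper bound $H$), since $\min\{K,H\}\le K$ and $\min\{K,H\}\le H$. This is precisely why the statement is formulated with $\min\{K,H\}$: it lets us feed each of the two lemmas with its most useful right-hand side, an elasticity that will be exploited later in the oscillation estimate.

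In the large-excess regime $L(u,M,R)\ge M\gamma_2$, Lemma~\ref{ubl} produces constants $\gamma_2,C_2>1>\sigma_2>0$ such that
\[\inf_{D_{R/4}}\Big(M-\frac{u}{\ds}\Big) \ge \sigma_2\,L(u,M,R)-C_2(KR^s)^{\frac{1}{p-1}},\]
while in the complementary regime $L(u,M,R)\le M\gamma_2$, Lemma~\ref{ubs} applied with $\gamma=\gamma_2$ yields $C_{\gamma_2}>1>\sigma_{\gamma_2}>0$ with
\[\inf_{D_{R/2}}\Big(M-\frac{u}{\ds}\Big) \ge \sigma_{\gamma_2}\,L(u,M,R)-C_{\gamma_2}(M+HM^{2-p})R^s.\]
Since $D_{R/4}\subset D_{R/2}$, the second inequality descends to $D_{R/4}$ by monotonicity of the infimum. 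Setting $\sigma:=\min\{\sigma_2,\sigma_{\gamma_2}\}$ and $C:=\max\{C_2,C_{\gamma_2}\}$ (both depending only on $N$, $p$, $s$, $\Omega$), inequality \eqref{uba2} is satisfied in either case, because each of the two individual bounds is stronger than \eqref{uba2} in its regime (the missing term on the right-hand side being non-positive).

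I do not expect any serious obstacle: the substantive work---barrier construction, use of Proposition~\ref{wcp}, the local negativity input from Lemma~\ref{ubn}, and the delicate handling of the singularity of $t\mapsto|t|^{p-2}$ at $0$ via splitting into the two sign cases $u(y)\gtrless v_\lambda(x)$---has already been carried out in Lemmas~\ref{ubn}--\ref{ubs}. What remains is only the bookkeeping of constants and the observation that the two underlying sets $D_{R/4}$ and $D_{R/2}$ are nested, so that the smaller one $D_{R/4}$ figuring in the statement is compatible with both regimes.
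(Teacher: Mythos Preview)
Your proposal is correct and follows essentially the same approach as the paper's own proof: a dichotomy on $L(u,M,R)\gtrless M\gamma_2$, invoking Lemma~\ref{ubl} in the large-excess case and Lemma~\ref{ubs} (with $\gamma=\gamma_2$) in the small-excess case, then taking the worst constants. Your version is in fact slightly more explicit than the paper's, spelling out why $\min\{K,H\}$ feeds both lemmas and why the nesting $D_{R/4}\subset D_{R/2}$ reconciles the two domains.
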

\begin{proof}
The argument goes exactly as in Proposition \ref{lba}. Let $\gamma_2>1$ be as in Lemma \ref{ubl}:
\begin{itemize}[leftmargin=1cm]
\item[$(a)$] If $L(u,M,R)\ge M\gamma_2$, then we use Lemma \ref{ubl}: by \eqref{uba1} $u$ satisfies \eqref{ubn1}, so \eqref{ubl1} holds.
\item[$(b)$] If instead $L(u,M,R)<M\gamma_2$, then we use Lemma \ref{ubs}: by \eqref{uba1} $u$ satisfies \eqref{ubs1} as well, so \eqref{ubs2} holds.
\end{itemize}
Taking the smallest $\sigma$ and the biggest $C$, in either case we deduce \eqref{uba2} and conclude.
\end{proof}

\section{Oscillation estimate and conclusion}\label{sec5}

\noindent
The core of our result is the following oscillation estimate for the quotient $u/\ds$, where $u$ is a function s.t.\ $\fpl u$ is bounded (in a weak sense) in $\Omega$. The next result is analogous to \cite[Theorem 5.1]{IMS1}, but the proof here follows a different path due to the singular nature of the operator for $p\in(1,2)$ (see the discussion in Section \ref{sec1}):

\begin{proposition}\label{osc}
Let $u\in W^{s,p}_0(\Omega)$, $K>0$ satisfy
\beq\label{osc1}
\begin{cases}
|\fpl u| \le K & \text{in $\Omega$} \\
u = 0 & \text{in $\Omega^c$.}
\end{cases}
\eeq
Then, there exist $\alpha\in(0,s)$, $C,R_0>0$ depending on $N$, $p$, $s$, and $\Omega$, s.t.\ for all $x_0\in\partial\Omega$ and all $r\in(0,R_0)$
\[\underset{D_r(x_0)}{\rm osc}\,\frac{u}{\ds} \le CK^\frac{1}{p-1}r^\alpha.\]
\end{proposition}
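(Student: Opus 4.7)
The plan is to adapt Krylov's oscillation-decay method at dyadic scales $R_n = R_0/4^n$ around a fixed boundary point $x_0 \in \partial\Omega$, iterating the weak Harnack inequalities of Propositions \ref{lba} and \ref{uba} to produce a geometric decay of the oscillation of $u/\ds$ on $D_{R_n}(x_0)$. By $(p-1)$-homogeneity of $\fpl$ I normalize $K = 1$. Theorem \ref{opt} furnishes the global bound $|u|\le C\ds$, hence initial values $m_0, M_0$ depending only on $N, p, s, \Omega$. I then construct nondecreasing $\{m_n\}$ and nonincreasing $\{M_n\}$ with $m_n\ds \le u \le M_n\ds$ in $D_{R_n}$, such that $\omega_n := M_n - m_n$ satisfies a recurrence $\omega_{n+1} \le (1-\eta)\,\omega_n + E_n$ with $\eta \in (0,1)$ and a controlled error $E_n$.

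Since Propositions \ref{lba}--\ref{uba} require \emph{global} bilateral control by $\ds$, I apply them not to $u$ itself but to the truncation
\[
\tilde u_n(x) = \max\{m_n\ds(x),\,\min\{M_n\ds(x),\,u(x)\}\},
\]
which satisfies $m_n\ds \le \tilde u_n \le M_n\ds$ on $\R^N$ and coincides with $u$ in $D_{R_n}$. By Proposition \ref{spp}, $\fpl\tilde u_n$ in $D_{R_n}$ equals $\fpl u$ plus a nonlocal correction supported where $u$ has been truncated. Decomposing this set into dyadic shells $D_{R_k}\setminus D_{R_{k+1}}$ with $k < n$, the truncation magnitude on each shell is bounded by $\omega_k\,\ds(y)$; an elementary inequality for $t \mapsto t^{p-1}$ with $p\in(1,2)$, in the spirit of those in Appendix \ref{appa}, reduces the correction integrand to powers of $\omega_k\,\ds(y)/|x-y|^{N+ps}$. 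The key analytic tool here is the co-area formula coupled with Lemma \ref{lev}, which bounds the Hausdorff measure of $\{\ds = \xi\}\cap D_{R_k}$ by $CR_k^{N-1}$; this replaces a naive volumetric bound and yields a sharp estimate $|\fpl\tilde u_n|\le \tilde K_n$ in $D_{R_n}$, with $\tilde K_n$ controlled by the history $\{\omega_k\}_{k<n}$.

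Feeding this into Proposition \ref{lba} with parameters $K=\tilde K_n$ and $H=1$ (the latter being the bound $\fpl u \ge -1$ inherited in $D_{R_n}$, which is precisely why the hypotheses of \ref{lba} and \ref{uba} are formulated with $\min\{K,H\}$), and symmetrically Proposition \ref{uba} on the upper side, produces the oscillation recurrence above with
\[
E_n \le C\bigl[(\tilde K_n R_n^s)^{1/(p-1)} + (|m_n|+|M_n|+m_n^{2-p}+M_n^{2-p})R_n^s\bigr].
\]
The principal obstacle, as anticipated in the introduction, is that in the singular regime $p\in(1,2)$ the terms $m_n^{2-p}$ and $M_n^{2-p}$ do \emph{not} vanish as $m_n, M_n \to 0$ but rather blow up, so naive iteration fails. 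I resolve this by a strong induction on the ansatz $\omega_n \le C_\star R_n^\alpha$ for a small $\alpha\in(0,s)$, distinguishing two cases at each step: if $\max\{|m_n|,|M_n|\}\gtrsim R_n^\alpha$ then the singular factors $m_n^{2-p},M_n^{2-p}$ are dominated by $R_n^{\alpha(2-p)}$ and the term $m_n^{2-p}R_n^s$ is absorbed provided $\alpha\le s/(p-1)$; otherwise $\max\{|m_n|,|M_n|\}\lesssim R_n^\alpha$ which, since $\{m_n\}$ and $\{M_n\}$ share a common limit, forces $\omega_n$ already to be at the target order. The tail series entering $\tilde K_n$ converges under the ansatz provided $\alpha$ is chosen sufficiently small (and strictly less than $s$), closing the induction. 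Finally, for any $r\in(0,R_0)$ I pick $n$ with $R_{n+1}\le r<R_n$ to convert $\omega_n \le C R_n^\alpha$ into the stated estimate, reinstating the factor $K^{1/(p-1)}$ by undoing the normalization.
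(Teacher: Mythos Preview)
Your overall architecture is right, but the proposal contains a decisive error in how the parameters $K$ and $H$ of Propositions~\ref{lba} and~\ref{uba} are chosen. You assert that one may take $H=1$ because ``$\fpl u \ge -1$ is inherited in $D_{R_n}$''. But Propositions~\ref{lba}--\ref{uba} are applied to the \emph{truncation} (in the paper, to $u\vee m_n\ds$ and $u\wedge M_n\ds$ separately), not to $u$, and the truncation does \emph{not} satisfy $\fpl(\,\cdot\,)\ge -1$: the nonlocal correction supplied by Proposition~\ref{spp} spoils both sides of the differential inequality. The only bounds available on $\fpl$ of the truncated function are those you derive for the correction $E_\pm$ itself. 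This is exactly why Propositions~\ref{lba}--\ref{uba} carry $\min\{K,H\}$: one must produce \emph{two different} estimates for $E_\pm$, a crude one (via~\eqref{ei5}) giving $K_n\sim \mu^{p-1}S_{p-1}(\alpha)R_n^{(p-1)\alpha-s}$, and a refined one (via the co-area argument with Lemma~\ref{lev}) giving $h_n\sim \mu|m_n|^{p-2}S_1(\alpha)R_n^{\alpha-s}$, then take $K=K_n$ and $H=h_n$. The factor $|m_n|^{p-2}$ built into $h_n$ cancels against the $|m_n|^{2-p}$ in the error term $H|m_n|^{2-p}R_n^s$ of Proposition~\ref{lba}, leaving $S_1(\alpha)R_n^\alpha$; simultaneously $(K_nR_n^s)^{1/(p-1)}\sim S_{p-1}(\alpha)^{1/(p-1)}R_n^\alpha$. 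With a single bound used in both slots, one of the two error contributions is necessarily not $O(R_n^\alpha)$, and the iteration does not close.

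There is a second, elementary confusion feeding the misdiagnosis: you write that ``$m_n^{2-p}$ and $M_n^{2-p}$ \dots\ blow up as $m_n,M_n\to 0$''. For $p\in(1,2)$ the exponent $2-p$ is positive, so $t^{2-p}\to 0$ as $t\to 0^+$; the term $|m_n|^{2-p}R_n^s$ in your $E_n$ is harmless once $|m_n|\le 2C_0$. The genuine singularity sits inside the co-area estimate for the correction, which carries $|m_n|^{p-2}$ and would make $(\tilde K_nR_n^s)^{1/(p-1)}$ blow up if that estimate alone were fed into the $K$-slot. Your case analysis (``if $\max\{|m_n|,|M_n|\}\gtrsim R_n^\alpha$ then $m_n^{2-p}$ is dominated by $R_n^{\alpha(2-p)}$'') has the inequality backwards and, in any event, addresses the wrong term. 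A minor additional point: because Proposition~\ref{uba} only controls $D_{R/4}$, the correct dyadic factor is $8$, not $4$.
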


\begin{proof}
Up to a translation we may assume $x_0=0$. Also, since $\fpl$ is $(p-1)$-homogeneous, we assume $K=1$. Set for all $x\in\R^N$
\[v(x) = \begin{cases}
\displaystyle\frac{u(x)}{\ds(x)} & \text{if $x\in\Omega$} \\
0 & \text{if $x\in\Omega^c$.}
\end{cases}\]
We fix a radius $R_0=R_0(N,p,s,\Omega)$ satisfying
\[0 < R_0 < \min\Big\{\frac{\rho_\Omega}{8},\,\rho'_\Omega,\,1\Big\}\]
(with $\rho'_\Omega>0$ defined by Proposition \ref{bar}). Then, for all $n\in\N$ we set $R_n=R_0/8^n$ and as in Section \ref{sec2} 
\[D_n = D_{R_n}, \qquad \tilde B_n = \tilde B_{R_n/2},\]
so that $\tilde B_n\subset D_n$. For future use, we set for all $m\in\R$, $R>0$
\[E_+(u,m,R) = 2\sup_{x\in D_{R/2}}\int_{\{u<m\ds\}}\frac{(m\ds(y)-u(x))^{p-1}-(u(y)-u(x))^{p-1}}{|x-y|^{N+ps}}\,dy,\]
\[E_-(u,m,R) = 2\sup_{x\in D_{R/2}}\int_{\{u>m\ds\}}\frac{(u(y)-u(x))^{p-1}-(m\ds(y)-u(x))^{p-1}}{|x-y|^{N+ps}}\,dy,\]
and we note the symmetry relation
\beq\label{osc2}
E_+(u,m,R) = E_-(-u,-m,R).
\eeq
We claim that there exist $\alpha\in(0,s)$, $R_0>0$ (obeying the limitations above), and $\mu>1$, depending on $N$, $p$, $s$, and $\Omega$, and two sequences $(m_n)$, $(M_n)$ in $\R\setminus\{0\}$ (possibly depending also on $u$) s.t.\ $(m_n)$ is nondecreasing, $(M_n)$ is nonincreasing, and for all $n\in\N$
\beq\label{osc3}
m_n \le \inf_{D_n}\,v \le \sup_{D_n}\,v \le M_n, \qquad M_n-m_n = \mu R_n^\alpha.
\eeq
We argue by (strong) induction on $n$. First, let $n=0$. By \cite[Theorem 4.4]{IMS} there exists $C_0=C_0(N,p,s,\Omega)>1$ s.t.\ for all $x\in\Omega$
\[|v(x)| \le C_0.\]
For  $\alpha\in(0,s)$, $R_0>0$ to be better determined later, choose
\[\mu = \frac{2C_0}{R_0^\alpha}>1,\]
and set $M_0=-m_0=\mu R_0^\alpha/2$. Then clearly we have \eqref{osc3} at step $0$.
\vskip2pt
\noindent
Now fix $n\ge 0$, and assume that sequences $(m_k)$, $(M_k)$ are defined for $k=0,\ldots n$ and satisfy \eqref{osc3}. In particular, for all $k\in\{0,\ldots n\}$ we have
\begin{equation}\label{osc4}
|M_k|+|m_k|\le |M_0|+|m_0|\le \mu R_0^\alpha=2C_0.
\end{equation}
By \eqref{osc3} (at step $n$) we have $u\ge m_n\ds$ in $D_n$. So, by Proposition \ref{spp} we have $(u\vee m_n\ds)\in\widetilde{W}^{s,p}(D_{R_n/2})$. Using also \eqref{osc1} (recall that $K=1$), we see that for all $x\in D_{R_n/2}$
\begin{align*}
\fpl(u\vee m_n\ds)(x) &= \fpl u(x)+2\hspace{-1pt}\int_{\{u<m_n\ds\}}\hspace{-13pt}\frac{(u(x)-m_n\ds(y))^{p-1}\hspace{-1pt}-(u(x)-u(y))^{p-1}}{|x-y|^{N+ps}}dy \\
&\ge -1-E_+(u,m_n,R_n).
\end{align*}
Similarly we get $(u\wedge M_n\ds)\in\widetilde{W}^{s,p}(D_{R_n/2})$ and for all $x\in D_{R_n/2}$
\[\fpl(u\wedge M_n\ds)(x) \le 1+E_-(u,M_n,R_n).\]
In the next lines, we will provide some estimates for the quantities $E_+(u,m_n,R_n)$ and $E_-(u,M_n,R_n)$, assuming when necessary some restrictions on $m_n$, $M_n$, respectively. By the symmetry relation \eqref{osc2}, any estimate on $E_+$ will reflect on an analogous estimate on $E_-$. Preliminarily, as in \cite[Theorem 5.1]{IMS1}, for all $q>0$, $\alpha\in(0,s/q)$ we set
\[S_q(\alpha) = \sum_{j=0}^\infty\frac{(8^{\alpha j}-1)^q}{8^{sj}},\]
and note that the series above converges uniformly with respect to $\alpha$ and
\beq\label{osc5}
\lim_{\alpha\to 0^+}S_q(\alpha) = 0.
\eeq
First we focus on $E_+(u,m_n, R_n)$ and prove for such quantity an estimate involving $S_{p-1}(\alpha)$. Fix $x\in D_{R_n/2}$, $y\in\R^N$ s.t.\ $u(y)<m_n\ds(y)$, hence in particular $y\in D_n^c$ (by \eqref{osc3} at step $n$). Elementary geometric observations lead to
\[|x-y| \ge \frac{|y|}{2}, \qquad |y| \ge {\rm d}_\Omega(y).\]
By inequality \eqref{ei5} with $a=u(x)$, $b=u(y)$, and $c=m_n\ds(y)$ (note that $b\le c$) we have
\[(m_n\ds(y)-u(x))^{p-1}-(u(y)-u(x))^{p-1} \le 2^{2-p}(m_n\ds(y)-u(y))^{p-1}.\]
So, for all $x\in D_{R_n/2}$ we have
\begin{align*}
\int_{\{u<m_n\ds\}}&\frac{(m_n\ds(y)-u(x))^{p-1}-(u(y)-u(x))^{p-1}}{|x-y|^{N+ps}}\,dy \\
& \ \le C\int_{\{u<m_n\ds\}}\frac{(m_n\ds(y)-u(y))^{p-1}}{|y|^{N+ps}}\,dy \\
& \ \le C\int_{\Omega\cap D_n^c}\frac{(m_n-v(y))_+^{p-1}}{|y|^{N+s}}\,dy = C{\rm tail}_{p-1}^{p-1}((m_n-v)_+,R_n),
\end{align*}
where the tail is defined as in \eqref{tail} with $q=p-1$, and $C=C(N,p,s,\Omega)>0$. Note that the last quantity does not depend on $x$. We split the integral defining 
\[A_1 = \Omega\setminus D_1, \qquad A_k = D_{k-1}\setminus D_k \quad (k=2,\ldots n),\]
and noting that for some $C=C(N, p, s)>0$
\[\int_{A_k}\frac{1}{|y|^{N+s}}\,dy \le \int_{D_k^c}\frac{1}{|y|^{N+s}}\,dy \le \frac{C}{R_{k-1}^s}.\]
Then, for all $k\in\{1,\ldots n\}$, $y\in A_k$, we apply \eqref{osc3} and monotonicity of the sequences $(m_n)$, $(M_n)$ to get
\begin{align*}
m_n-v(y) &\le m_n-m_{k-1} \\
&\le (m_n-M_n)+(M_{k-1}-m_{k-1}) = \mu(R_{k-1}^\alpha-R_n^\alpha).
\end{align*}
Also, recall that $S_{p-1}(\alpha)$ converges due to $\alpha<s/(p-1)$. Therefore, splitting the integral, we have the following estimate for the tail:
\begin{align*}
{\rm tail}_{p-1}^{p-1}\big((m_n-v)_+,R_n\big) &=\sum_{k=1}^n \int_{A_k}\frac{(m_n-v(y))_+^{p-1}}{|y|^{N+s}}\,dy \\
&\le  \sum_{k=1}^n\int_{A_k}\frac{\mu^{p-1}(R_{k-1}^\alpha-R_n^\alpha)^{p-1}}{|y|^{N+s}}\,dy \\
&\le C\mu^{p-1}\sum_{k=1}^n\frac{(R_{k-1}^\alpha-R_n^\alpha)^{p-1}}{R_{k-1}^s} \\
&=  C\mu^{p-1}S_{p-1}(\alpha)R_n^{(p-1)\alpha-s}.
\end{align*}
Going back to $E_+(u,m_n,R_n)$, we have found $C=C(N,p,s,\Omega)>0$ s.t.\
\beq\label{osc6}
E_+(u,m_n, R_n) \le  C\mu^{p-1}S_{p-1}(\alpha)R_n^{(p-1)\alpha-s}.
\eeq
The estimate in \eqref{osc6}, though fairly general, is not fully satisfactory, as it involves an exponent of the radius which is less than $\alpha-s$. So we need another estimate of $E_+(u,m_n,R_n)$ involving $S_1(\alpha)$, which we first prove under the assumption $m_n>0$. For any $x\in D_{R_n/2}$ we split the integral appearing in $E_+(u,m_n,R_n)$, defining the subdomains $A_k$ ($k=1,\ldots n$) and recalling that $\{u<m_n \ds\}\subseteq D_n^c$:
\begin{align*}
\int_{\{u<m_n\ds\}}&\frac{(m_n\ds(y)-u(x))^{p-1}-(u(y)-u(x))^{p-1}}{|x-y|^{N+ps}}\,dy \\
& \ \le \sum_{k=1}^n\int_{A_k}\frac{\big[(m_n\ds(y)-u(x))^{p-1}-(u(y)-u(x))^{p-1}\big]_+}{|x-y|^{N+ps}}\,dy =\sum_{k=1}^n I_k(x).
\end{align*}
For all $x\in D_{R_n/2}$, $y\in A_k$ we have 
\[|x-y| \ge R_{k}-\frac{R_n}{2}\ge \frac{R_k}{2}=\frac{R_{k-1}}{16}\]
and by \eqref{osc3} at step $k-1$ it holds $u(y) \ge m_{k-1}\ds(y)$.
So we find $C=C(N, p, s)$ s.t.\
\[I_k(x) \le \frac{C}{R_{k-1}^{N+ps}}\int_{A_k}\big[(m_n\ds(y)-u(x))^{p-1}-(m_{k-1}\ds(y)-u(x))^{p-1}\big]\,dy\]
We recall that $m_n>0$, while we have no sign information on $m_{k-1}$. So we distinguish two cases:
\begin{itemize}[leftmargin=1cm]
\item[$(a)$] If $m_{k-1}\le m_n/2$ (including $m_{k-1}\le 0$), then we have
\[m_n-m_{k-1} \ge \frac{m_n}{2}.\]
So for all $y\in A_k$ we use \eqref{ei5} with $a=u(x)$, $b=m_{k-1}\ds(y)$, and $c=m_n\ds(y)$ ($b\le c$) to find
\begin{align*}
(m_n\ds(y)-u(x))^{p-1}-(m_{k-1}\ds(y)-u(x))^{p-1} &\le 2^{2-p}(m_n-m_{k-1})^{p-1}{\rm d}_\Omega^{(p-1)s}(y) \\
&\le 4^{2-p}\, \frac{m_n-m_{k-1}}{m_n^{2-p}}\, {\rm d}_\Omega^{(p-1)s}(y).
\end{align*}
Plugging such inequality into the previous estimate of $I_k(x)$ we get, for $k>1$,
\begin{align*}
I_k(x) &\le \frac{ C}{R_{k-1}^{N+ps}}\int_{A_k}\frac{m_n-m_{k-1}}{m_n^{2-p}}{\rm d}_\Omega^{(p-1)s}(y)\, dy \\
&\le \frac{C}{R_{k-1}^s}\,\frac{m_n-m_{k-1}}{m_n^{2-p}}
\end{align*}
with $C=C(N,p,s,\Omega)>0$. Besides, for $k=1$ we have
\begin{align*}
I_1(x) &\le \frac{C}{R_0^{N+ps}}\int_\Omega\frac{m_n-m_0}{m_n^{2-p}}{\rm d}_\Omega^{(p-1)s}(y)\,dy \\
&\le \frac{C}{R_0^{N+(p-1)s}R_0^s}\frac{m_n-m_0}{m_n^{2-p}},
\end{align*}
again with $C=C(N,p,s,\Omega)>0$. Since $R_0<1$, in both cases we find $C=C(N,p,s,\Omega)>0$ s.t.\
\[I_k(x) \le \frac{C}{R_0^{N+(p-1)s}R_{k-1}^s}\,\frac{m_n-m_{k-1}}{m_n^{2-p}}.\]
\item[$(b)$] If $m_n/2<m_{k-1}\le m_n$, then necessarily $k>1$ (since $m_n>0>m_0$). Set for $x\in D_{R_n/2}$ and $y\in A_k$
\[h_x(y) = (m_n\ds(y)-u(x))^{p-1}-(m_{k-1}\ds(y)-u(x))^{p-1} \ge 0.\]
Recall that $|\nabla{\rm d}_\Omega|=1$ a.e.\ in $A_k$. Therefore, by the coarea formula we have for all $x\in D_{R_n/2}$
\beq\label{osc7}
\int_{A_k}h_x(y)\,dy =\int_{A_k}h_x(y)|\nabla{\rm d}_\Omega(y)|\,dy = \int_{\R}\int_{A_k\cap\{{\rm d}_\Omega=\xi\}}h_x(y)\,d{\mathcal H}^{N-1}\,d\xi,
\eeq
where $\mathcal{H}^{N-1}$ is $(N-1)$-dimensional Hausdorff measure on  $A_k\cap\{{\rm d}_\Omega=\xi\}$. By Lemma \ref{lev}, there exists $C=C(\Omega)>0$ s.t.\ for all $k>1$ and all $\xi>0$
\[{\mathcal H}^{N-1}\big(A_k\cap\{{\rm d}_\Omega=\xi\}\big)\le CR_{k-1}^{N-1}.\]
Using the above measure-theoretic bound in \eqref{osc7}, we find $C=C(N,p,s,\Omega)>0$ s.t.\ for all $x\in D_{R_n/2}$
\[\int_{A_k}h_x(y)\, dy\le CR_{k-1}^{N-1}\int_0^{R_{k-1}}\big[(m_n\xi^s-u(x))^{p-1}-(m_{k-1}\xi^s-u(x))^{p-1}\big]\,d\xi.\]
Set for all $m>0$, $t\in\R$
\[\psi_k(m,t) = \int_0^{R_{k-1}}(m\xi^s-t)^{p-1}\,d\xi.\]
Then we have $\psi_k(\cdot,t)\in C^1(0,\infty)$ with derivative
\[\frac{\partial\psi_k}{\partial m}(m,t) = (p-1)m^{p-2}\int_0^{R_{k-1}}\Big|\xi^s-\frac{t}{m}\Big|^{p-2}\xi^s\,d\xi.\]
We estimate the last integral by using the change of variable $\xi=R_{k-1}\eta$:
\begin{align*}
\int_0^{R_{k-1}}\Big|\xi^s-\frac{t}{m}\Big|^{p-2}\xi^s\,d\xi &= R_{k-1}^{1+s}\int_0^1\Big|R_{k-1}^s\eta^s-\frac{t}{m}\Big|^{p-2}\eta^s\,d\eta \\
&= R_{k-1}^{(p-1)s+1}\int_0^1\Big|\eta^s-\frac{t}{mR_{k-1}^s}\Big|^{p-2}\eta^s\,d\eta.
\end{align*}
By the further change of variable $\theta=\eta^s$ and \eqref{ei5}, we have for all $\tau\in\R$
\begin{align*}
\int_0^1|\eta^s-\tau|^{p-2}\eta^s\,d\eta &\le \int_0^1|\eta^s-\tau|^{p-2}\eta^{s-1}\,d\eta \\
&= \int_0^1|\theta-\tau|^{p-2}\frac{d\theta}{s} \\
&= \frac{(1-\tau)^{p-1}-\tau^{p-1}}{(p-1)s} \le \frac{2^{2-p}}{(p-1)s}.
\end{align*}
In conclusion, there exists $C=C(p,s)>0$ s.t.\ for all $m>0$, $t\in\R$, $k>1$
\beq\label{osc8}
\frac{\partial\psi_k}{\partial m}(m,t) \le CR_{k-1}^{(p-1)s+1}.
\eeq
We now go back to $I_k(x)$. We apply Lagrange's theorem to $\psi_k(\cdot,u(x))$ in the interval $[m_{k-1},m_n]\subset(0,\infty)$, along with \eqref{osc8} and the previous relations, to get
\begin{align*}
I_k (x)&\le \frac{C}{R_{k-1}^{N+ps}}\int_{A_k}h_x(y)\,dy \\
&\le \frac{C R_{k-1}^{N-1}}{R_{k-1}^{N+ps}}\big[\psi_k(m_n,u(x))-\psi_k(m_{k-1},u(x))\big] \\
&\le \frac{C}{ R_{k-1}^{1+ps}}\max_{m_{k-1}\le m\le m_n}\frac{\partial\psi_k}{\partial m}(m,u(x))(m_n-m_{k-1}) \\
&\le \frac{C}{R_{k-1}^s}\,\frac{m_n-m_{k-1}}{m_{k-1}^{2-p}}.
\end{align*}
Recalling that $m_{k-1}\ge m_n/2$, we conclude that there exists $C=C(N,p,s,\Omega)>0$ s.t.\
\[I_k(x) \le \frac{C}{R_{k-1}^s}\,\frac{m_n-m_{k-1}}{m_{n}^{2-p}}.\]
\end{itemize}
In both cases $(a)$, $(b)$, recalling that $R_0<1$, we have reached the same estimate of $I_k(x)$, which is independent from $x\in D_{R_n/2}$ and of the form
\[I_k(x) \le \frac{C}{R_0^{N+(p-1)s}R_{k-1}^s}\,\frac{m_n-m_{k-1}}{m_{n}^{2-p}},\]
for  $C=C(N,p,s,\Omega)>0$ independent of $R_0$. Thus, by \eqref{osc3} at steps $k=0,\ldots n$ we have
\begin{align*}
E_+(u,m_n, R_n) &\le  \sum_{k=1}^n\frac{C}{R_0^{N+(p-1)s}R_{k-1}^s}\,\frac{m_n-m_{k-1}}{m_n^{2-p}} \\
&\le  \frac{C\mu}{R_0^{N+(p-1)s}m_n^{2-p}}\sum_{k=1}^n\frac{R_{k-1}^\alpha-R_n^\alpha}{R_{k-1}^s}.
\end{align*}
Recalling the definition of $S_1(\alpha)$, whenever $m_n>0$ we have the following alternative estimate involving a constant $C=C(N,p,s,\Omega)>0$:
\beq\label{osc9}
E_+(u,m_n, R_n) \le \frac{C\mu}{m_n^{2-p}}\frac{S_1(\alpha)}{R_0^{N+(p-1)s}}R_n^{\alpha-s}.
\eeq
Next we focus on the quantity $E_-(u,M_n, R_n)$. Recalling the symmetry relation \eqref{osc2}, we have
\[E_-(u,M_n, R_n) = E_+(-u,-M_n, R_n).\]
Note that the function $-u\in W^{s,p}_0(\Omega)$ satisfies \eqref{osc1}, and by \eqref{osc3} at step $n$ we have for all $x\in D_n$
\[-u(x) \ge -M_n\ds(x).\]
So, arguing exactly as in \eqref{osc6} we find $C=C(N,p,s,\Omega)>0$ s.t.\
\beq\label{osc10}
E_-(u,M_n, R_n) \le C\mu^{p-1}S_{p-1}(\alpha)R_n^{(p-1)\alpha-s}.
\eeq
The argument for a \eqref{osc9}-type estimate for $E_-(u,M_n, R_n)$ is slightly different. We assume $M_n>0$ and for all $x\in D_{R_n/2}$ we define $A_k$ ($k=1,\ldots n$) as above and split the corresponding integral as follows:
\begin{align*}
\int_{\{u>M_n\ds\}}&\frac{(u(y)-u(x))^{p-1}-(M_n\ds(y)-u(x))^{p-1}}{|x-y|^{N+ps}}\,dy \\
&\ \le \sum_{k=1}^n\int_{A_k}\frac{\big[(u(y)-u(x))^{p-1}-(M_n\ds(y)-u(x))^{p-1}\big]_+}{|x-y|^{N+ps}}\,dy \\
& \ \le \sum_{k=1}^n\frac{C}{R_{k-1}^{N+ps}}\int_{A_k}\big[(M_{k-1}\ds(y)-u(x))^{p-1}-(M_n\ds(y)-u(x))^{p-1}\big]\,dy.
\end{align*}
For $k>1$ we can argue as in case $(b)$ above to get, thanks to $M_{k-1}\ge M_n$ and \eqref{osc8}, that for all $x\in D_{R_n/2}$
\begin{align*}
\frac{C}{R_{k-1}^{N+ps}}&\int_{A_k}\big[(M_{k-1}\ds(y)-u(x))^{p-1}-(M_n\ds(y)-u(x))^{p-1}\big]\,dy \\
& \ \le \frac{C}{R_{k-1}^{1+ps}}\max_{M_n\le m\le M_{k-1}}\frac{\partial\psi_k}{\partial m}(m, u(x)) (M_{k-1}-M_n) \\
&\ \le \frac{C}{R_{k-1}^s}\frac{M_{k-1}-M_n}{M_n^{2-p}}.
\end{align*}
Now turn to $k=1$. First, by \cite[Corollary 2]{K1} we have the following uniform bound for all $\xi\ge 0$ and some $C=C(N,\Omega)>0$:
\[{\mathcal H}^{N-1}(\{{\rm d}_\Omega=\xi\}) \le C.\]
We apply the coarea formula as in case $(b)$ above to get
\[\int_{A_1}\big[(M_{0}\ds(y)-u(x))^{p-1}-(M_n\ds(y)-u(x))^{p-1}\big]\,dy \le C\big[\psi_1(M_0, u(x))-\psi_1(M_n, u(x))\big],\]
where $C=C(N,p,s,\Omega)>0$ and for all $m>0$, $t\in\R$
\[\psi_1(m,t) = \int_0^{{\rm diam}\, (\Omega)}(m\xi^s-t)^{p-1}\,d\xi.\]
Arguing as in \eqref{osc8}, we find $C=C(N,p,s,\Omega)>0$ s.t.\ for all $m>0$, $t\in\R$
\[\frac{\partial \psi_1}{\partial m}(m,t)\le \frac{C}{m^{2-p}}.\]
So, by Lagrange's theorem we get
\begin{align*}
\int_{A_1}\hspace{-1pt}(M_0\ds(y)-u(x))^{p-1}\hspace{-1pt}-(M_n\ds(y)-u(x))^{p-1}\,dy &\le C\hspace{-3pt}\max_{M_n\le m\le M_0}\hspace{-2pt}\frac{\partial\psi_1}{\partial m}(m,u(x))(M_0-M_n) \\
&\le C\frac{M_0-M_n}{M_n^{2-p}}.
\end{align*}
Since $R_0<1$, we have
\[\int_{\{u>M_n\ds\}}\hspace{-6pt}\frac{(u(y)-u(x))^{p-1}-(M_n\ds(y)-u(x))^{p-1}}{|x-y|^{N+ps}}\,dy \le 
\frac{C}{R_0^{N+(p-1)s}}\sum_{k=1}^n\frac{1}{R_{k-1}^s}\frac{M_{k-1}-M_n}{M_n^{2-p}}.\]
Thus, whenever $M_n>0$ we have the following estimate, with $C=C(N,p,s,\Omega)>0$:
\beq\label{osc11}
E_-(u,M_n, R_n) \le  \frac{C\mu}{M_n^{2-p}}\frac{S_1(\alpha)}{R_0^{N+(p-1)s}}R_n^{\alpha-s}.
\eeq
All the estimates \eqref{osc6}, \eqref{osc9}, \eqref{osc10}, and \eqref{osc11} hold if $0<m_n<M_n$. We briefly hint at the remaining cases (recalling that by \eqref{osc3} at step $n$ we have $m_n,M_n\neq 0$ and $m_n<M_n$):
\begin{itemize}[leftmargin=1cm]
\item[$(A)$] If $m_n<M_n<0$, then conversely $0<-M_n<-m_n$, so we turn to the function $-u$ which solves \eqref{osc1} and satisfies for all $x\in D_n$
\[-M_n\ds(x) \le -u(x) \le -m_n\ds(x).\]
Arguing as above and using \eqref{osc2}, we find estimates of the type \eqref{osc6} - \eqref{osc11} for the quantities
\[E_+(u,m_n, R_n) = E_-(-u,-m_n, R_n), \qquad E_-(u,M_n, R_n) = E_+(-u,-M_n, R_n).\]
\item[$(B)$] If $m_n<0<M_n$, then we can derive \eqref{osc10}, \eqref{osc11} as above. Besides, passing to $-u$ and noting that $-m_n>0$, we find estimates like \eqref{osc6}, \eqref{osc9} for
\[E_+(u,m_n, R_n) = E_-(-u,-m_n, R_n).\]
\end{itemize}
Summarizing, in any case we find $C=C(N,p,s,\Omega)>0$ s.t.\ 
\[E_+(u,m_n, R_n) \le C\min\Big\{\mu^{p-1} S_{p-1}(\alpha)R_n^{(p-1)\alpha-s},\, \frac{\mu}{|m_n|^{2-p}}\frac{S_1(\alpha)}{R_0^{N+(p-1)s}}R_n^{\alpha-s}\Big\},\]
\[E_-(u,M_n, R_n) \le C\min\Big\{\mu^{p-1}S_{p-1}(\alpha)R_n^{(p-1)\alpha-s},\,\frac{\mu}{|M_n|^{2-p}}\frac{S_1(\alpha)}{R_0^{N+(p-1)s}}R_n^{\alpha-s}\Big\}.\]
The next step consists in applying the lower and upper bounds proved in Sections \ref{sec3}, \ref{sec4} to the functions $u\vee m_n\ds$, $u\wedge M_n\ds$, respectively. To this end, note that  both Proposition \ref{lba} and Proposition \ref{uba}, while {\em separately} proved in the case $m,M>0$, actually hold true {\em together} for arbitrary $m,M\ne 0$: indeed, if $(u,m)$ fulfill the assumptions of Proposition \ref{lba} for some $m<0$, then Proposition \ref{uba} applies to $(-u,-m)$, and \eqref{uba2} is then equivalent to \eqref{lba2} with $|m|$ on the right hand side; similarly, if $(u,M)$ satisifes the assumptions of Proposition \ref{uba} for some $M<0$, then Proposition \ref{lba} applies to $(-u,-M)$, giving \eqref{uba2} with $|M|$ on the right hand side.
\vskip2pt
\noindent
Set then, for $C=C(N, p, s, \Omega)>1$ given in the previous bounds for $E_\pm$, 
\[K_n = 1+C\mu^{p-1}S_{p-1}(\alpha)R_n^{(p-1)\alpha-s},\]
\[h_n = 1+\frac{C\mu}{|m_n|^{2-p}}\frac{S_1(\alpha)}{R_0^{N+(p-1)s}}R_n^{\alpha-s},\]
\[H_n = 1+\frac{C\mu}{|M_n|^{2-p}}\frac{S_1(\alpha)}{R_0^{N+(p-1)s}}R_n^{\alpha-s}.\]
By the previous bound on $\fpl(u\vee m_n\ds)$ and the estimates on $E_+(u,m_n,R_n)$ in \eqref{osc6} and \eqref{osc9}, we have the following \eqref{lba1}-type inequality:
\[\begin{cases}
\fpl(u\vee m_n\ds) \ge -\min\{K_n,h_n\} & \text{in $D_{R_n/2}$} \\
u\vee m_n\ds \ge m_n\ds & \text{in $\R^N$.}
\end{cases}\]
We apply Proposition \ref{lba} with $R=R_n/2$, $m=m_n$, $K=K_n$, $H=h_n$. Recalling that $R_n/8=R_{n+1}$ and that $u\ge m_n\ds$ in $D_n$ by \eqref{osc3} at step $n$, with slightly rephrased constants, we find $ C>1>\sigma>0$, depending on $N$, $p$, $s$, and $\Omega$ (but not on $R_0$), s.t.\
\[\inf_{D_{n+1}}\big(v-m_n\big) \ge \sigma L\Big(u,m_n,\frac{R_n}{2}\Big)-C(K_nR_n^s)^\frac{1}{p-1}- C(|m_n|+h_n|m_n|^{2-p})R_n^s.\]
Similarly, by \eqref{osc10} and \eqref{osc11} we have the \eqref{uba1}-type inequality
\[\begin{cases}
\fpl(u\wedge M_n\ds) \le \min\{K_n,H_n\} & \text{in $D_{R_n/2}$} \\
u\wedge m_n\ds \le M_n\ds & \text{in $\R^N$.}
\end{cases}\]
By Proposition \ref{uba}, for $C>1$ even bigger and $\sigma\in(0,1)$ even smaller if necessary, all depending on $N, p, s, \Omega$ (but not on $R_0$), we have
\[\inf_{D_{n+1}}\big(M_n-v\big) \ge \sigma L\Big(u,M_n,\frac{R_n}{2}\Big)-C(K_nR_n^s)^\frac{1}{p-1}- C(|M_n|+H_n|M_n|^{2-p})R_n^s.\]
Comparing the definitions of $K_n$, $h_n$, and $H_n$ with the lower-upper bounds above, we realize that the latter show a certain degree of homogeneity, which we are now going to exploit in the final steps of the proof. We proceed to estimating the oscillation of $v=u/\ds$ in $D_{n+1}$ (recalling the definition of the excess in \eqref{lum}):
\begin{align*}
\underset{D_{n+1}}{\rm osc}\,v &= \sup_{D_{n+1}}v-\inf_{D_{n+1}}v\\
&\le (M_n-m_n)-\inf_{D_{n+1}}\big(v-m_n\big)-\inf_{D_{n+1}}\big(M_n-v\big) \\
&\le (M_n-m_n)-\sigma\Big[L\Big(u,m_n,\frac{R_n}{2}\Big)+L\Big(u,M_n,\frac{R_n}{2}\Big)\Big] \\
& \quad + C(K_nR_n^s)^\frac{1}{p-1}+ C\big[|m_n|+h_n|m_n|^{2-p}+|M_n|+H_n|M_n|^{2-p}\big]R_n^s \\
&\le (M_n-m_n)-\sigma\Big[\Big(\dashint_{\tilde B_n}(v(x)-m_n)^{p-1}\,dx\Big)^\frac{1}{p-1}+\Big(\dashint_{\tilde B_n}(M_n-v(x))^{p-1}\,dx\Big)^\frac{1}{p-1}\Big] \\
& \quad +C\mu S_{p-1}^\frac{1}{p-1}(\alpha)R_n^{\alpha}   +C \frac{\mu S_1(\alpha)}{R_0^{N+(p-1)s}}R_n^{\alpha} + C\mu R_0^\alpha R_n^s,
\end{align*}
 where in the last step we used \eqref{osc4}. By subadditivity of $t\mapsto t^{p-1}$ on $[0,\infty)$, for all $x\in\tilde B_n$ it holds
\[(M_n-m_n)^{p-1} \le (M_n-v(x))^{p-1}+(v(x)-m_n)^{p-1}.\]
Using inequality \eqref{ei4}, we thus have
\begin{align*}
\Big(\dashint_{\tilde B_n}(v(x)-m_n)^{p-1}\,dx\Big)^\frac{1}{p-1}&+\Big(\dashint_{\tilde B_n}(M_n-v(x))^{p-1}\,dx\Big)^\frac{1}{p-1} \\
& \ge 2^\frac{p-2}{p-1}\Big[\dashint_{\tilde B_n}(v(x)-m_n)^{p-1}\,dx+\dashint_{\tilde B_n}(M_n-v(x))^{p-1}\,dx\Big]^\frac{1}{p-1} \\
& \ge 2^\frac{p-2}{p-1}\Big[\dashint_{\tilde B_n}(M_n-m_n)^{p-1}\,dx\Big]^\frac{1}{p-1} = 2^\frac{p-2}{p-1}\big(M_n-m_n\big).
\end{align*}
Plugging this inequality into the previous oscillation estimate, using \eqref{osc3}, and recalling that $R_n\le R_0$ and $\mu>1$, we have
\begin{align*}
\underset{D_{n+1}}{\rm osc}\,v &\le \Big(1-2^\frac{p-2}{p-1}\sigma\Big)(M_n-m_n)+C\mu\Big[S_{p-1}^\frac{1}{p-1}(\alpha)+\frac{S_1(\alpha)}{R_0^{N+(p-1)s}}\Big]R_n^\alpha+ C\mu R_0^\alpha R_n^s \\
&\le \Big[1-2^\frac{p-2}{p-1}\sigma+CS_{p-1}^\frac{1}{p-1}(\alpha)+C\frac{S_1(\alpha)}{R_0^{N+(p-1)s}}\Big]8^\alpha\mu R_{n+1}^\alpha+ C\mu R_0^{s}  R_{n+1}^\alpha,
\end{align*}
where in the last passage we used the inequality
\[R_0^\alpha R_n^s \le 8^\alpha R_0^s R_{n+1}^\alpha.\]
So far, $\alpha\in(0,s)$ and $R_0>0$ (obeying the initial bounds) are not subject to any further restriction. We now choose $R_0=R_0(N, p, s, \Omega)>0$ small enough s.t.\ 
\[CR_0^{s} \le 2^{\frac{p-2}{p-1}-1}\sigma,\]
and correspondingly, thanks to \eqref{osc5}, choose $\alpha=\alpha(N, p, s, \Omega)\in (0,s)$ so small that 
\[\Big[1-2^\frac{p-2}{p-1}\sigma+C S_{p-1}^\frac{1}{p-1}(\alpha)+C\frac{S_1(\alpha)}{R_0^{N+(p-1)s}}\Big]8^\alpha < 1-2^{\frac{p-2}{p-1}-1}\sigma.\]
By virtue of such relations we have
\[\underset{D_{n+1}}{\rm osc}\,v \le \mu R_{n+1}^\alpha.\]
Therefore, we may fix $m_{n+1},M_{n+1}\in[m_n,M_n]$ s.t.\
\[m_{n+1} \le \inf_{D_{n+1}}v \le \sup_{D_{n+1}}v \le M_{n+1}, \qquad M_{n+1}-m_{n+1} = \mu R_{n+1}^\alpha,\]
thus proving \eqref{osc3} at step $n+1$.
\vskip2pt
\noindent
Finally, let $r\in(0,R_0)$. Then, there exists $n\in\N$ s.t.\ $R_{n+1} \le r < R_n$. By \eqref{osc3} we have
\[\underset{D_r}{\rm osc}\,v \le \underset{D_n}{\rm osc}\,v \le \mu R_n^\alpha \le \mu 8^\alpha r^\alpha.\]
Thus, the conclusion holds with $\alpha\in(0,s)$ and $C=\mu 8^\alpha>1$, both depending on $N$, $p$, $s$, and $\Omega$.
\end{proof}

\noindent
We can now prove our main result. The argument is in fact identical to that of \cite[Theorem 1.1]{IMS1}, but we include it here for completeness:
\vskip4pt
\noindent
{\em Proof of Theorem \ref{main}.} As said in Section \ref{sec1}, case $p\ge 2$ is just \cite[Theorem 1.1]{IMS1}, so we assume $p\in(1,2)$. Let $f\in L^\infty(\Omega)$, $u\in W^{s,p}(\Omega)$ be a solution of \eqref{dir}. Set $K=\|f\|_{L^\infty(\Omega)}$, so $u$ satisfies \eqref{osc1}. By homogeneity of $\fpl$, we may assume $K=1$. Set as before for all $x\in\R^N$
\[v(x) = \begin{cases}
\displaystyle\frac{u(x)}{\ds(x)} & \text{if $x\in\Omega$} \\
0 & \text{if $x\in\Omega^c$.}
\end{cases}\]
For $\alpha\in (0, s)$ being given in Proposition \ref{osc}, we aim at applying Lemma \ref{ros} to $v$ with $\gamma=\alpha$. 
As already recalled (see \eqref{opt1}), we have $v\in L^\infty(\Omega)$ and there is $C=C(N,p,s,\Omega)>0$ s.t.\
\[\|v\|_{L^\infty(\Omega)}\le C,\]
hence $v$ satisfies hypothesis \ref{ros1} of Lemma \ref{ros}. In order to check \ref{ros2}, let $x_1\in\Omega$ be s.t.\ ${\rm d}_\Omega(x_1)=4R$, and arguing as in Theorem \ref{opt} (with $\gamma=\alpha$) we get
\[[u]_{C^\alpha(B_{R/8}(x_1))} \le CR^{s-\alpha},\]
aith $C=C(N,p,s,\Omega)>0$. Besides, by \cite[p.\ 292]{ROS} we have
\[\Big[\frac{1}{\ds}\Big]_{C^\alpha(B_{R/8}(\bar x))} \le \frac{C}{R^{\alpha+s}}.\]
Combining the previous properties, we have for all $x,y\in B_{R/8}(\bar x)$
\begin{align*}
\frac{|v(x)-v(y)|}{|x-y|^\alpha} &\le \frac{|u(x)-u(y)|}{|x-y|^\alpha}\frac{1}{\ds(x)}+\frac{|u(y)|}{|x-y|^\alpha}\Big|\frac{1}{\ds(x)}-\frac{1}{\ds(y)}\Big| \\
&\le [u]_{C^\alpha(B_{R/8}(\bar x))}\Big\|\frac{1}{\ds}\Big\|_{L^\infty(B_{R/8}(\bar x))}+\|u\|_{L^\infty(B_{R/8})}\Big[\frac{1}{\ds}\Big]_{C^\alpha(B_{R/8}(\bar x))} \\
&\le \frac{C R^s}{R^\alpha}\Big(\frac{8}{R}\Big)^s+CR^s\frac{C}{R^{\alpha+s}} \le \frac{C}{R^{\alpha}}.
\end{align*}
So, $v$ satisfies hypothesis \ref{ros2} of Lemma \ref{ros} with $\nu=\gamma=\alpha$. Finally, fix $x_0\in\partial\Omega$, $r>0$, and let   $C,R_0>0$ be as in Proposition \ref{osc}. We distinguish two cases:
\begin{itemize}[leftmargin=1cm]
\item[$(a)$] If $r<R_0$, then by Proposition \ref{osc} we have
\[\underset{D_r(x_0)}{\rm osc}\,v \le Cr^\alpha.\]
\item[$(b)$] If $r\ge R_0$, then simply
\[\underset{D_r(x_0)}{\rm osc}\,v \le 2\|v\|_{L^\infty(\Omega)} \le \frac{C}{R_0^\alpha}r^\alpha.\]
\end{itemize}
In any case, $v$ satisfies hypothesis \ref{ros3} of Lemma \ref{ros} with $\gamma=\alpha$ and $M=M(N, p, s, \Omega)>0$. The corresponding exponent in Lemma \ref{ros} turns out to be $\alpha/2\in(0,s)$, therefore
\[[v]_{C^{\alpha/2}(\overline\Omega)} \le C,\]
for $C=C(N, p, s, \Omega)>0$, which, along with the previous bound on $\|v\|_{L^\infty(\Omega)}$, yields the conclusion, up to replacing $\alpha$ with $\alpha/2$. \qed

\appendix

\section{Some elementary inequalities}\label{appa}

\noindent
Here we recall some useful inequalities, specifically designed to deal with the singular case. In the following, $p\in(1,2)$:
\begin{itemize}[leftmargin=1cm]
\item For all $a,b\ge 0$ we have
\beq\label{ei1}
a^{p-1}-b^{p-1} \ge (a-b)^{p-1}-a^{p-1}.
\eeq
Indeed, if $a>b$ then by monotonicity of $t\mapsto t^{p-1}$ we have
\[a^{p-1} \ge (a-b)^{p-1}, \qquad -b^{p-1} \ge -a^{p-1},\]
hence \eqref{ei1}. If $a\le b$, then by subadditivity of $t\mapsto|t|^{p-1}$ we have
\[b^{p-1} \le (b-a)^{p-1}+a^{p-1},\]
hence
\[a^{p-1}-b^{p-1} \ge (a-b)^{p-1},\]
which in turn implies \eqref{ei1}.
\item For all $a,b\ge 0$, $\theta>0$ we have
\beq\label{ei2}
\frac{a}{(a+b)^{2-p}} \ge \Big(\frac{\theta}{\theta+1}\Big)^{2-p}a^{p-1}-\frac{\theta}{(\theta+1)^{2-p}}b^{p-1}.
\eeq
Indeed, if $a<\theta b$ then
\[\Big(\frac{\theta}{\theta+1}\Big)^{2-p}a^{p-1} < \frac{\theta}{(\theta+1)^{2-p}}b^{p-1},\]
so \eqref{ei2} is trivial. If $a\ge\theta b$ then
\[(a+b)^{2-p} \le \Big(1+\frac{1}{\theta}\Big)^{2-p}a^{2-p},\]
which in turn implies \eqref{ei2}.
\item For all $a\in\R$, $b\ge 0$ we have
\beq\label{ei3}
(a+b)^{p-1}-a^{p-1} \ge b^{p-1}-|a|^{p-1}.
\eeq
Indeed, if $a\ge 0$ then by monotonicity of $t\mapsto t^{p-1}$ \eqref{ei3} is trivial. If $-b\le a<0$ then by subadditivity of $t\mapsto t^{p-1}$ on $[0,\infty)$ we have
\[b^{p-1} \le (a+b)^{p-1}+(-a)^{p-1},\]
hence \eqref{ei3}. Finally, if $a<-b$ then by monotonicity again
\[(a+b)^{p-1}+(-a)^{p-1} \ge a^{p-1}+b^{p-1},\]
which is equivalent to \eqref{ei3}.
\item For all $a,b\ge 0$ we have
\beq\label{ei4}
(a+b)^\frac{1}{p-1} \le 2^\frac{2-p}{p-1}\big(a^\frac{1}{p-1}+b^\frac{1}{p-1}\big).
\eeq
Indeed, let $q=1/(p-1)>1$, then by convexity of $t\mapsto t^q$ in $[0,\infty)$ we have
\[(a+b)^q = 2^q\Big(\frac{a}{2}+\frac{b}{2}\Big)^q \le 2^{q-1}(a^q+b^q),\]
hence \eqref{ei4}.
\item For all $a,b,c\in\R$, $b\le c$ we have
\beq\label{ei5}
(c-a)^{p-1}-(b-a)^{p-1} \le 2^{2-p}(c-b)^{p-1}.
\eeq
Indeed, by the rearrangement inequality for integrals we have
\[\int_{a-c}^{a-b}|t|^{p-2}\,dt \le \int_{(b-c)/2}^{(c-b)/2}|t|^{p-2}\,dt,\]
which rephrases as
\begin{align*}
\frac{(a-b)^{p-1}}{p-1}-\frac{(a-c)^{p-1}}{p-1} &\le \frac{1}{p-1}\Big[\Big(\frac{c-b}{2}\Big)^{p-1}-\Big(\frac{b-c}{2}\Big)^{p-1}\Big] \\
&= \frac{2^{2-p}}{p-1}(c-b)^{p-1},
\end{align*}
hence \eqref{ei5}.
\end{itemize}

\section{Proof of Proposition \ref{bar}}\label{appb}

\noindent
Here we give a sketch of the proof of Proposition \ref{bar}. In fact, the original argument of \cite[Lemma 3.4]{IMS1} only works for domains with a $C^{2,1}$-smooth boundary, as it requires that the metric projection onto the boundary be $C^{1,1}$ (see \cite{LS}). We modify the argument in order to include $C^{1,1}$-smooth domains, and so we correct the gap of the original proof.
\vskip4pt
\noindent
{\em Proof of Proposition \ref{bar}.}
The idea is the following: first, we rephrase $v_\lambda$ by means of a convenient diffeomorphism and a distance function; then, we prove the desired bound on $\fpl v_\lambda$ as in \cite[Lemma 3.4]{IMS1}.
\vskip2pt
\noindent
First we introduce a signed distance from $\partial U$ by setting for all $x\in\R^N$
\[{\rm d}(x) = \begin{cases}
{\rm d}_U(x) & \text{if $x\in U$} \\
-{\rm d}_{U^c}(x) & \text{if $x\in U^c$.}
\end{cases}\]
By the regularity of $\partial U$, ${\rm d}$ is a $C^{1,1}$-function in a convenient tubular neighborhood of $\partial U$, satisfying $|\nabla{\rm d}|=1$. Up to a rotation of axes, we may assume that $e_N$ is the interior normal to $\partial U$ at $0$, so $\nabla{\rm d}(0)=e_N$. By the regularity of ${\rm d}$ there exist $R_0,C>0$ only depending on $U$ (which will be tacitly assumed henceforth), s.t.\ for all $x\in B_{2R_0}$
\beq\label{bar1}
|\nabla{\rm d}(x)-e_N| \le C|x|, \qquad |{\rm d}(x)-x_N| \le C|x|^2.
\eeq
Also, there exists a $C^{1,1}$-diffeomorphism $\Theta:B_{2R_0}\to\Theta(B_{2R_0})$ (meaning that both $\Theta$ and $\Theta^{-1}$ are $C^{1,1}$) with the following properties: $\Theta(0)=0$, $D\Theta(0)=D\Theta^{-1}(0)={\rm Id}$, and
\[\Theta(B_{2R_0}\cap U) \subset \R^N_+, \qquad \Theta(B_{2R_0}\cap\partial U) \subset \partial\R^N_+,\]
where we have set
\[\R^N_+ = \big\{x'\in\R^N:\,x'_N>0\big\}.\]
Besides, we have
\[\|\Theta\|_{C^{1,1}(B_{2R_0})}+\|\Theta^{-1}\|_{C^{1,1}(\Theta(B_{2R_0}))} \le C,\]
and for all $x\in B_{2R_0}$, $x'\in\Theta(B_{2R_0})$
\[|D\Theta(x)-{\rm Id}|+|D\Theta^{-1}(x')-{\rm Id}| \le CR_0, \qquad |\Theta(x)-x|+|\Theta^{-1}(x')-x'| \le CR_0^2.\]
Set for all $x'\in\Theta(B_{2R_0})$
\[{\rm d}'(x') = {\rm d}(\Theta^{-1}(x')).\]
By \eqref{bar1} and the bounds on $\Theta$, ${\rm d}'\in C^{1,1}(\Theta(B_{2R_0}))$ and for all $x'\in\Theta(B_{2R_0})$ we have
\beq\label{bar2}
|\nabla{\rm d}'(x')-e_N| \le CR_0, \qquad |{\rm d}'(x')-x'_N| \le CR_0^2.
\eeq
Fix $\lambda_0\in(0,1/2)$ (to be determined later). Then set for all $|\lambda|\le\lambda_0$, $R\in (0, R_0/2]$, and $x'\in\Theta(B_{2R_0})$
\[\psi_\lambda(x') = \Big(1+\lambda\varphi\Big(\frac{\Theta^{-1}(x')}{R}\Big)\Big)^\frac{1}{s}.\]
So $\psi_\lambda\in C^{1,1}(\Theta(B_{2R_0}))$ satisfies for all $x'\in\Theta(B_{2R_0})$
\[|\psi_\lambda(x')-1| \le C|\lambda|\chi_{\Theta(B_R)}(x'), \qquad |\nabla\psi_\lambda(x')| \le C\frac{|\lambda|}{R}\chi_{\Theta(B_R)}(x'),\]
and almost everywhere
\[|D^2\psi_\lambda(x')| \le C\frac{\lambda^2}{R^2}\chi_{\Theta(B_R)}(x').\]
Next we define another local diffeomorphism. Set for all $x'\in\Theta(B_{2R_0})$
\[\Psi_\lambda(x') = \big(x'_1,\ldots x'_{N-1},\psi_\lambda(x'){\rm d}'(x')\big),\]
so that $\Psi_\lambda:\Theta(B_{2R_0})\to\R^N$ is a $C^{1,1}$-map s.t.\ $\Psi_\lambda(\Theta(B_{2R_0})\cap\partial\R^N_+)\subset\partial\R^N_+$. We compute the first-order derivatives of $\Psi_\lambda$ at $x'\in\Theta(B_{2R_0})$. Clearly, for all $j\in\{1,\ldots N-1\}$
\[\nabla\Psi_\lambda^j(x') = e_j.\]
For the $N$-th component, noting that $|{\rm d}'|\le CR$ in $\Theta(B_R)$, using \eqref{bar2} and the estimates on $\psi_\lambda$ we have
\begin{align*}
|\nabla\Psi_\lambda^N(x')-e_N| &\le |\nabla\psi_\lambda(x')||{\rm d}'(x')|+|\psi_\lambda(x')-1||\nabla{\rm d'}(x')|+|\nabla{\rm d}'(x')-e_N| \\
&\le C\frac{|\lambda|}{R}|{\rm d}'(x')|\chi_{\Theta(B_R)}(x')+C|\lambda|\chi_{\Theta(B_R)}(x')+CR_0 \\
&\le C|\lambda|\chi_{\Theta(B_R)}(x')+CR_0.
\end{align*}
Taking $\lambda_0$, $R_0$ even smaller if necessary (still depending on $U$), we may assume that $\|D\Psi_\lambda-{\rm Id}\|_{L^\infty(\Theta(B_{2R_0}))}$ is sufficiently small, so that $\Psi_\lambda$ is a $C^{1,1}$-diffeomorphism. A similar argument leads for a.e.\ $x'\in\Theta(B_{2R_0})$ to the following estimate of the second-order derivatives:
\begin{align*}
|D^2\Psi_\lambda(x')| &\le C\Big[\frac{\lambda^2}{R^2}|{\rm d}'(x')|+\frac{|\lambda|}{R}+|\lambda|\Big]\chi_{\Theta(B_R)}(x')+ C \\
&\le C\frac{|\lambda|}{R}\chi_{\Theta(B_R)}(x')+C.
\end{align*}
Now set for all $x\in B_{2R_0}$
\[\Phi_\lambda(x) = \Psi_\lambda(\Theta(x)).\]
By the previous estimates $\Phi_\lambda:B_{2R_0}\to\Phi_\lambda(B_{2R_0})$ is a $C^{1,1}$-diffeomorphism s.t.\ for all $x\in B_{2R_0}$, $\tilde x\in\Phi_\lambda(B_{2R_0})$
\beq\label{bar3}
|D\Phi_\lambda(x')-{\rm Id}|+|D\Phi_\lambda^{-1}(\tilde x)-{\rm Id}| \le C(\lambda_0+R_0),
\eeq
and for a.e.\ $x\in B_{2R_0}$
\beq\label{bar4}
|D^2\Phi_\lambda(x)| \le C\frac{|\lambda|}{R}\chi_{B_R}(x)+C.
\eeq
Also, for all $x\in B_{2R_0}$ a direct computation yields
\beq\label{bar5}
v_\lambda(x) = (\Phi_\lambda^N)_+^s(x).
\eeq
We aim at extending $\Phi_\lambda$ to a {\em global} diffeomorphism, keeping uniform estimates with respect to $\lambda$, $R$. Note that for all $|\lambda|\le\lambda_0$ we have $\Phi_\lambda=\Phi_0$ in $B_{2R_0}\setminus B_{R_0/2}$, while by \eqref{bar1} we have for all $x\in B_{2R_0}\setminus B_{R_0/2}$
\[|\Phi_0(x)-x| \le CR_0^2.\]
So we pick a cut-off function $\eta\in C^\infty_c(B_{2R_0})$ s.t.\ $0\le\eta\le 1$ in $\R^N$, $\eta=1$ in $B_{R_0}$, and 
\[R_0|\nabla\eta|+R_0^2|D^2\eta|\le C(N)\]
in $B_{2R_0}\setminus B_{R_0}$. Then, set for all $x\in\R^N$
\[\hat\Phi_\lambda(x) = \eta(x)\Phi_\lambda(x)+(1-\eta(x))x.\]
Clearly we have $\hat\Phi_\lambda(x)=\Phi_\lambda(x)$ for all $x\in B_{R_0}$, as well as $\hat\Phi_\lambda(x)=x$ for all $x\in B_{2R_0}^c$. Also, for all $x\in B_{2R_0}\setminus B_{R_0/2}$ the previous relations and \eqref{bar3} imply
\begin{align*}
|D\hat\Phi_\lambda(x)-{\rm Id}| &\le |\nabla\eta(x)||\Phi_0(x)-x|+\eta(x)|D\Phi_\lambda(x)-{\rm Id}| \\
&\le \frac{C}{R_0}R_0^2+C(\lambda_0+R_0) \le C(\lambda_0+R_0).
\end{align*}
Moreover, a similar estimate holds for $D\hat\Phi_\lambda^{-1}$ and $D^2\hat\Phi_\lambda$ satisfies \eqref{bar4} a.\,e.. Adjusting $\lambda_0$, $R_0$ again, and setting for simplicity $\hat\Phi_\lambda=\Phi_\lambda$, we deduce that $\Phi_\lambda:\R^N\to\R^N$ is a $C^{1,1}$-diffeomorphism satisfying \eqref{bar3} \eqref{bar4} in $\R^N$, and \eqref{bar5} in $B_{R_0}$, which concludes the first part of our argument.
\vskip2pt
\noindent
Now we aim at proving the bound on $\fpl v_\lambda$ in $D_{R_0/2}=B_{R_0/2}\cap U$, by means of \eqref{bar5} and the properties of $\Phi_\lambda$. Recall that $|\lambda|\le\lambda_0$, $R\in(0,R_0/2]$. Set for all $\eps\in(0,1)$, $x\in B_{R_0/2}\cap U$
\[f_\eps(x) = \int_{\{|\Phi_\lambda(x)-\Phi_\lambda(y)|\ge\eps\}}\frac{(v_\lambda(x)-v_\lambda(y))^{p-1}}{|x-y|^{N+ps}}\,dy\]
(we omit henceforth the dependance on $\lambda$, $R$ for brevity). We claim that for any such $\lambda$ and $R$ there exists $f_0\in L^\infty(D_{R_0/2})$ s.t.
\beq
\label{bclaim}
f_\eps\to f_0\quad \text{in $L^1_{\rm loc}(D_{R_0/2})$ as $\eps\to 0^+$ and}\quad 
\|f_0\|_{L^\infty(D_{R_0/2})} \le C\Big(1+\frac{|\lambda|}{R^s}\Big).
\eeq
First note that it suffices to prove the claim  for
\[\tilde f_\eps(x) = \int_{B_{R_0}\cap\{|\Phi_\lambda(x)-\Phi_\lambda(x)|\ge\eps\}}\frac{(v_\lambda(x)-v_\lambda(y))^{p-1}}{|x-y|^{N+ps}}\,dy.\]
Indeed, for all $\eps\in(0,1)$, $x\in D_{R_0/2}$, $y\in B_{R_0}^c$ we have $|x-y|\ge|y|/2$, hence
\begin{align*}
|f_\eps(x)-\tilde f_\eps(x)| &\le \int_{B_{R_0}^c}\frac{|v_\lambda(x)-v_\lambda(y)|^{p-1}}{|x-y|^{N+ps}}\,dy \\
&\le C\, {\rm diam}(U)^{(p-1)s}\int_{B_{R_0}^c}\frac{dy}{|y|^{N+ps}} \le C,
\end{align*}
for a suitable $C=C(N,p,s,U, \varphi)>0$ (such dependance of $C$ will be assumed henceforth). So, $f_\eps-\tilde f_\eps$ converges in $L^\infty(D_{R_0/2})$ to some universally bounded limit function, i.e., with a bound on the $L^\infty$-norm independent of $\lambda$, $R$. 
\vskip2pt
\noindent
Focusing on $\tilde f_\eps$, we use the change of variables $\tilde x=\Phi_\lambda(x)$, $\tilde y=\Phi_\lambda(y)$ and recall \eqref{bar5} to get 
\[\tilde f_\eps(x) = \int_{\Phi_\lambda(B_{R_0})\cap B_\eps^c(\tilde x)}\frac{((\tilde x_N)_+^s-(\tilde y_N)_+^s)^{p-1}}{|\Phi_\lambda^{-1}(\tilde x)-\Phi_\lambda^{-1}(\tilde y)|^{N+ps}}|{\rm det}\,D\Phi_\lambda^{-1}(\tilde y)|\,d\tilde y\]
for all $x\in D_{R_0/2}$.
We split the integral by setting for all $\tilde x\in\Phi_\lambda(D_{R_0/2})$, $\tilde y\in\Phi_\lambda(B_{R_0})$
\[g_\eps(\tilde x) = \int_{B_\eps^c(\tilde x)}\frac{((\tilde x_N)_+^s-(\tilde y_N)_+^s)^{p-1}}{|D\Phi_\lambda^{-1}(\tilde x)(\tilde x-\tilde y)|^{N+ps}}\,d\tilde y,\]
\[h_\eps(\tilde x) = \int_{\Phi_\lambda^c(B_{R_0})\cap B_\eps^c(\tilde x)}\frac{((\tilde x_N)_+^s-(\tilde y_N)_+^s)^{p-1}}{|D\Phi_\lambda^{-1}(\tilde x)(\tilde x-\tilde y)|^{N+ps}}\,d\tilde y,\]
and
\[H(\tilde x,\tilde y) = |{\rm det}\,D\Phi_\lambda^{-1}(\tilde y)|-|{\rm det}\,D\Phi_\lambda^{-1}(\tilde x)|\frac{|\Phi_\lambda^{-1}(\tilde x)-\Phi_\lambda^{-1}(\tilde y)|^{N+ps}}{|D\Phi_\lambda^{-1}(\tilde x)(\tilde x-\tilde y)|^{N+ps}}.\]
Indeed, a direct computation shows for all $x\in D_{R_0/2}$, $\tilde x=\Phi_\lambda(x)$
\[\tilde f_\eps(x) = |{\rm det}\,D\Phi_\lambda^{-1}(\tilde x)|(g_\eps(\tilde x)-h_\eps(\tilde x))+\int_{\Phi_\lambda(B_{R_0})\cap B_\eps^c(\tilde x)}\frac{((\tilde x_N)_+^s-(\tilde y_N)_+^s)^{p-1}}{|\Phi_\lambda^{-1}(\tilde x)-\Phi_\lambda^{-1}(\tilde y)|^{N+ps}}H(\tilde x,\tilde y)\,d\tilde y.\]
By \cite[Lemma 3.2]{IMS} we have $g_\eps\to 0$ in $L^\infty_{\rm loc}(\R^N_+)$ as $\eps\to 0^+$. Also, a similar argument to that used above for $f_\eps-\tilde f_\eps$ shows that $h_\eps$ converges in $L^\infty(\Phi_\lambda(D_{R_0/2}))$ to a function with a universal $L^\infty$-bound.
\vskip2pt
\noindent
So we turn to the last quantity. We claim that there exists $C=C(N,p,s,U, \varphi)>0$ s.t.\ for all $|\lambda|\le\lambda_0$, $R\in(0,R_0/2]$ we have for all $\tilde x\in\Phi_\lambda(D_{R_0/2})$
\beq\label{bar6}
\int_{\Phi_\lambda(B_{R_0})}\frac{|(\tilde x_N)_+^s-(\tilde y_N)_+^s|^{p-1}}{|\Phi_\lambda^{-1}(\tilde x)-\Phi_\lambda^{-1}(\tilde y)|^{N+ps}}|H(\tilde x,\tilde y)|\,d\tilde y \le C\Big(1+\frac{|\lambda|}{R^s}\Big).
\eeq
We go back to the original variables using \eqref{bar5}:
\[\int_{\Phi_\lambda(B_{R_0})}\frac{|(\tilde x_N)_+^s-(\tilde y_N)_+^s|^{p-1}}{|\Phi_\lambda^{-1}(\tilde x)-\Phi_\lambda^{-1}(\tilde y)|^{N+ps}}|H(\tilde x,\tilde y)|\,d\tilde y = \int_{B_{R_0}}\frac{|v_\lambda(x)-v_\lambda(y)|^{p-1}}{|x-y|^{N+ps}}|K(x,y)|\,dy,\]
where we have set for all $x\in D_{R_0/2}$, $y\in\R^N$
\[K(x,y) = 1-\frac{|{\rm det}\,D\Phi_\lambda(y)|}{|{\rm det}\,D\Phi_\lambda(x)|}\,\frac{|x-y|^{N+ps}}{|D\Phi_\lambda(x)^{-1}(\Phi_\lambda(x)-\Phi_\lambda(y))|^{N+ps}}.\]
We estimate $K$ arguing as in \cite[Lemma 3.4]{IMS1}. First we split and use \eqref{bar3}:
\begin{align*}
|K(x,y)| &\le \left|\frac{|{\rm det}\,D\Phi_\lambda(x)|-|{\rm det}\,D\Phi_\lambda(y)|}{|{\rm det}\,D\Phi_\lambda(x)|} \right|\\
& \ + \frac{|{\rm det}\,D\Phi_\lambda(y)|}{|{\rm det}\,D\Phi_\lambda(x)|}\left|1-\frac{|x-y|^{N+ps}}{|D\Phi_\lambda(x)^{-1}(\Phi_\lambda(x)-\Phi_\lambda(y))|^{N+ps}}\right| \\
&\le C\left|{\rm det}\,D\Phi_\lambda(x)-{\rm det}\,D\Phi_\lambda(y)\right|+C\left|1-\frac{|x-y|^{N+ps}}{|D\Phi_\lambda(x)^{-1}(\Phi_\lambda(x)-\Phi_\lambda(y))|^{N+ps}}\right| \\
&= K_1(x,y)+K_2(x,y).
\end{align*}
Focusing on   $K_1$, we use \eqref{bar3} and \eqref{bar4} to get
\begin{align*}
K_1(x,y) &\le C\left|\int_0^1\frac{d}{dt}{\rm det}\,D\Phi_\lambda(x+t(y-x))\,dt\right| \\
&\le C\int_0^1|D^2\Phi_\lambda(x+t(y-x))||x-y|\,dt \\
&\le C\, |x-y|\int_0^1\Big[\frac{|\lambda|}{R}\chi_{B_R}(x+t(y-x))+1\Big]\,dt \\
&\le C\, |\lambda|\min\Big\{\frac{|x-y|}{R},\,1\Big\}+C\,|x-y|.
\end{align*}
Considering now $K_2$, using Taylor's expansion with integral remainder, \eqref{bar3} and \eqref{bar4} we obtain a formally analogous estimate:
\begin{align*}
K_2(x,y) &\le C\left|1-\frac{|x-y|^2}{|D\Phi_\lambda(x)^{-1}(\Phi_\lambda(x)-\Phi_\lambda(y))|^2}\right| \\
& \le C\,\frac{\big|D\Phi_\lambda(x)^{-1}(\Phi_\lambda(x)-\Phi_\lambda(y))+(x-y)\big|\,\big|D\Phi_\lambda(x)^{-1}(\Phi_\lambda(x)-\Phi_\lambda(y))-(x-y)\big|}{|D\Phi_\lambda(x)^{-1}(\Phi_\lambda(x)-\Phi_\lambda(y))|^2} \\
&\le C\,\frac{|\Phi_\lambda(x)-\Phi_\lambda(y)|+|D\Phi_\lambda(x)(x-y)|}{|\Phi_\lambda(x)-\Phi_\lambda(y)|^2}\, \big|\Phi_\lambda(x)-\Phi_\lambda(y)-D\Phi_\lambda(x)(x-y)| \\
&\le \frac{C}{|x-y|}\left|\int_0^1(1-t)\frac{d^2}{dt^2}\Phi_\lambda(x+t(y-x))\,dt\right| \\
&\le \frac{C}{|x-y|}\int_0^1\Big[\frac{|\lambda|}{R}\chi_{B_R}(x+t(y-x))+1\Big]|x-y|^2\,dt \\
&\le C\, |\lambda|\min\Big\{\frac{|x-y|}{R},\,1\Big\}+C\, |x-y|.
\end{align*}
Therefore, for all $x\in D_{R_0/2}$, $y\in\R^N$ we have
\[|K(x,y)| \le C\, |\lambda|\min\Big\{\frac{|x-y|}{R},\,1\Big\}+C\, |x-y|,\]
with $C=C(N,p,s,U, \varphi)>0$. Moreover, from \eqref{bar3} we infer  a Lipschitz bound on $\Phi_\lambda$ independent on $\lambda$ and $R$ as long as $|\lambda|\le \lambda_0$ and $R\in(0,R_0/2]$, hence \eqref{bar5} shows that $v_\lambda$ has $s$-H\"older modulus of continuity  in $B_{R_0}$ uniformly bounded in $\lambda$ and $R$ under these conditions. Therefore
\[
\frac{|v_\lambda(x)-v_\lambda(y)|^{p-1}}{|x-y|^{N+ps}}\le C\frac{|x-y|^{s(p-1)}}{|x-y|^{N+ps}}=\frac{C}{|x-y|^{N+s}}.
\]
 Plugging these estimates into the integral we have
\begin{align*}
\int_{B_{R_0}}&\frac{|v_\lambda(x)-v_\lambda(y)|^{p-1}}{|x-y|^{N+ps}}|K(x,y)|\,dy \\
&\le \int_{B_{R_0}}\frac{C}{|x-y|^{N+s}}\Big[|\lambda|\min\Big\{\frac{|x-y|}{R},\,1\Big\}+|x-y|\Big]\,dy \\
&\le C\, \frac{|\lambda|}{R}\int_{B_{R_0}}\frac{\min\{|x-y|,\,R\}}{|x-y|^{N+s}}\,dy+C\int_{B_{R_0}}\frac{dy}{|x-y|^{N+s-1}} \\
&\le C\, \frac{|\lambda|}{R}\int_{B_R(x)}\frac{dy}{|x-y|^{N+s-1}}+C\, |\lambda|\int_{B_{R_0}\cap B_R^c(x)}\frac{dy}{|x-y|^{N+s}}+C \\
&\le C\, \frac{|\lambda|}{R^s}+C\, |\lambda|\int_{B_R^c(x)}\frac{dy}{|x-y|^{N+s}}+C \le C\frac{|\lambda|}{R^s}+C.
\end{align*}
Thus we have proved \eqref{bar6}. In turn, that implies \eqref{bclaim} (the stated convergence is actually in $L^\infty_{\rm loc}(D_{R_0/2})$ as $\eps\to 0^+$). 
\vskip2pt
\noindent
Finally, \cite[Lemma 2.5]{IMS} can be applied to $v_\lambda$ in $D_{R_0/2}$ with the sets
\[A_\eps=\left\{(x, y)\in \R^N\times \R^N: |\Phi_\lambda(x)-\Phi_\lambda(y)|<\eps\right\}\]
thanks to  the global Lipschitzianity of $\Phi_\lambda^{-1}$ granted by \eqref{bar3}. Therefore $\fpl v_\lambda=f_0$ weakly in $D_{R_0/2}$ and the   bound on $f_0$ in \eqref{bclaim} concludes the proof of the Proposition with $\rho'_U=R_0/2$ and $\lambda_0$ as before. 
\qed
\vskip4pt
\noindent
{\bf Acknowledgement.} Both authors are members of GNAMPA (Gruppo Nazionale per l'Analisi Matematica, la Probabilit\`a e le loro Applicazioni) of INdAM (Istituto Nazionale di Alta Matematica 'Francesco Severi') and are supported by GNAMPA project {\em Problemi non locali di tipo stazionario ed evolutivo} (CUP E53C23001670001). S.M.\ is partially supported by the projects PIACERI linea 2 and linea 3 of the University of Catania, GNAMPA project {\em Problemi ellittici e parabolici con termini di reazione singolari e convettivi} and PRIN project 2022ZXZTN2. This work was partially accomplished during S.\ Mosconi's visit to the University of Cagliari, funded by the research project {\em Analysis of PDE's in connection with real phenomena} (CUP F73C22001130007, Fondazione di Sardegna 2021). The authors express their gratitude to E.\ Lindgren for bringing to their attention the interesting reference \cite{GL}.

\end{document}